\definecolor{blue}{RGB}{0,0,255}
\definecolor{green}{RGB}{50,150,50}
\definecolor{red}{RGB}{255,0,0}
\newtheorem{theorem}{Theorem}[section]
  \newtheorem{lemma}[theorem]{Lemma}
   \newtheorem{corollary}[theorem]{Corollary}
  \newtheorem{proposition}[theorem]{Proposition}
  \newtheorem{definition}[theorem]{Definition}
  \theoremstyle{definition}
  \newtheorem{example}[theorem]{Example}
   \newtheorem{remark}[theorem]{Remark}
\newcommand{\CC}{\mathbb C}
\newcommand{\RR}{\mathbb R}
\newcommand{\QQ}{\mathbb Q}
\newcommand{\ZZ}{\mathbb Z}
\newcommand{\NN}{\mathbb N}
\newcommand{\KK}{\mathbb K}
\newcommand{\cT}{\mathcal T}
\newcommand{\kk}{\mathbf k}
\newcommand{\stint}{\cdot}
\newcommand{\minksum}{+}
\newcommand{\cyclesum}{\oplus}
\newcommand{\refl}[1]{-#1} %{#1^{\text{refl}}}
\DeclareMathOperator{\supp}{supp}
\DeclareMathOperator{\vol}{vol}
\DeclareMathOperator{\init}{in}
\DeclareMathOperator{\codim}{codim}
\DeclareMathOperator{\link}{link}
\DeclareMathOperator{\Ass}{Ass}
\DeclareMathOperator{\Hom}{Hom}
\DeclareMathOperator{\MV}{MV}
\DeclareMathOperator{\Span}{span}
\DeclareMathOperator{\Gfan}{Gfan}
\newcommand{\mult}[2]{\operatorname{mult}_{#2}({#1})}
\date{\today}
 \title[]{Stable Intersections of Tropical Varieties}
 \author{Anders Jensen}
 \author{Josephine Yu}
 \address{Institut for Matematik, Aarhus Universitet, Aarhus, Denmark}
\email{jensen@math.au.dk}
\address{ School of Mathematics, Georgia Institute of Technology,
        Atlanta GA, USA}
\email {jyu@math.gatech.edu}
\date{\today}
\begin{document}

 \begin{abstract}
We give several characterizations of stable intersections of tropical cycles and establish their fundamental properties.  We prove that the stable intersection of two tropical varieties is the tropicalization of the intersection of the classical varieties after a generic rescaling. 
% Given $n$ polytopes in~$\RR^n$ the coefficients of their volume polynomial appear as multiplicities of the stable intersections of their tropical hypersurfaces. In particular we recover the volumes and mixed volumes of the polytopes. 
% Using our characterization of stable intersections through generic scaling 
A proof of Bernstein's theorem follows from this.  We prove that the
tropical intersection ring of tropical cycle fans is isomorphic to
McMullen's polytope algebra.  It follows that every tropical cycle fan
is a linear combination of pure powers of tropical hypersurfaces,
which are always realizable.  We prove that every stable intersection of constant coefficient tropical varieties defined by prime ideals is connected through codimension one.  We also give an example of a realizable tropical variety that is connected through codimension one but whose stable intersection with a hyperplane is not.
 \end{abstract}

 \maketitle

\section{Introduction}

We consider stable intersections of tropical cycles in $\RR^n$, which are pure weighted balanced rational polyhedral complexes.  Stable intersection is an important and useful concept that has been studied by many authors.  It first appeared under a different guise in \cite{FultonSturmfels} where Fulton and Sturmfels performed multiplication of Chow cohomology classes on a complete toric variety using a procedure called the {\em fan displacement rule}.  The same procedure was used in \cite{RGST, Mik} to define stable intersection of tropical varieties and cycles.  Allermann and Rau gave a different definition of the intersection product of tropical cycles using tropical Cartier divisors \cite{AllermannRau}.  Katz and Rau then showed independently that these two notions of intersection products coincide \cite{KatzIntersection, Rau}.  Kazarnovskii independently developed a theory of tropical varieties and their stable intersections using a different language~\cite{Kazarnovskii}.

Our interest in stable intersection originated from our previous work on computation of tropical resultants~\cite{tropRes}.  For resultants of polynomial systems, the most ubiquitous cases, including the cases for implicitization, are where some of the coefficients are {\em specialized} to generic constants while we study the choices of remaining coefficients that make the system solvable. When the resultant is defined by a single polynomial, specialization corresponds to projection of the Newton polytope.  In general, for tropical varieties, which can be considered as generalizations of Newton polytopes, specialization corresponds to stable intersection with a coordinate subspace. We were dissatisfied with the lack of efficient algorithms for computing stable intersections as well as the lack of elementary proofs for basic results about stable intersections, especially those relating to perturbation of ideals, that would be accessible to a wider computational geometry community.
\smallskip

 The contributions of the present paper include:
\begin{itemize}
\item A new characterization of stable intersection that enables us to do computations efficiently.  This has been implemented in Gfan~\cite{gfan} and also as a Polymake extension by Hampe~\cite{Hampe}.
\item Self-contained combinatorial proofs of fundamental properties of stable intersection, including well-definedness, dimension formula, balancing condition, and associativity.
\item An elementary proof that the stable intersection of tropical varieties is the tropicalization of the intersection of the varieties after a generic rescaling.
\item A proof that the tropical intersection ring is isomorphic to McMullen's polytope algebra.
\item A proof that the stable intersection of irreducible tropical varieties is connected through codimension one, answering an open question of Cartwright and Payne from~\cite{CartwrightPayne}.
\item An example showing that stable intersection does not preserve connectivity through codimension one, even for realizable tropical varieties.
\end{itemize}
 
We give a new definition of stable intersection (Definition~\ref{def:stableIntersection}) and show that it is equivalent to both the fan displacement rule (Proposition~\ref{prop:limit}) and the Allermann--Rau intersection (Proposition~\ref{prop:AR}).  Our definition is preferable for computations since it does not use limits as in the fan displacement rule and does not require performing intersections iteratively on the Cartesian product as the Allermann--Rau definition does.  In Section~\ref{sec:basics} and in the Appendix we give detailed and careful proofs of fundamental results about stable intersections without relying on algebraic geometry results.

In Section~\ref{sec:ideals} we show that the stable intersection of tropical varieties is the tropicalization of the intersection of varieties after a generic scaling of variables.  This was proven by Osserman and Payne \cite[Proposition 2.7.8]{OssermanPayne}; however our methods are different and  elementary.  From this we derive Bernstein's Theorem in Section~\ref{sec:Bernstein}.  

We show in Section~\ref{sec:PolytopeAlgebra} that the ring of tropical
cycles with the stable intersection product is isomorphic to
McMullen's polytope algebra, based on a result by Fulton and
Sturmfels.  From this and McMullen's work on polytope algebra, it
follows that every tropical cycle is a linear combination of pure
powers of tropical hypersurfaces.  In particular, it follows that
every tropical cycle is a linear combination of realizable tropical varieties.

Tropical varieties of prime ideals are known to be connected through codimension one. We prove that stable intersections of such tropical varieties are also connected through codimension one (Theorem~\ref{thm:connected}).  For arbitrary (non-irreducible) tropical cycles, even for realizable ones, the stable intersection does not preserve connectivity through codimension one (Example~\ref{ex:disconnected}).

\medskip

\noindent
{\bf Notations and conventions.} We use the {\em max} convention in tropical geometry.

Our tropical cycles are polyhedral complexes in Section~\ref{sec:basics}, and we assume that they are fans in Sections~\ref{sec:ideals}, \ref{sec:Bernstein}, \ref{sec:PolytopeAlgebra}, and~\ref{sec:connectivity}.

In many situations throughout the paper we use the notion of a \emph{generic} point $p$ in a set $S$. By this we mean that $p\in S$ is chosen in an implicitly given relatively open dense subset of $S$. A typical situation is when $S$ is the support of a polyhedral complex and generic points are those in the relative interior of facets.

\medskip
\noindent
{\bf Acknowledgments.}  We thank Diane Maclagan for reading and providing feedback on an earlier draft and the referees for many helpful suggestions that greatly improved the exposition. The first author was supported by the Danish Council for Independent Research, Natural Sciences (FNU), and the second author was supported by the NSF grant DMS \#1101289.

 \section{Definitions and Basic Properties}
\label{sec:basics}

Let $N$ be a lattice, $N_\QQ = N \otimes_\ZZ \QQ$ and $N_\RR = N \otimes_\ZZ \RR$.  We may sometimes refer to $N_\RR$ as $\RR^n$ where $n$ is the dimension.

A tropical $k$-cycle in $N_\RR$ is a pure $k$-dimensional weighted balanced rational polyhedral complex as we now explain.  A polyhedral complex $X$ is called {\em weighted} if every facet $\sigma$ is assigned a number $\mult{\sigma}{X}$ which we call its multiplicity or weight. The multiplicity is usually an integer, but we also allow rational multiplicities in Section~\ref{sec:Bernstein} and~\ref{sec:PolytopeAlgebra}.  If a point $x$ is in the relative interior of a facet $\sigma$, then let $\mult{x}{X} := \mult{\sigma}{X}$.  For a face $\sigma \in X$, let $N_{\sigma}$ denote the maximal sublattice of $N$ parallel to the affine span of $\sigma$, and for any relative interior point $\omega \in \sigma$, let $N_\omega := N_\sigma$. 

The {\em support} $\supp(X)$ of a tropical cycle $X$ is the union of its closed facets with non-zero multiplicities. When it leads to no confusion we will sometimes repress the ``supp'' notation. If $\supp(X)=N_\RR$ then $X$ is called \emph{complete}. The {\em link} of a polyhedron $\sigma \subseteq \RR^m$ at a point $v \in \sigma$ is the polyhedral cone
$$
\link_v(\sigma) = \{u \in \RR^m \,|\, \exists \delta > 0 :  \forall \varepsilon \text{ between } 0 \text{ and } \delta : v + \varepsilon u \in \sigma \}.
$$
The {\em link} of a tropical cycle $X$ at a point $v\in\supp(X)$ is the tropical cycle
$$
\link_v(X) = \{ \link_v(\sigma) \,|\, v \in \sigma \in X \}
$$
with inherited multiplicities. This is always a fan.
We also use the notation $\link_\tau(\sigma)$ and $\link_\tau(X)$ to denote links with respect to relative interior points of $\tau$.  The {\em lineality space} of a polyhedron $P$ is the largest affine subspace of $P$, translated to the origin. All cones in a given fan have the same lineality space, which we call the lineality space of the fan.  The lineality space of $\link_\tau(X)$ contains the linear span of $\tau$.

A weighted rational polyhedral complex $X$ is called {\em balanced} if for any ridge (codim-$1$ face) $\tau$ of $X$,
$$
\sum_{\sigma \supset \tau} \mult{\sigma}{X}\cdot v_{\sigma/\tau}  \in \Span_\QQ(N_\tau)
$$
where $\sigma$ runs over facets of $X$ containing $\tau$ and $v_{\sigma/\tau} \in N\cap \link_\tau(\sigma)$ is a lattice element generating $N_\sigma$ together with $N_\tau$. This completes the definition of tropical cycles.

The polyhedral complex structure is disregarded as follows. For a tropical cycle $X$ and a complete complex $Y$, the \emph{common refinement} $X\wedge Y:=\{\sigma\cap\tau:\sigma\in X ,\tau\in Y \}$ inherits the multiplicities of $X$. Two cycles $X$ and $Y$ are identified if there exists a complete complex $Z$ such that $X \wedge Z=Y\wedge Z$ with multiplicity.  Moreover we ignore facets with multiplicity $0$.

In following we will consider images of tropical cycles under linear maps. For simplicity we assume that all polyhedra in a given cycle have the same lineality space. This will not be a restriction later as we are mainly interested in fans. Let $X$ be a tropical cycle in $N_\RR$ and $A : N \rightarrow N'$ be a linear map, inducing a map $A : N_\RR \rightarrow N'_\RR$.  Suppose that for a dense open subset of the image $A(X)$, the preimage of each point consists only of points in the relative interiors of facets in $X$.  In other words, modulo a subspace of the lineality space of $X$, the map $A$ is generically finite-to-one on $X$.  For the rest of the paper, all linear maps between tropical cycles that we consider will satisfy this property.

We can then define multiplicities on the image $A(X)$ as follows.  
First endow $A(X)$ with a polyhedral structure such that the image of each face of $X$ is a  union of faces of $A(X)$.  For any point $\omega \in A(X)$ lying in the relative interior of a facet, let
\begin{equation}
\label{eqn:ST}
\mult{\omega}{A(X)} = \sum_{v} \mult{v}{X} \cdot [N'_\omega : A N_v ],
\end{equation}
where the sum runs over one $v$ for each facet of $X$ meeting the preimage of $\omega$.

If $X$ is the tropical variety of an ideal $I$ in the sense of Section~\ref{sec:ideals} and $A$ is the tropicalization of a map $\alpha$ of tori, we have the relation $A(X)=\delta\cT(\alpha(V(I)))$, where $\delta$ is the degree of $\alpha$ on $V(I)$. This can be seen from the Sturmfels--Tevelev projection formula for the generically finite-to-one case~\cite{SturmfelsTevelev} and its generalization by Cueto--Tobis--Yu~\cite[Theorem~3.4]{CTY}. When $A(X)$ is the entire ambient space, $\cT(\alpha(V(I)))$ has multiplicity one everywhere and $\delta$ is the multiplicity of $A(X)$. However, in general we cannot recover $\delta$ tropically from $A$ and $\cT(I)$ as the following example shows.

\begin{example}
\label{ex:ST}
Let $I=\langle x_1+x_2+x_3+1,(x_3-2)(x_3-1)\rangle$ and $J=\langle x_1+x_2+1,(x_3-2)(x_3-1)\rangle$ be ideals in $\CC[x_1,x_2,x_3]$ and $A:\ZZ^3\rightarrow\ZZ^2$ be the projection to the first two coordinates. Then $\cT(I)=\cT(J)$ consists of the three rays $-e_1,-e_2$ and $e_1+e_2$, each with multiplicity $2$. The ideal $J\cap \CC[x_1,x_2]$ is a linear ideal. This makes the projection of $V(J)$ to the first two coordinates a degree-2 map. The ideal $I\cap \CC[x_1,x_2]$, however, is generated in degree two, making the projection of $V(I)$ a degree-1 map.

In other words, the tropical projection formula computes the push-forward of cycles, rather than the image variety.
\qed
\end{example}

%Although a tropical cycle $X$ is pure dimensional, the image $A(X)$ need not be pure.  For example let $X$ be a cycle whose support is the union of two planes $\Span(e_1,e_2)$ and $\Span(e_3,e_4)$ in $\RR^4$.  With all multiplicities equal to one this is a tropical cycle.  Let $A$ be the projection onto $\Span(e_1,e_2,e_3)$.  Then $A(X)$ is a union of a plane and a line, hence not pure dimensional.  However, the image $A(X)$ is balanced in the following sense.

\begin{lemma}
\label{lem:balanced}
 Let $\tau$ be a ridge in $X$ such that $A(\tau)$ also has codimension $1$ in $A (\link_X(\tau))$.  Then $A (\link_X(\tau))$ is balanced with multiplicity defined in~(\ref{eqn:ST}).
\end{lemma}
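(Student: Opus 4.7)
The plan is to verify the balancing condition of $A(\link_X(\tau))$ directly at the ridge $A(\tau)$, translating the weights prescribed by (\ref{eqn:ST}) back into data attached to $X$ at $\tau$. Let $\sigma_1,\dots,\sigma_r$ be the facets of $X$ containing $\tau$ and set $v_i:=v_{\sigma_i/\tau}$; the balancing of $X$ at $\tau$ reads $\sum_i \mult{\sigma_i}{X}\,v_i\in\Span_\QQ(N_\tau)$. I partition the indices by image: for each facet $\omega$ of $A(\link_X(\tau))$, let $S_\omega:=\{i:A(\sigma_i)=\omega\}$, and let $S_0$ collect the remaining indices, for which $A(\sigma_i)\subseteq A(\tau)$ and consequently $A(v_i)\in N'_{A(\tau)}$.

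The crux is a lattice-index identity. For $i\in S_\omega$, the image $A(v_i)$ lies in $N'_\omega$ and, modulo $N'_{A(\tau)}$, is a positive integer multiple of the generator $v_{\omega/A(\tau)}$; write this multiple as $k_i$. Set $\ell:=[N'_{A(\tau)}:AN_\tau]$, which depends on $\tau$ and $A$ but not on $\sigma_i$. I would establish the identity
\[[N'_\omega:AN_{\sigma_i}]=k_i\cdot\ell.\]
Since $\tau$ has codimension $1$ in $\sigma_i$, we have $N_{\sigma_i}=N_\tau+\ZZ v_i$, so any $y=t+nv_i\in N_{\sigma_i}$ (with $t\in N_\tau$, $n\in\ZZ$) whose image lies in $N'_{A(\tau)}$ satisfies $nk_i\,v_{\omega/A(\tau)}\equiv 0\pmod{N'_{A(\tau)}}$, forcing $n=0$. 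Thus $AN_{\sigma_i}\cap N'_{A(\tau)}=AN_\tau$, and the short exact sequence
\[0\longrightarrow N'_{A(\tau)}/AN_\tau\longrightarrow N'_\omega/AN_{\sigma_i}\longrightarrow N'_\omega/(AN_{\sigma_i}+N'_{A(\tau)})\longrightarrow 0\]
has outer terms of orders $\ell$ and $k_i$, yielding the claimed identity.

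Substituting into (\ref{eqn:ST}) gives $\mult{\omega}{A(X)}=\ell\sum_{i\in S_\omega}\mult{\sigma_i}{X}\,k_i$, and summing over all facets of $A(\link_X(\tau))$:
\[\sum_\omega\mult{\omega}{A(X)}\,v_{\omega/A(\tau)}\;\equiv\;\ell\sum_{i\notin S_0}\mult{\sigma_i}{X}\,A(v_i)\;\equiv\;\ell\,A\!\left(\sum_i\mult{\sigma_i}{X}\,v_i\right)\pmod{\Span_\RR(N'_{A(\tau)})},\]
where the first congruence replaces each $k_i\,v_{\omega/A(\tau)}$ by $A(v_i)$ and the second absorbs the $i\in S_0$ terms, whose $A(v_i)$ already lie in $N'_{A(\tau)}$. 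The balancing of $X$ at $\tau$ places $\sum_i \mult{\sigma_i}{X}\,v_i$ in $\Span_\QQ(N_\tau)$, so its $A$-image lies in $\Span_\QQ(AN_\tau)\subseteq\Span_\QQ(N'_{A(\tau)})$, and the balancing of $A(\link_X(\tau))$ at $A(\tau)$ follows. The main obstacle is the lattice-index identity above; the remaining steps are bookkeeping, made clean by the decomposition $N_{\sigma_i}=N_\tau+\ZZ v_i$.
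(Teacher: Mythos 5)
Your proof is correct and follows essentially the same route as the paper's: both reduce the balancing of $A(\link_X(\tau))$ to that of $X$ at $\tau$ via the multiplicative lattice-index identity $[N'_{A\sigma_i}:AN_{\sigma_i}]=[N'_{A\sigma_i}:N'_{A\tau}+\Span_\ZZ(Av_i)]\cdot[N'_{A\tau}:AN_\tau]$ (your $k_i\cdot\ell$) together with the observation that $k_i\,v_{A\sigma_i/A\tau}\equiv Av_i$ modulo the span of $AN_\tau$. The only cosmetic differences are that you derive the index identity from the short exact sequence $0\to N'_{A\tau}/AN_\tau\to N'_\omega/AN_{\sigma_i}\to N'_\omega/(AN_{\sigma_i}+N'_{A\tau})\to 0$ (after first checking $AN_{\sigma_i}\cap N'_{A\tau}=AN_\tau$), and that you explicitly isolate the set $S_0$ of facets collapsed into $\Span N'_{A\tau}$ and the grouping of the remaining facets by common image $\omega$ — details the paper's proof treats implicitly (its notation $v_{A\sigma/A\tau}$ is strictly speaking undefined for collapsed $\sigma$, though such terms drop out of the congruence just as your $S_0$ terms do).
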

The proof of this lemma is a computation with lattice indices and is given in the Appendix.

\begin{corollary}
\label{cor:genericwelldefined}
If the support of $A(X)$ contains a full-dimensional polyhedron, then it is all of $N'_\RR$, and the multiplicity given by formula~(\ref{eqn:ST}) is constant on a dense open subset of $N'_\RR$.
\end{corollary}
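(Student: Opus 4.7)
I will argue in two steps, both exploiting the balancing provided by Lemma~\ref{lem:balanced}: first that the support of $A(X)$ covers $N'_\RR$, then that the multiplicities from (\ref{eqn:ST}) on top-dimensional cells are all equal.

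\textbf{Step 1 (coverage).} Fix a polyhedral structure on $A(X)$ refining the images of the faces of $X$ so that every codim-$1$ cell is shared by at most two top-dimensional cells. By hypothesis some top-dimensional cell $Q_0$ has multiplicity $m_0 \neq 0$. Suppose toward contradiction that some $p \in N'_\RR$ lies outside $\supp(A(X))$, and take a generic straight segment from an interior point of $Q_0$ to $p$. It must exit the support at a first point $r$, which by genericity lies in the relative interior of some codim-$1$ cell $R$ of $A(X)$, with a top-dimensional cell $Q$ of nonzero multiplicity on one side of $R$ and no cell of $A(X)$ on the other. For each ridge $\tau$ of $X$ meeting $A^{-1}(r)$ in its relative interior, the hypothesis of Lemma~\ref{lem:balanced} is met (because $Q$ is full-dimensional), so $A(\link_X(\tau))$ is balanced at $A(\tau)$. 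Summing these balancing equations over all such $\tau$ yields the balancing of $A(X)$ at $R$, which in the rank-one quotient $N'/N'_R$ forces $\mult{Q}{A(X)} \cdot v_{Q/R} \in \Span_\QQ(N'_R)$. As $v_{Q/R}$ generates this quotient, we obtain $\mult{Q}{A(X)}=0$, contradicting the choice of $Q$.

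\textbf{Step 2 (constancy).} With $\supp(A(X)) = N'_\RR$ established, every codim-$1$ cell $R$ of $A(X)$ is shared by exactly two top-dimensional cells $Q$ and $Q'$, and the primitive directions $v_{Q/R}$ and $v_{Q'/R}$ are negatives of each other modulo $\Span_\QQ(N'_R)$. The balancing of $A(X)$ at $R$ therefore collapses to $\mult{Q}{A(X)} = \mult{Q'}{A(X)}$. Since $N'_\RR$ is path-connected and tiled by the top-dimensional cells, any two of them are joined by a chain of codim-$1$ adjacencies, and the common multiplicity equals $m_0 \neq 0$ on the dense open union of their relative interiors.

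\textbf{Anticipated obstacle.} The main technical subtlety lies in passing from Lemma~\ref{lem:balanced}, which produces one balancing equation per ridge of $X$, to a single balancing equation for $A(X)$ at a codim-$1$ cell $R$. Several ridges of $X$ may project onto $R$, and one has to verify that summing the multiplicities given by (\ref{eqn:ST}) across these ridges reproduces the multiplicity of $A(X)$ on the adjacent top-dimensional cells, so that the summed identity is literally the balancing of $A(X)$ at $R$. Genericity of the chosen point in $R$ is what allows one to ignore contributions from faces of $X$ whose images have codimension greater than one.
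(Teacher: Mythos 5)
Your proof is correct and follows essentially the same approach as the paper's: derive a contradiction from the balancing condition at a hypothetical boundary codimension-one face, then use balancing across adjacent facets together with connectivity of $N'_\RR$ to conclude that the multiplicity is constant. The subtlety you flag in your final paragraph --- that Lemma~\ref{lem:balanced} produces per-ridge balancing of $A(\link_X(\tau))$ which must be aggregated into balancing of $A(X)$ at a codimension-one cell of the image --- is genuine but is glossed over in exactly the same way in the paper's own proof, so you have not missed anything the paper supplies.
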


\begin{proof}
Suppose the support of $A(X)$ is a union of finitely many polyhedra, but it is not all of $N'_\RR$. Then there is a face on the boundary of $\supp(A(X))$ with dimension $\dim(N'_\RR)-1$.  However, the balancing condition cannot hold at this face, contradicting Lemma~\ref{lem:balanced}.  The balancing condition also ensures that any two facets that meet along a codimension one face have the same multiplicity.  Since any two facets are connected by a path that avoids codimension two faces, all facets must have the same multiplicity.
\end{proof}

For subsets $S, T \subset N_\RR$, we will use $\minksum$ for Minkowski sum $S \minksum T = \{x+y : x\in S, y \in T\}$.  Also, let $\refl{S} = \{-x : x \in S\}$  and $S - T = S + (-T)$.  For a tropical cycle $X$, let $\refl{X}$ denote a tropical cycle with $\supp(\refl{X}) = - \supp(X)$ and $\mult{\omega}{\refl{X}}= \mult{-\omega}{X}$. 

Suppose $X$ and $Y$ are tropical cycles and $L$ is a subspace of the linelity spaces of $X$ and $Y$ such that $\dim(X/L) + \dim(Y/L) = \dim ((X+Y)/L)$.  We can define multiplicities on $X \minksum Y$ by applying the formula (\ref{eqn:ST}) to the projection of the Cartesian product $X \times Y$ onto $X \minksum Y$ via $(x,y) \mapsto x+y$. It can be verified straightforwardly that the product $X \times Y$ is a tropical cycle with multiplicity given by $\mult{(u,v)}{X \times Y} := \mult{u}{X} \mult{v}{Y}$. 
More concretely, a generic point $v \in X \minksum Y$ has multiplicity:
\begin{equation}
\label{eqn:MinksumMult}\mult{v}{X \minksum Y} = 
\sum_{\substack{\sigma_1, \sigma_2}}\mult{\sigma_1}{X} \mult{\sigma_2}{Y}[N_v:N_{\sigma_1}+N_{\sigma_2}]\end{equation}
where the sum is over all pairs of facets $\sigma_1 \in X$ and $\sigma_2\in Y$ such that $v \in \sigma_1 \minksum \sigma_2$.
Here the formula~(\ref{eqn:MinksumMult}) works for all $v\in X \minksum Y$ for which the fiber over $v$ in $X \times Y$ contains finitely many points, each of which lies in the relative interior of a facet in $X \times Y$.  The set of all such $v$'s form a dense relatively open subset of $X \minksum Y$.  A similar situation arises in the following definition.

\begin{definition}
\label{def:stableIntersection}
Let $X$ and $Y$ be tropical cycles in $N_\RR$. The {\em stable intersection} $X \stint Y$ is a weighted polyhedral complex defined by:
$$\supp(X \stint Y):=\{\omega\in N_\RR: \supp(\link_\omega X - {(\link_\omega Y)})=N_\RR\}$$ which has a polyhedral complex structure as a subcomplex of $X \cap Y$ with a natural polyhedral complex structure as the common refinement $\{\sigma_1 \cap \sigma_2 : \sigma_1 \in X, \sigma_2 \in Y\}$.  
For a face $\gamma \in X \stint Y$ with $\codim(\gamma) = \codim(X) + \codim(Y)$, the multiplicity $\mult{\gamma}{X \stint Y}$ is defined to be the multiplicity of $\link_\gamma X - {(\link_\gamma Y)}$ given by the formula (\ref{eqn:MinksumMult}) above. 
\end{definition}
We will prove in Theorem~\ref{thm:balanced} that $X\stint Y$ has codimension $\codim(X)+\codim(Y)$, justifying defining a multiplicity of cones $\gamma$ of this codimension.

Since $\link_\gamma X$ and $-\link_\gamma Y$ are tropical cycles, if the support of their Min\-kow\-ski sum $\link_\gamma X - (\link_\gamma Y)$ is all of $N_\RR$, then  by Corollary~\ref{cor:genericwelldefined} the multiplicity function is constant on a dense open subset.  This constant is the multiplicity of $\gamma$ in $X \stint Y$.  
The multipicity of $\link_\gamma X - (\link_\gamma Y)$ is well-defined because
\begin{align*}\dim(\link_\gamma (X)/L)+\dim(\link_\gamma (Y)/L)&=\dim(\link_\gamma X)+\dim(\link_\gamma Y)-2\dim(L)\\&=\dim(X)+\dim(Y)-2\dim(L)\\&=2n-\codim(X)-\codim(Y)-2\dim(L)\\&=2n-\codim(L)-2\dim(L)\\&=n-\dim(L)\\&=\dim(\link_\gamma X-\link_\gamma Y)-\dim(L)\end{align*}
where $L$ is the centered affine span of $\gamma$.
%The codimension assumption ensures that $\link_\gamma X$ and $-\link_\gamma Y$ have a common lineality space of dimension $\geq \dim(\gamma)$, so that the multiplicity of the Minkowski sum is well-defined.  

For a generic element $v \in N_\RR$,
$$
\mult{\gamma}{X \stint Y} = \sum_{\sigma,\tau}  \mult{\sigma}{X} \mult{\tau}{Y}[N : N_\sigma + N_\tau]
$$
where the sum is over all pairs of facets $\sigma \in \link_\gamma(X)$ and $\tau \in \link_\gamma(Y)$ such that $v \in \sigma - \tau$ (or equivalently, $\sigma$ intersects $\tau + v$). The formula works for all $v$'s for which the fiber over $v$ under the map $(x,y) \mapsto x+y$ is a finite set in $\link_\gamma(X) \times -\link_\gamma(Y)$, each lying in the relative interior of a facet.

The formula on the right hand side coincides with the cup product formula for Chow cohomology of toric varieties given by the {\em fan displacement rule} of Fulton and Sturmfels \cite[Theorem~4.2]{FultonSturmfels}. 

\medskip

Let $T^k$ be the set of tropical cycles in $N_\RR$  of codimension $k$ with rational multiplicities.  Then $T^k$ is a $\QQ$-vector space where scalar multiplication acts on the multiplicities and the addition operation is defined as follows.  For $X, Y\in T^k$, their sum $X \cyclesum Y$ is obtained by taking the union $X \cup Y$ and adding multiplicities on the overlaps.  For a generic point $\omega \in X \cup Y$, we have $$\mult{\omega}{X \cyclesum Y} = \mult{\omega}{X} + \mult{\omega}{Y},$$ while the actual multiplicities depend on the generic choice of $\omega$.
 If $\mult{\omega}{X \cyclesum Y} = 0$, then we remove the facet containing $\omega$ from $X \cyclesum Y$.  If $\mult{\omega}{X \cyclesum Y} = 0$ for $\omega$ in a dense set of $X \cup Y$, then $X \cyclesum Y$ is the zero cycle.  Note that we only add tropical cycles of the same codimension and that the sum preserves codimension unless it is the zero cycle.  

\begin{remark}\label{rmk:decompose}
\begin{enumerate}
\item  Every tropical cycle can be decomposed as a linear combination of tropical cycles with positive multiplicities. This can be done, for example, by adding and subtracting tropical cycles that are affine spans of faces with negative multiplicities. By definition the stable intersection is seen to be distributive over sums of tropical cycles with positive multiplicities. For example $\link_\gamma(X_1+X_2)-\link_\gamma(Y)=(\link_\gamma(X_1)+\link_\gamma(X_2))-\link_\gamma(Y)=$
$(\link_\gamma(X_1)-\link_\gamma(Y))+(\link_\gamma(X_2)-\link_\gamma(Y))$
with multiplicity as full-dimensional cycles in $N_\RR$.
\item
A polyhedral complex in $\RR^n$ is called {\em locally balanced} if it is pure dimensional and the link of every codimension one face positively spans a linear subspace of $\RR^n$.  We can define stable intersections of locally balanced complexes without multiplicities using the same definition, and the set-theoretic parts of the following results still hold.
\end{enumerate}
\end{remark}

The following result follows from Definition~\ref{def:stableIntersection} and is useful for computing stable intersections.

\begin{lemma}
\label{lem:computingStableIntersections}
For tropical cycles $X$ and $Y$ in $N_\RR$ with positive multiplicities,
$$
\supp(X \stint Y) = \mathop{\bigcup_{\sigma_1 \in X, \sigma_2 \in Y}}_{\dim(\sigma_1 \minksum \sigma_2) = n} \sigma_1 \cap \sigma_2 .
$$
\end{lemma}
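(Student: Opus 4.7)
The plan is to establish both inclusions directly from Definition~\ref{def:stableIntersection}, using Corollary~\ref{cor:genericwelldefined} as the main lever. The key observation is that $\link_\omega X$ has support equal to the union of the cones $\link_\omega(\sigma_1)$ over facets $\sigma_1 \in X$ containing $\omega$ (and likewise for $Y$), so
$$
\supp(\link_\omega X - \link_\omega Y) = \bigcup_{\sigma_1, \sigma_2} \bigl(\link_\omega(\sigma_1) - \link_\omega(\sigma_2)\bigr),
$$
where the union runs over pairs of facets $\sigma_1 \in X$, $\sigma_2 \in Y$ both containing $\omega$. Each summand is a polyhedral cone whose linear span equals $N_{\sigma_1} + N_{\sigma_2}$, so its dimension as a convex set equals $\dim(\sigma_1 + \sigma_2)$.

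For the inclusion $\supseteq$, given $\omega \in \sigma_1 \cap \sigma_2$ with $\dim(\sigma_1 + \sigma_2) = n$, the summand $\link_\omega(\sigma_1) - \link_\omega(\sigma_2)$ is an $n$-dimensional polyhedron contained in $\supp(\link_\omega X - \link_\omega Y)$. Since $X$ and $Y$ carry positive multiplicities, formula~(\ref{eqn:MinksumMult}) assigns positive multiplicities to all facets of $\link_\omega X - \link_\omega Y$, and Corollary~\ref{cor:genericwelldefined} then forces the support to equal $N_\RR$, giving $\omega \in \supp(X \stint Y)$. Conversely, if $\omega \in \supp(X \stint Y)$ then $\supp(\link_\omega X - \link_\omega Y) = N_\RR$ has dimension $n$; since a finite union of polyhedra has dimension equal to the maximum dimension of its components, some summand $\link_\omega(\sigma_1) - \link_\omega(\sigma_2)$ must already be $n$-dimensional, whence $\dim(\sigma_1 + \sigma_2) = n$ and $\omega \in \sigma_1 \cap \sigma_2$ lies in the right-hand side.

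The main obstacle I foresee is the careful application of Corollary~\ref{cor:genericwelldefined}: we must realize the Minkowski difference as the image of the tropical cycle $\link_\omega X \times (-\link_\omega Y)$ under the addition map $(x,y) \mapsto x+y$, and verify that this map is generically finite-to-one modulo the appropriate lineality subspace so that formula~(\ref{eqn:MinksumMult}) legitimately endows the difference with the structure of a tropical cycle before the corollary can be invoked. This is precisely the dimension bookkeeping performed in the paragraph preceding Definition~\ref{def:stableIntersection}, so it should go through cleanly without new input.
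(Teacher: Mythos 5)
Your proof is correct and follows the same route as the paper's: both reduce the statement to the equivalence "$\supp(\link_\omega X - \link_\omega Y) = N_\RR$ iff some pair of facets $\sigma_1 \ni \omega$, $\sigma_2 \ni \omega$ satisfies $\dim(\sigma_1 + \sigma_2) = n$," with one direction trivial (a full-dimensional union has a full-dimensional component) and the other direction supplied by Corollary~\ref{cor:genericwelldefined}. The paper states this equivalence in a single sentence; you have simply unfolded it, and the subtlety you flag at the end—that $\link_\omega X \times (-\link_\omega Y)$ must map generically finite-to-one modulo a lineality subspace before the corollary can be invoked—is a genuine point but is exactly the same hypothesis the paper relies on throughout its treatment of Minkowski sums, so it introduces no gap specific to your argument.
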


\begin{proof}
For any $\omega \in X \cap Y$, $\link_\omega X - {(\link_\omega Y)} = N_\RR$ if and only if there are $\sigma_1 \in X$ and $\sigma_2 \in Y$ containing $\omega$ such that $\dim(\sigma_1\minksum\sigma_2) = \dim(\sigma_1 - \sigma_2) = n$.
\end{proof}

In~\cite{RGST} stable intersections were defined by taking limits of perturbed intersections. We will now show that this is equivalent to Definition~\ref{def:stableIntersection}.
%The following result shows that the Definition~\ref{def:stableIntersection} for stable intersection coincides with the definition in \cite{RGST}. 

\begin{proposition}
\label{prop:limit} 
 Let $X$ and $Y$ be tropical cycles in $N_\RR$ with positive multiplicities. 
 Then for any generic $v \in N_\RR$, we have
 $$\supp(X \stint Y)  = \lim_{\varepsilon \rightarrow 0} \supp(X) \cap ( \supp(Y) + \varepsilon v). $$  In particular, the limit set does not depend on the choice of generic $v$.
\end{proposition}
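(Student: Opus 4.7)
\smallskip

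\noindent\textbf{Proof proposal.}
The plan is to reduce both sides to statements about which pairs of facets $(\sigma_1,\sigma_2)$ of $X$ and $Y$ contribute. By Lemma~\ref{lem:computingStableIntersections},
$\supp(X\stint Y) = \bigcup \sigma_1\cap\sigma_2$ over pairs with $\dim(\sigma_1+\sigma_2)=n$. On the other side, for small $\varepsilon>0$ the combinatorial type of $\supp(X)\cap(\supp(Y)+\varepsilon v)$ stabilizes: a pair $(\sigma_1,\sigma_2)$ contributes iff $\varepsilon v\in\sigma_1-\sigma_2$, which for small $\varepsilon$ depends only on which cone of $\link_\omega(\sigma_1)-\link_\omega(\sigma_2)$ contains $v$. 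The genericity condition on $v$ will be that $v$ avoids every cone of the form $\link_\omega(\sigma_1)-\link_\omega(\sigma_2)$ of dimension less than $n$, where $\omega\in\sigma_1\cap\sigma_2$ ranges over all relevant faces; there are only finitely many such cones, so the bad set is a finite union of proper subspaces and the good $v$'s form an open dense subset of $N_\RR$.

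For the inclusion $\supp(X\stint Y)\subseteq\lim_{\varepsilon\to 0}\supp(X)\cap(\supp(Y)+\varepsilon v)$, let $\omega\in\supp(X\stint Y)$ and pick facets $\sigma_1\in X$, $\sigma_2\in Y$ containing $\omega$ with $\dim(\sigma_1+\sigma_2)=n$, so the convex cone $\link_\omega(\sigma_1)-\link_\omega(\sigma_2)$ is full dimensional. Since the finitely many $n$-dimensional cones of this form together cover $N_\RR$, the generic $v$ lies in one of them, so we may write $v=u_1-u_2$ with $u_i\in\link_\omega(\sigma_i)$. For all sufficiently small $\varepsilon>0$, the points $\omega+\varepsilon u_1\in\sigma_1$ and $\omega+\varepsilon u_2\in\sigma_2$ satisfy $\omega+\varepsilon u_1=(\omega+\varepsilon u_2)+\varepsilon v$, giving a point in $\sigma_1\cap(\sigma_2+\varepsilon v)$ that tends to $\omega$.

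For the reverse inclusion, suppose $\omega$ is a limit point, so there are $\varepsilon_n\to 0^+$ and $\omega_n\to\omega$ with $\omega_n\in\supp(X)\cap(\supp(Y)+\varepsilon_n v)$. Since $X$ and $Y$ have finitely many facets, after passing to a subsequence we may fix facets $\sigma_1\in X$ and $\sigma_2\in Y$ with $\omega_n\in\sigma_1$ and $\omega_n-\varepsilon_n v\in\sigma_2$ for all $n$. By closedness, $\omega\in\sigma_1\cap\sigma_2$. By convexity of $\sigma_1$ the entire segment from $\omega$ to $\omega_n$ lies in $\sigma_1$, so $\omega_n-\omega\in\link_\omega(\sigma_1)$; similarly $(\omega_n-\omega)-\varepsilon_n v\in\link_\omega(\sigma_2)$. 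Subtracting yields $\varepsilon_n v\in\link_\omega(\sigma_1)-\link_\omega(\sigma_2)$, and since this set is a cone, $v$ itself lies in it.  By our genericity assumption, this cone must have dimension $n$, i.e.\ $\dim(\sigma_1+\sigma_2)=n$, so Lemma~\ref{lem:computingStableIntersections} gives $\omega\in\sigma_1\cap\sigma_2\subseteq\supp(X\stint Y)$.

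The main technical point is keeping the bookkeeping of which cones to exclude clean: once one observes that only finitely many cones $\link_\omega(\sigma_1)-\link_\omega(\sigma_2)$ occur as $(\omega,\sigma_1,\sigma_2)$ vary, the "generic $v$" condition is a finite combinatorial one, and both inclusions become elementary polyhedral arguments.  In particular, the limit set is independent of the generic choice of $v$, as it is characterized by the intrinsic condition from Lemma~\ref{lem:computingStableIntersections}.
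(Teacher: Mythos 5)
Your proof is correct and follows essentially the same route as the paper: both reduce the limit condition on $\omega$ to the statement $v\in\link_\omega X-\link_\omega Y$ and then use the finiteness of the cones arising this way to pick a generic $v$ for which this is equivalent to $\link_\omega X-\link_\omega Y=N_\RR$. The only real difference is that you unpack the argument at the level of individual facet pairs $(\sigma_1,\sigma_2)$ (invoking Lemma~\ref{lem:computingStableIntersections} and an explicit sequence/convexity argument for the two inclusions), whereas the paper phrases the same equivalences directly in terms of the full link fans $\link_\omega X$ and $\link_\omega Y$; your version fills in more of the polyhedral bookkeeping that the paper leaves implicit.
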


\begin{proof}
Let $\omega \in X \cap Y$, and $v \in N_\RR$.  Then $\omega \in \lim_{\varepsilon \rightarrow 0} X \cap ( Y + \varepsilon v)$ if and only if for every $\delta > 0$ there is an $\varepsilon > 0$  such that $X \cap (Y + \varepsilon v)$ contains a point within distance $\delta$ from $\omega$.  This holds if and only if $\link_\omega(X) \cap (\link_\omega(Y) + v) \neq \emptyset$, or equivalently $v \in \link_\omega{X} - {(\link_\omega{Y})}$. 

When $X$ and $Y$ are balanced with positive multiplicities, the support of $\link_\omega{X} -  {(\link_\omega{Y})}$ is either all of $N_\RR$ or has positive codimension.
There are only finitely many such positive-codimensional complexes as $\omega$ varies, so for  generic $v$
$$v \in \link_\omega X - {(\link_\omega Y)} \iff  \link_\omega X - {(\link_\omega Y)} = N_\RR.$$
The result follows from the two equivalences above.
\end{proof}

For any pure dimensional tropical cycles $X$ and $Y$, we say that the intersection $X \cap Y$ is {\em transverse} if the set of points $\omega \in X \cap Y$ for which $\supp(\link_\omega X)$ and $\supp(\link_\omega Y)$ are both linear spaces that span $N_\RR$ together is dense in $X \cap Y$.  If $X$ and $Y$ intersect transversely, then it follows from Lemma~\ref{lem:computingStableIntersections} that $X \stint Y = X \cap Y$.  For any point $\omega$ in $X \stint Y$ lying in the relative interiors of some facets $\sigma \in X$ and $\sigma' \in Y$ for some choice of polyhedral structure on $X$ and $Y$, we have the following simplified multiplicity formula that follows from the definition of the stable intersection:
\begin{equation}
\label{eqn:transverseMult}
\mult{\omega}{X \stint Y} = \mult{\omega}{X} \mult{\omega}{Y} [N: N_\sigma + N_{\sigma'}].
\end{equation}

Let us fix generic $v \in N_\RR$ and $\varepsilon > 0$.
For any polyhedra $\sigma \in X$ and $\tau \in Y$, the intersection $\sigma \cap (\tau + \varepsilon v)$ is either empty or contains a point in the relative interior of both $\sigma$ and $\tau + \varepsilon v$.  In the latter case, $\codim(\sigma \cap (\tau+\varepsilon v)) = \codim(\sigma)+\codim(\tau)$. Hence the intersection $X \cap (Y+\varepsilon v)$ is transverse and has codimension equal to $\codim(X)+\codim(Y)$.

We can assign multiplicities to the limit in Proposition~\ref{prop:limit} as follows.  We say that a facet $\gamma \in X \stint Y$ is the {\em limit} of the facet $\sigma \cap (\tau+\varepsilon v)$ in $X \cap (Y + \varepsilon v)$ for sufficiently small $\varepsilon > 0$ if $\sigma, \tau \supset \gamma$ and $\link_\gamma(\sigma) \cap (\link_\gamma(\tau)+v) \neq \emptyset$.  Since for generic $v$ and $\varepsilon$ the intersection $X \cap (Y + \varepsilon v)$ is transverse, hence stable, the facet $\sigma \cap (\tau+\varepsilon v)$ has multiplicity given by formula~(\ref{eqn:transverseMult}) above.  
Combining with the multiplicity formula in Definition~\ref{def:stableIntersection}, we see that 
\begin{equation}
\label{eqn:perturbedMult}
\mult{\gamma}{X \stint Y} = \sum_{\sigma, \tau} \mult{\sigma \cap (\tau + \varepsilon v)}{X \stint (Y + \varepsilon v)}
\end{equation}
where $v$ is generic, $\varepsilon > 0$ is sufficiently small, and the sum is over facets $\sigma \in X$ and $\tau \in Y$ such that the limit of $\sigma \cap (\tau+\varepsilon v)$ is $\gamma$. 

We will now see that stable intersections and multiplicities behave well under taking links and quotienting out by lineality. In the following, for a rational linear space $L \subset N_\RR$, $N_\RR/L$ is equipped with the lattice $N/N_L$.

\begin{lemma}
\label{lem:linklineality}
Let $X$ and $Y$ be tropical cycles with positive multiplicities.
\begin{enumerate}
\item For $\omega \in X \stint Y$, we have $\link_\omega(X \stint Y) = \link_\omega(X) \stint \link_\omega(Y)$.
\item For a rational linear space $L$ contained in the lineality spaces of both $X$ and $Y$, we have $(X \stint Y)/L = (X / L) \stint (Y / L)$.
\end{enumerate}
\end{lemma}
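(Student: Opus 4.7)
The plan is to derive both parts directly from Definition~\ref{def:stableIntersection} by exploiting two compatibility identities: that links commute with links, and that links and Minkowski differences commute with quotienting by a common sublineality.

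For part~(1), I would first fix polyhedral structures on $X$ and $Y$ in which $\omega$ is a relative interior point of some face, and establish the \emph{link-of-link} identity $\link_{\link_\omega(\gamma)}(\link_\omega X) = \link_\gamma(X)$ for every face $\gamma \in X$ containing $\omega$. This identity follows from observing that, for small $\varepsilon > 0$ and $u$ in the relative interior of $\link_\omega(\gamma)$, the point $\omega + \varepsilon u$ lies in the relative interior of $\gamma$, so the links at $\omega+\varepsilon u$ and at $\gamma$ coincide. Plugging this into the ``generic direction spans $N_\RR$'' characterization of $\supp(X\stint Y)$ delivers the support equality $\supp(\link_\omega(X \stint Y)) = \supp(\link_\omega X \stint \link_\omega Y)$; plugging it into the Minkowski-difference multiplicity formula shows that a facet $\link_\omega(\gamma)$, which inherits the weight $\mult{\gamma}{X \stint Y}$ from $\link_\omega(X \stint Y)$, is assigned exactly the same weight by Definition~\ref{def:stableIntersection} applied to $\link_\omega X$ and $\link_\omega Y$, since $\link_{\link_\omega \gamma}(\link_\omega X) - \link_{\link_\omega \gamma}(\link_\omega Y) = \link_\gamma X - \link_\gamma Y$.

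For part~(2), I would first observe that $L$ lies in the lineality of every face of $X \stint Y$: each such face has the form $\sigma_1 \cap \sigma_2$ for faces $\sigma_1 \in X$, $\sigma_2 \in Y$, and $L$ is in the lineality of each. Hence the quotient $(X \stint Y)/L$ is well defined with inherited multiplicities. For supports, for any lift $\omega$ of $\bar\omega \in N_\RR/L$ one has $\link_{\bar\omega}(X/L) = \link_\omega(X)/L$ and similarly for $Y$, so $\link_{\bar\omega}(X/L) - \link_{\bar\omega}(Y/L) = (\link_\omega X - \link_\omega Y)/L$ spans $N_\RR/L$ if and only if $\link_\omega X - \link_\omega Y$ already spans $N_\RR$ (since the left-hand side contains $L$ in its lineality). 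For the multiplicities, I would compare formula~(\ref{eqn:MinksumMult}) term by term: each lattice index $[N : N_{\sigma_1}+N_{\sigma_2}]$ is preserved under the quotient by $N_L$ via the third isomorphism theorem, since $N_L \subseteq N_{\sigma_1} \cap N_{\sigma_2}$.

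The main obstacle I expect is the careful bookkeeping around the link-of-link identity in part~(1): one has to choose the polyhedral structures on $X$ and $Y$ simultaneously fine enough that $\omega$ sits in the relative interior of a common face, and restrict $\varepsilon$ to be small enough that $\omega + \varepsilon u$ only meets faces of $X$ and $Y$ that already pass through $\omega$. Once this compatibility is pinned down, the rest is a routine unwinding of definitions, and the lattice-index argument in part~(2) is a clean application of the third isomorphism theorem.
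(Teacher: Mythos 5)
Your proposal is correct, and part~(1) is essentially the paper's argument: both rest on the link-of-link identity, which the paper states as $\link_v(\link_\omega Z) = \link_{\omega+\varepsilon v} Z$ for small $\varepsilon>0$ and then feeds into Lemma~\ref{lem:computingStableIntersections} for the support and into the Minkowski-difference multiplicity formula for the weights. Your face-level formulation $\link_{\link_\omega\gamma}(\link_\omega X)=\link_\gamma X$ is the same fact; the paper's pointwise phrasing sidesteps most of the polyhedral-structure bookkeeping you flag as the main obstacle, since it only ever refers to relative-interior points and never needs a simultaneous refinement of $X$ and $Y$. For part~(2), the paper takes a shortcut you do not: it applies a unimodular change of coordinates so that $L$ becomes a coordinate subspace, after which the stable intersection manifestly factors as a product with $L$ and the claim is immediate. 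Your alternative --- checking the support via $\link_{\bar\omega}(X/L)=\link_\omega(X)/L$ and the multiplicities term by term using the third isomorphism theorem applied to $[N:N_{\sigma_1}+N_{\sigma_2}]$ with $N_L\subseteq N_{\sigma_1}\cap N_{\sigma_2}$ --- is correct and more explicit, at the cost of a longer verification. Both routes are elementary; the paper's buys brevity by front-loading a coordinate change, while yours makes the lattice-index invariance under quotients visible, which is a useful sanity check given how central those indices are in the rest of the appendix.
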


\begin{proof}
The set-theoretic part of the first statement follows from Lemma~\ref{lem:computingStableIntersections} and the multiplicity statement follows from the fact that $\link_v (\link_\omega Z) = \link_{\omega+\varepsilon v} Z$ for any sufficiently small positive real number $\varepsilon$. Indeed, for generic $\gamma$,
$$\mult{v}{\link_\omega(X\stint Y)}=\mult{\omega+\varepsilon v}{X\stint Y}=\mult{\gamma}{\link_{\omega+\varepsilon v}(X)-\link_{\omega+\varepsilon v}(Y)}=$$%\\
$$\mult{\gamma}{\link_v(\link_{\omega}(X))-\link_v(\link_{\omega}(Y))}=\mult{v}{\link{\omega}(X)\stint\link_\omega(Y)}.$$

For the second statement, first observe that for a unimodular coordinate change $U$ we have $U(X\stint Y)=U(X)\stint U(Y)$. Pick a lattice basis of $N_L$ and extend it to a lattice basis of $N$. In this basis $L$ is a coordinate subspace and its presence in $X$ and $Y$ does not affect the construction of the stable intersection other than having to take the product with $L$.
\end{proof}

\begin{lemma}
\label{lem:diagonal}
Let $X$ and $Y$ be tropical cycles in $N_\RR$ with positive multiplicities. If we identify $x\in N_\RR$ with $(x,x)\in N_\RR \times N_\RR$ then $X\stint Y=(X\times Y)\stint \{\Delta\}$,  where $\Delta = \{(x,x) : x \in N_\RR\}$ is the diagonal with multiplicity $1$ in $N_\RR\times N_\RR$ and is identified with $N_\RR$.
\end{lemma}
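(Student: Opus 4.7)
The plan is to prove set-theoretic equality first and then match multiplicities term by term, using the linear map $(a,b)\mapsto a-b$ to identify $(N_\RR\times N_\RR)/\Delta$ with $N_\RR$.

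For the set-theoretic part, first observe that since $\Delta$ is a single linear subspace, $\link_{(\omega,\omega)}(\Delta)=\Delta$, and by the product structure $\link_{(\omega,\omega)}(X\times Y)=\link_\omega(X)\times\link_\omega(Y)$. A point $(a,b)\in N_\RR\times N_\RR$ lies in $\link_\omega(X)\times\link_\omega(Y)-\Delta$ iff there is $t\in N_\RR$ with $t+a\in\link_\omega(X)$ and $t+b\in\link_\omega(Y)$, which is equivalent to $a-b\in\link_\omega(X)-\link_\omega(Y)$. Consequently the Minkowski difference equals all of $N_\RR\times N_\RR$ iff $\link_\omega(X)-\link_\omega(Y)=N_\RR$. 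By Definition~\ref{def:stableIntersection} this shows $\supp((X\times Y)\stint\Delta)=\{(\omega,\omega):\omega\in\supp(X\stint Y)\}$.

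For multiplicities, pick $\omega\in\supp(X\stint Y)$ and a generic point $v=(v_1,v_2)\in N_\RR\times N_\RR$. I would apply the formula after Definition~\ref{def:stableIntersection} to the right-hand side: the facets of $\link_{(\omega,\omega)}(X\times Y)$ are precisely products $\link_\omega(\sigma)\times\link_\omega(\tau)$ for facets $\sigma\in X$, $\tau\in Y$ containing $\omega$, with weight $\mult{\sigma}{X}\mult{\tau}{Y}$, while $\Delta$ contributes the single facet $\Delta$ with weight $1$. The condition that $v$ lies in $(\link_\omega(\sigma)\times\link_\omega(\tau))-\Delta$ is, by the calculation above, equivalent to $w:=v_1-v_2\in\link_\omega(\sigma)-\link_\omega(\tau)$, and $w$ is generic in $N_\RR$ when $v$ is generic.

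The remaining ingredient is the lattice index. The short exact sequence
\[
0\to N_\Delta\to N\times N\xrightarrow{(a,b)\mapsto a-b} N\to 0
\]
identifies $(N\times N)/N_\Delta$ with $N$, sending $(N_\sigma\times N_\tau)+N_\Delta$ to $N_\sigma-N_\tau=N_\sigma+N_\tau$. Therefore $[N\times N:(N_\sigma\times N_\tau)+N_\Delta]=[N:N_\sigma+N_\tau]$. Plugging this into the multiplicity formula yields
\[
\mult{(\omega,\omega)}{(X\times Y)\stint\Delta}=\sum_{\sigma,\tau}\mult{\sigma}{X}\mult{\tau}{Y}[N:N_\sigma+N_\tau]=\mult{\omega}{X\stint Y},
\]
where in both sums the pairs $(\sigma,\tau)$ are exactly those with $w\in\link_\omega(\sigma)-\link_\omega(\tau)$. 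The main (minor) obstacle is just keeping track of the lattice bookkeeping under the quotient by $N_\Delta$; once that is in place the two formulas become the same sum.
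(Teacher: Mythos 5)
Your argument is correct and is essentially the paper's own proof: both reduce the diagonal stable intersection to the original one via the linear map $(a,b)\mapsto a-b$, observe that $(\sigma\times\tau)-\Delta$ meets a point $(v_1,v_2)$ exactly when $v_1-v_2\in\sigma-\tau$, and match the lattice indices $[N\times N:(N_\sigma\times N_\tau)+N_\Delta]=[N:N_\sigma+N_\tau]$. The only stylistic difference is that the paper carries out the index computation with explicit matrices whose columns span the relevant sublattices, whereas you package the same bookkeeping in the short exact sequence $0\to N_\Delta\to N\times N\to N\to 0$ and the isomorphism theorems, which is a tidier way to see the same fact.
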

\begin{proof}
Let $\omega\in X\cap Y$, $\sigma_1\in\link_\omega(X)$, and $\sigma_2\in\link_\omega( Y)$.  Let $A_1$ and $A_2$ be matrices whose sets of columns are lattice bases for $N_{\sigma_1}$ and $N_{\sigma_2}$ respectively.  Then the columns of the matrix 
$
\left(
\begin{array}{ccc}
A_1 & 0 & I \\
0 & A_2 & I \\
\end{array}
\right)
$
span $N_{\sigma_1 \times \sigma_2} + N_\Delta$ over $\ZZ$, while the columns of $\left(
\begin{array}{cc}
A_1 & -A_2 \\
\end{array}
\right)
$
span $N_{\sigma_1 - \sigma_2}$ over $\ZZ$.   Hence $\dim((\sigma_1 \times \sigma_2)\minksum\Delta) = 2n$ if and only if $\dim(\sigma_1\minksum \sigma_2) = n$. % Moreover, these two matrices have the same index (gcd of maximal minors), so $[N \times N : N_{\sigma_1 \times \sigma_2}+N_\Delta] = [N :N_{\sigma_1}+N_{\sigma_2}]$. 
Moreover, 
\begin{align*}[N \times N : N_{\sigma_1 \times \sigma_2}+N_\Delta] &= [N \times N : \{0\}\times (-N_{\sigma_1})+\{0\}\times N_{\sigma_2}+N_\Delta] \\ &= [N \times N : \{0\}\times (N_{\sigma_1}+N_{\sigma_2})+N_\Delta] \\ &= [N :N_{\sigma_1}+N_{\sigma_2}].\end{align*}  For (generic) $v_1, v_2 \in N_\RR$, we have $(v_1, v_2) \in (\sigma_1 \times \sigma_2)-\Delta$ if and only if $v_1 - v_2 \in \sigma_1 - \sigma_2$.  The assertion follows from Definition~\ref{def:stableIntersection}.
\end{proof}

The next three lemmas will be used to prove Theorems~\ref{thm:balanced} and~\ref{thm:assoc}.  Their proofs, which are elementary but difficult, are given in the Appendix.

\begin{lemma}
\label{lem:hyperplane}
Let  $X$ be a tropical cycle and $H$ be a tropical cycle whose support is an affine hyperplane, both with positive multiplicities.  Then $X \stint H$ is also a tropical cycle, possibly zero, with $\codim(X \stint H) = \codim(X)+1$.
\end{lemma}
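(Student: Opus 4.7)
The lemma has two claims: $\codim(X \stint H) = \codim(X)+1$, and balancedness of $X \stint H$. For the first, Lemma~\ref{lem:computingStableIntersections} gives $\supp(X \stint H) = \bigcup_\sigma (\sigma \cap H)$, the union over facets $\sigma$ of $X$ satisfying $\dim(\sigma \minksum H) = n$ (the unique facet of $H$ being $H$ itself); this dimension condition is equivalent to $\sigma$ not being parallel to $H$. For each such $\sigma$, $\sigma \cap H$ has codimension one in $\sigma$, hence codimension $\codim(X)+1$ in $N_\RR$. If no facet of $X$ is non-parallel to $H$ then $X \stint H$ is the zero cycle, as allowed by the statement.

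\textbf{Reduction to a two-dimensional fan.} For the balancing I verify the condition at each ridge $\tau$ of $X \stint H$. Since $\tau \subseteq H$, the link $\link_\tau H$ is a linear hyperplane, and $\dim(\link_\tau X) = \dim(X) - \dim(\tau) = 2$. Lemma~\ref{lem:linklineality}(1) reduces the balancing at $\tau$ to the balancing at the origin of $\link_\tau X \stint \link_\tau H$, and Lemma~\ref{lem:linklineality}(2) allows me to quotient out by $N_\tau$. Thus it suffices to prove: for a two-dimensional tropical fan $X$ in $N_\RR$ with positive multiplicities and a linear hyperplane $H = \ker(a)$ with $a \in N^*$ primitive, the sum $S := \sum_\rho \mult{\rho}{X \stint H}\, u_\rho$ over rays $\rho$ of $X \stint H$ (with $u_\rho$ the primitive lattice generator) vanishes in $N_\RR$.

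\textbf{Multiplicities and main computation.} The rays of $X\stint H$ split into two types. Type~(I): a ray $\rho$ of $X$ contained in $H$ whose link in $X$ has two-dimensional cones on both sides of $H$. From Definition~\ref{def:stableIntersection} applied to $\link_\rho X - \link_\rho H$ at a generic $v$ with $a(v)>0$,
\[
\mult{\rho}{X \stint H}=\mult{H}{H}\sum_{\sigma\ni\rho,\;a(v_{\sigma/\rho})>0}\mult{\sigma}{X}\,a(v_{\sigma/\rho}),
\]
and the balancing of $X$ at $\rho$ (combined with $a(u_\rho)=0$) shows $\sum_\sigma \mult{\sigma}{X}a(v_{\sigma/\rho}) = 0$, so the choice of side for $v$ is immaterial. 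Type~(II): a ray $\rho' = \sigma \cap H$ arising from each two-dimensional cone $\sigma = \cone(u_1, u_2)$ of $X$ with $a(u_1)a(u_2) < 0$, whose transverse multiplicity is $\mult{\rho'}{X\stint H} = \mult{\sigma}{X}\mult{H}{H}[N:N_\sigma+N_H]$ by formula~(\ref{eqn:transverseMult}). A lattice-index computation identifies $[N:N_\sigma+N_H]\, u_{\rho'}$ with $(a(u_2)u_1 - a(u_1) u_2)/d_\sigma$, where $d_\sigma := [N_\sigma : \ZZ u_1 + \ZZ u_2]$. Regrouping $S$ by the 2-cones of $X$ not contained in $H$, the contributions reorganize as a sum of vectors indexed by the rays of $X$ lying in $H$; the balancing of $X$ at each such ray (applied once directly and once after composing with $a$) then forces $S = 0$.

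\textbf{Main obstacle and alternative route.} The chief difficulty is the careful bookkeeping of the lattice indices $d_\sigma$ and of the primitive vectors $u_{\rho'}$, and matching them with the $v_{\sigma/\rho}$ appearing in the balancing of $X$; this is nontrivial precisely when $u_1,u_2$ do not already generate $N_\sigma$. A potentially cleaner alternative is a perturbation argument: for generic $v\in N_\RR$ and sufficiently small $\varepsilon>0$, $X\cap(H+\varepsilon v)$ is a transverse intersection, whose balancing at each (perturbed) ridge follows immediately from the balancing of $X$ by applying the quotient map $a:N\to N/N_H$; the balancing of $X\stint H$ is then inherited in the $\varepsilon\to 0$ limit using Proposition~\ref{prop:limit} for support convergence and formula~(\ref{eqn:perturbedMult}) for the convergence of multiplicities.
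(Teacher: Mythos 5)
Your codimension argument has a genuine gap. You assert that for each facet $\sigma$ of $X$ with $\dim(\sigma \minksum H) = n$, the set $\sigma \cap H$ has codimension one in $\sigma$. This is false in general: $H$ can meet $\sigma$ only in a proper face, even a single point. For example, take $n = 3$, $\sigma = \cone(e_1, e_2)$, and $H = \{x_1 + x_2 = 0\}$; then $\dim(\sigma + H) = 3$ but $\sigma \cap H = \{0\}$, which has codimension two in $\sigma$. What your argument actually gives is only the easy upper bound $\dim(X \stint H) \leq \dim(X) - 1$, which is the first line of the paper's proof. The substance of the paper's dimension argument is the purity claim: given any $\omega \in X \stint H$, one must \emph{produce} a facet $\sigma'$ with $\omega \in \sigma' \cap H$ and $\dim(\sigma' \cap H) = \dim(X) - 1$. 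The paper does this by choosing a complementary subspace $U \subset H$ of dimension $\codim(X)$, invoking Corollary~\ref{cor:genericwelldefined} to get $X \minksum U = N_\RR$, and extracting from the induced cover of $H$ a cone $\sigma'$ with $(\sigma' \minksum U)\cap H$ full-dimensional in $H$, whence $\dim(\sigma' \cap H)\geq\dim(X)-1$. This step leans essentially on balancing through Corollary~\ref{cor:genericwelldefined} and cannot be replaced by inspecting a single facet.

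On balancing, your main route (classifying rays of the two-dimensional link into rays of $X$ lying in $H$ and transverse crossings, then doing a direct lattice-index computation) is different from the paper's and is left as a sketch precisely at the bookkeeping step you flag as the obstacle. The paper instead takes the route you mention only as an alternative: perturb $H$ to $H + \varepsilon v$ so that the intersection with the two-dimensional fan $X$ is transverse (eliminating your Type~(I) rays entirely), reduce the balancing check to vertices of $X \cap (H + \varepsilon v)$ via the reasoning behind equation~(\ref{eqn:perturbedMult}), and after taking one more link reduce to a single lattice-index identity in a fan with one-dimensional lineality not contained in $H$. So your ``alternative route'' is the paper's proof; your main route is plausible in outline but not carried through, and the dimension half of the lemma is missing the key purity step.
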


\begin{lemma}\label{lem:iterate}
Let $X$ be an arbitrary tropical cycle with positive multiplicities.  Suppose $Y$ is a tropical cycle of codimension $r$ whose support is an affine linear space such that $Y = ((H_1 \stint H_2) \cdots H_r)$ where $H_1,\dots,H_r$ are tropical cycles with positive multiplicities whose supports are affine hyperplanes. Then
$$
X \stint Y = (((X \stint H_r)\stint H_{r-1})\cdots H_1).
$$
In particular, it follows that $X\stint Y$ is a tropical cycle since the right hand side is a tropical cycle by Lemma ~\ref{lem:hyperplane}.
\end{lemma}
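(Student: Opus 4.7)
The plan is induction on $r$, with base case $r=1$ immediate. For the inductive step $r\geq 2$, the strategy is to realize both sides as limits of set-theoretic transverse intersections using the fan displacement rule (Proposition~\ref{prop:limit}), and then compare multiplicities facet-by-facet. Lemma~\ref{lem:hyperplane} inductively guarantees that each intermediate $W_j := ((\cdots(X \stint H_r)\stint H_{r-1})\cdots\stint H_{r-j+1})$ is a tropical cycle, so Proposition~\ref{prop:limit} applies at every step of the iteration. Chaining the perturbations with nested magnitudes $\varepsilon^r \ll \varepsilon^{r-1} \ll \cdots \ll \varepsilon$ so that each inner limit stabilizes before the next outer one is taken, I expect that for generic $w_1,\ldots,w_r$ and sufficiently small $\varepsilon > 0$ the iterated stable intersection $(((X \stint H_r)\stint H_{r-1})\cdots H_1)$ is realized as the $\varepsilon\to 0$ limit of the set-theoretic transverse intersection
$$X \cap (H_r + \varepsilon^r w_r) \cap \cdots \cap (H_1 + \varepsilon w_1),$$
equipped with multiplicities from iterating formula~(\ref{eqn:transverseMult}).

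On the other side, since $Y$ has codim-$r$ affine linear support $L+p$, the balancing condition forces its multiplicity to be a constant $d$ on $L+p$; hence $Y = d\cdot(L+p)$ as a cycle, and by scalar-linearity of stable intersection in the multiplicity argument, $X \stint Y = d\cdot(X \cap(L+p+\mu v))$ for generic $v$ and small $\mu > 0$, again as a transverse intersection. To match the right-hand side I would apply the same nested-displacement argument (without $X$) to $Y$ itself: this realizes $Y$ as the limit of a transverse intersection of perturbed $H_i$'s and shows that the iterated transverse intersection multiplicity of the $H_i$'s equals $d$. Thus for small $\varepsilon$ the perturbed intersection $(H_r + \varepsilon^r w_r)\cap\cdots\cap(H_1 + \varepsilon w_1)$ is an affine subspace parallel to $L$ carrying combined multiplicity $d$, and intersecting transversely with $X$ gives, facet-by-facet via formula~(\ref{eqn:transverseMult}), the same multiplicity as $d\cdot (X \cap (L+p+\mu v))$, matching the left-hand side in the limit.

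I anticipate two main obstacles. First, rigorously justifying the nested-limit substitutions---tracking multiplicities via formula~(\ref{eqn:perturbedMult}) so that the iterated stable intersection is indeed captured by a single transverse intersection in the $\varepsilon\to 0$ limit---requires care, particularly because different orderings of perturbation magnitudes may be needed when realizing $Y$ versus the right-hand side. Second, verifying that the iterated transverse intersection multiplicity of $X$ and the perturbed $H_i$'s agrees with $\mult{\gamma}{X}\cdot d\cdot[N:N_\sigma + N_L]$ at each facet $\gamma$ (where $\sigma\in X$ is the facet containing $\gamma$) amounts to an inclusion-exclusion lattice-index identity expressing associativity of the transverse intersection multiplicity.
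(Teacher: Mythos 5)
Your plan takes a genuinely different route from the paper's, and while it is not obviously wrong, it leaves the hardest parts undone — parts the paper's route avoids entirely.

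The paper's proof also inducts on $r$, but it uses the inductive hypothesis in a much stronger way than you do: the inductive hypothesis applied to $L := ((H_1\stint H_2)\cdots H_{r-1})$ rewrites the right-hand side as $(X\stint H_r)\stint L$, so the whole lemma collapses to a single two-term associativity statement $(X\stint H)\stint L = X\stint (H\stint L)$ with $H$ a hyperplane and $L$ a linear space. That statement is then proved directly: set-theoretic equality via the projections $\pi_H,\pi_L,\pi_{HL}$ onto $H^\perp,L^\perp,(H\cap L)^\perp$; agreement of the contributing facets via a \emph{single} level of nested perturbation ($v_1 = \varepsilon v$ with $\varepsilon$ small relative to a fixed generic $v_2$, and $v_3 = v_1+v_2$, using finiteness of the fan to stabilize the set of contributing $\sigma$'s); and then the explicit lattice-index identity $[N:A+C][N:B+(A\cap C)]=[N:A+B][N:(A\cap B)+C]$. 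You, by contrast, only use the induction (via Lemma~\ref{lem:hyperplane}) to know the intermediate cycles are well-defined, and then try to compare the full $r$-fold iterated product to $X\stint Y$ directly via an $r$-level nested-$\varepsilon$ limit.

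That leaves two substantive gaps, both of which you correctly anticipate but neither of which is routine. First, the $r$-level nested perturbation: the combined offset $\varepsilon w_1 + \varepsilon^2 w_2 + \cdots + \varepsilon^r w_r$ is dominated by $\varepsilon w_1$, so as $\varepsilon\to 0$ its direction modulo $L$ converges to a \emph{fixed, non-generic} direction, namely (the image of) $w_1$. Proposition~\ref{prop:limit} only asserts independence of the limit from a generic $v$; to use a lexicographically-shrinking curve of offsets instead, you must argue separately that it traverses the same chamber structure as a generic ray, i.e.\ that the set of facets $\sigma\in X$ meeting $L + u(\varepsilon)$ stabilizes and coincides with the generic one. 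The paper sidesteps this by doing only one level of nesting, with the remaining offset $v_2$ fully generic. Second, your ``inclusion--exclusion lattice-index identity'' is not just bookkeeping: iterating formula~(\ref{eqn:transverseMult}) $r$ times produces a product of $r$ lattice indices of the form $[N : N_{\sigma\cap H_r\cap\cdots\cap H_{j+1}} + N_{H_j}]$, and matching this against the single index $[N:N_\sigma+N_L]$ times $\operatorname{mult}(Y)$ requires exactly the telescoping identity the paper proves at the abelian-group level. You would need to state and prove the $r$-fold version, whereas the paper only needs the two-term version because of the inductive reduction. My recommendation: use the inductive hypothesis for what it is worth — reduce to $(X\stint H)\stint L = X\stint(H\stint L)$ — and then your perturbation argument becomes a manageable single-level nesting plus the two-term lattice identity.
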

\noindent The stable intersections on the right hand side are well defined by Lemma~\ref{lem:hyperplane}.

\begin{lemma}
\label{lem:assocLinear}
Let $X, L_1,$ and $L_2$ be tropical cycles with positive multiplicities, and suppose that the supports of $L_1$ and $L_2$ are affine linear spaces.  Then
$$
X \stint (L_1 \stint L_2) = (X \stint L_1) \stint L_2.
$$
\end{lemma}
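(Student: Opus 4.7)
The plan is to reduce the associativity question entirely to the hyperplane case via Lemma~\ref{lem:iterate}, and then observe that iterated stable intersection with hyperplanes is independent of the order in which the hyperplanes appear. Since $L_1$ and $L_2$ are supported on affine linear subspaces, each can be expressed as an iterated stable intersection of hyperplane cycles, and the lemma follows by comparing two such expressions.

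First I would verify that every tropical cycle $L$ of codimension $r$ with positive multiplicity $m$ and support an affine linear subspace admits a presentation $L = (\cdots((H_1\stint H_2)\stint H_3)\cdots \stint H_r)$, where the $H_i$ are tropical hyperplane cycles with positive multiplicities. Concretely, pick $r$ rational affine hyperplanes whose set-theoretic intersection equals $\supp(L)$, assign each multiplicity one, and apply the multiplicity formula from Definition~\ref{def:stableIntersection} at each step: the resulting cycle is supported on $\supp(L)$ with multiplicity equal to a lattice index $d$. Rescaling one of the $H_i$ by the positive rational $m/d$ produces $L$.

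Next I would establish an order-independence principle: for any tropical cycle $X$ with positive multiplicities and any two hyperplane cycles $H,H'$ with positive multiplicities,
\[
(X\stint H)\stint H' \;=\; (X\stint H')\stint H.
\]
Apply Lemma~\ref{lem:iterate} to $Y := H\stint H'$, which is an affine linear cycle of codimension $2$. Using the decomposition $(H_1,H_2)=(H',H)$ gives $X\stint(H\stint H') = (X\stint H)\stint H'$, while using $(H_1,H_2)=(H,H')$ (and the symmetry $H\stint H' = H'\stint H$ from Definition~\ref{def:stableIntersection}) gives $X\stint(H\stint H') = (X\stint H')\stint H$. Adjacent transpositions then let us reorder any iterated stable intersection of $X$ with a list of hyperplane cycles, so the result depends only on the multiset of hyperplanes.

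Finally, combine the two steps. Write $L_1 = (\cdots((H_1\stint H_2)\cdots \stint H_p))$ and $L_2 = (\cdots((H'_1\stint H'_2)\cdots\stint H'_q))$ as iterated stable intersections of hyperplanes via the first step. Applying Lemma~\ref{lem:iterate} twice,
\[
(X\stint L_1)\stint L_2 \;=\; \bigl((\cdots((X\stint H_p)\cdots\stint H_1))\stint H'_q\bigr)\cdots\stint H'_1,
\]
an iterated stable intersection of $X$ with $H_1,\dots,H_p,H'_1,\dots,H'_q$ in a specific order. On the other hand, iterating Lemma~\ref{lem:iterate} (together with the order-independence of step two applied to the $H_i$'s and $H'_j$'s themselves) shows that $L_1\stint L_2$ is also an iterated stable intersection of the same $p+q$ hyperplanes, to which Lemma~\ref{lem:iterate} then reduces $X\stint(L_1\stint L_2)$ to an iterated stable intersection of $X$ with the same multiset of hyperplanes, possibly in a different order. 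By the order-independence principle, both expressions coincide, proving the lemma.

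The main obstacle is the multiplicity bookkeeping in the first step: one must ensure that a suitable scaling of the chosen hyperplanes produces exactly the multiplicity of $L$ and not merely of $\supp(L)$. Once that is done and the existence of a hyperplane decomposition is secured, Lemma~\ref{lem:iterate} and the symmetry of Definition~\ref{def:stableIntersection} carry out the rest mechanically.
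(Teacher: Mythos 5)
Your proof is correct in substance, and it takes a genuinely different route from the paper's. The paper constructs hyperplanes \emph{from a lattice basis} extending a basis of $L\cap N$: taking coordinate hyperplanes in that basis automatically forces multiplicity $1$ for every intermediate stable intersection, which gives permutation-invariance of the presentation $L=((H_{\sigma(1)}\stint H_{\sigma(2)})\cdots H_{\sigma(r)})$ ``for free,'' so no separate order-independence step is needed. You instead take arbitrary hyperplanes whose intersection is $\supp(L)$ and prove a standalone swap principle $(X\stint H)\stint H'=(X\stint H')\stint H$ by applying Lemma~\ref{lem:iterate} to the codimension-$2$ cycle $Y=H\stint H'$ with its two presentations $(H,H')$ and $(H',H)$; adjacent transpositions then give full reorderability. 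That observation is a clean and reusable insight that the paper's construction avoids but also hides. Two small points to tighten. First, your rescaling by $m/d$ introduces rational multiplicities, which Section~\ref{sec:basics} does not allow; the cleaner fix (which the paper uses) is simply to reduce WLOG to the case where $L_1,L_2$ have multiplicity $1$ by linearity of the stable intersection in multiplicities, and then use a lattice-basis choice of hyperplanes so $d=1$ outright. Second, your swap argument via Lemma~\ref{lem:iterate} requires $H\stint H'$ to be a nonzero cycle with affine linear support; when $H$ and $H'$ coincide or are parallel (so $H\stint H'=0$) one should observe directly that both $(X\stint H)\stint H'$ and $(X\stint H')\stint H$ vanish, since each is a cycle supported inside $H\cap H'$.
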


\begin{theorem}
\label{thm:balanced} 
For tropical cycles $X$ and $ Y$, the stable intersection $X \stint  Y$ is also a tropical cycle, balanced with $$\codim(X \stint  Y) = \codim(X) + \codim( Y).$$  
\end{theorem}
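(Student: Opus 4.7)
The strategy is to use Lemmas~\ref{lem:diagonal} and~\ref{lem:iterate} to reduce the general case to an iterated intersection with hyperplanes. First, by Remark~\ref{rmk:decompose}(1), every tropical cycle decomposes as a $\QQ$-linear combination of tropical cycles with positive multiplicities, and stable intersection distributes over such sums while preserving codimension (or yields the zero cycle, which trivially satisfies the conclusion). Since tropical cycles of a fixed codimension form a $\QQ$-vector space under $\cyclesum$, it suffices to treat the case when both $X$ and $Y$ have positive multiplicities.

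Under that assumption, Lemma~\ref{lem:diagonal} identifies $X \stint Y$ with $(X \times Y) \stint \Delta$, where $\Delta \subset N_\RR \times N_\RR$ is the diagonal equipped with multiplicity $1$. Then I would write $\Delta$ as an iterated stable intersection of hyperplanes: fixing a lattice basis of $N$, set $H_i = \{(x,y) \in N_\RR \times N_\RR : x_i = y_i\}$ for $i=1,\dots,n$, each a linear hyperplane with multiplicity $1$ (hence a tropical cycle). These hyperplanes meet transversely at $\Delta$, and because the sublattice of $N \times N$ parallel to each partial intersection $H_{i_1} \cap \cdots \cap H_{i_k}$ is saturated, all lattice indices appearing in the transverse multiplicity formula~(\ref{eqn:transverseMult}) are equal to $1$. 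Thus $H_1 \stint H_2 \stint \cdots \stint H_n = \Delta$ as tropical cycles.

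Now Lemma~\ref{lem:iterate}, applied with $X \times Y$ in place of $X$ (it has positive multiplicities and codimension $\codim(X) + \codim(Y)$ in $N_\RR \times N_\RR$) and the above expression for $\Delta$ in place of $Y$, yields that $(X \times Y) \stint \Delta$ is a tropical cycle of codimension $\codim(X) + \codim(Y) + n$ in $N_\RR \times N_\RR$. Its support lies in $\Delta$, a subspace of dimension $n$, so under the identification $\Delta \cong N_\RR$ from Lemma~\ref{lem:diagonal} it becomes a tropical cycle of codimension $\codim(X) + \codim(Y)$ in $N_\RR$. This identification is induced by a lattice isomorphism $N_\Delta \cong N$, so the balancing condition transfers.

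The main obstacle I expect is the multiplicity verification $H_1 \stint \cdots \stint H_n = \Delta$ with the correct weight $1$; although the lattice index computation is elementary, it must be carried out carefully at each step of the iterated intersection to ensure that no nontrivial index creeps in. Everything else is bookkeeping built on the already established hyperplane intersection lemmas and the diagonal trick.
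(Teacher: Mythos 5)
Your proposal is correct and follows essentially the same route as the paper: reduce to positive multiplicities via Remark~\ref{rmk:decompose}, apply the diagonal trick of Lemma~\ref{lem:diagonal}, then invoke Lemma~\ref{lem:iterate}. The only difference is that you make explicit (with coordinates and the saturation check) the fact that $\Delta$ is an iterated stable intersection of multiplicity-$1$ hyperplanes, whereas the paper leaves this implicit at this point and only spells out the construction in the proof of Lemma~\ref{lem:assocLinear} in the Appendix.
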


\begin{proof}

Every tropical cycle can be decomposed as a cycle difference (union) of positive tropical cycles, and stable intersection is distributive over the cycle sum; see Remark~\ref{rmk:decompose}.  Hence we can assume that $X$ and $Y$ have positive multiplicities.
By Lemma~\ref{lem:diagonal} the stable intersection $X \stint  Y$ can be identified with $(X\times Y)\stint \{\Delta\}$ where $\Delta$ is the diagonal in $N_\RR \times N_\RR$. By Lemma~\ref{lem:iterate} taking the stable intersection with the diagonal $\Delta$ is a tropical cycle, balanced with expected codimension.  Then $\codim(X\times  Y \stint \{\Delta\}) = \codim(X)+\codim( Y)+n$ in $N_\RR \times N_\RR$.  Identifying $\Delta$ with $N_\RR$ reduces the codimension by $n$, so we have $\codim(X\stint  Y) = \codim(X)+\codim( Y)$.
\end{proof}

\begin{theorem}
\label{thm:assoc}
Stable intersection is associative, i.e.\ for any three tropical cycles $X$, $ Y$, and $ Z$, we have 
$$
\left( X \stint  Y \right) \stint  Z = X \stint \left(  Y \stint  Z \right).
$$
\end{theorem}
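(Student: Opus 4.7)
The plan is to lift both associations of $X \stint Y \stint Z$ to a single stable intersection in the triple product $N_\RR^3$, and then conclude via Lemma~\ref{lem:assocLinear} applied to two partial diagonals.

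By Remark~\ref{rmk:decompose} and distributivity of stable intersection over cycle sums, I reduce to the case of positive multiplicities. In $N_\RR^3$, let
\[
\Delta_{12} := \{(x,x,z) : x,z \in N_\RR\} \quad \text{and} \quad \Delta_{23} := \{(x,y,y) : x,y \in N_\RR\},
\]
both with multiplicity one. These are affine linear cycles meeting transversely in the small diagonal $\Delta = \{(x,x,x)\}$, so $\Delta_{12} \stint \Delta_{23} = \Delta_{23} \stint \Delta_{12} = \Delta$ with multiplicity one; I identify $\Delta$ with $N_\RR$. Lemma~\ref{lem:assocLinear}, applied with $X \times Y \times Z$ in place of $X$ and $\Delta_{12}, \Delta_{23}$ (in either order) as $L_1, L_2$, then gives
\[
\bigl((X \times Y \times Z) \stint \Delta_{12}\bigr) \stint \Delta_{23} \;=\; (X \times Y \times Z) \stint \Delta \;=\; \bigl((X \times Y \times Z) \stint \Delta_{23}\bigr) \stint \Delta_{12}.
\]

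It then suffices to identify $(X \stint Y) \stint Z$ with the leftmost cycle and $X \stint (Y \stint Z)$ with the rightmost cycle (as cycles in $N_\RR$). For the first, I would prove a ``padded'' version of Lemma~\ref{lem:diagonal}, namely
\[
(X \times Y \times Z) \stint \Delta_{12} \;=\; (X \stint Y) \times Z,
\]
viewed as cycles in $\Delta_{12} \cong N_\RR \times N_\RR$ via $(x,x,z) \leftrightarrow (x,z)$. A second application of Lemma~\ref{lem:diagonal} to the cycles $X \stint Y$ and $Z$ in $N_\RR$ then gives $(X \stint Y) \stint Z = ((X \stint Y) \times Z) \stint \Delta_{N_\RR \times N_\RR}$, and under the identification the diagonal of $N_\RR \times N_\RR$ corresponds to $\Delta_{12} \cap \Delta_{23}$; a short lattice-index check, using the transversality of $\Delta_{23}$ to $\Delta_{12}$ along $\Delta$, reconciles the stable intersection inside $\Delta_{12} \cong N_\RR^2$ with the one inside the ambient $N_\RR^3$. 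The second identification is entirely symmetric.

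The hard part will be the padded identity $(X \times Y \times Z) \stint \Delta_{12} = (X \stint Y) \times Z$. Its proof parallels that of Lemma~\ref{lem:diagonal}: at a generic $\omega \in \Delta_{12}$, the lattice index $[N^3 : N_{\sigma \times \tau \times \rho} + N_{\Delta_{12}}]$ collapses, via column operations that absorb $N_\rho$ into the third-factor lineality of $\Delta_{12}$, to $[N : N_\sigma + N_\tau]$; and the displacement condition in $N_\RR^3$ decouples into the displacement condition for $(\sigma, \tau)$ in $N_\RR$ with no constraint on $\rho$, since $\rho$ slides freely along the third-factor lineality of $\Delta_{12}$. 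Matching the resulting formula with~(\ref{eqn:MinksumMult}) applied to the product $(X \stint Y) \times Z$ closes the identification and hence the proof.
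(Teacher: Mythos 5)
Your proof is correct and follows the same strategy as the paper: reduce to positive multiplicities, lift to the triple product $N_\RR^3$, stably intersect $X\times Y\times Z$ with two partial diagonals, and conclude from Lemma~\ref{lem:assocLinear} together with the fact that the two partial diagonals stably intersect in the small diagonal. The difference is purely organizational. The paper makes the reduction through two displayed identities in its proof, one relating $(A\stint B)\times C$ to a stable intersection in the larger product space and one expressing compatibility of stable intersection with push-forward along a coordinate projection; your plan replaces the first with the ``padded'' identity $(X\times Y\times Z)\stint\Delta_{12} = (X\stint Y)\times Z$ (which folds the diagonal trick of Lemma~\ref{lem:diagonal} into the product identity) and replaces the second with a direct comparison of the stable intersection with $\Delta_{23}$ computed in the ambient $N_\RR^3$ versus the stable intersection with $\Delta_{12}\cap\Delta_{23}$ computed inside $\Delta_{12}\cong N_\RR^2$. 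Since $\Delta_{12}$ and $\Delta_{23}$ are transverse and $N_{\Delta_{12}}+N_{\Delta_{23}}=N^3$ with index one, the relevant projection restricts to a lattice isomorphism on $\Delta_{12}$, so your compatibility check carries exactly the same content as the paper's push-forward identity and is verified by the same second-isomorphism-theorem computation: for $N_\sigma\subset N_{\Delta_{12}}$ one has $[N^3:N_\sigma+N_{\Delta_{23}}]=[N_{\Delta_{12}}:N_\sigma+(N_{\Delta_{12}}\cap N_{\Delta_{23}})]$, and the displacement conditions decouple likewise. In short, same ideas and the same reliance on Lemma~\ref{lem:assocLinear} and partial diagonals, with the bookkeeping carried out via lattice-preserving identifications rather than explicit push-forwards.
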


\begin{proof}
First note that for tropical cycles $A,B,C$ in $N_\RR$ we have
\begin{equation}
\label{eqn:A}
(A \stint B) \times C = (A \times C) \stint (B \times N_\RR)
\end{equation}
as cycles in $N_\RR \times N_\RR$.
This follows from Definition~\ref{def:stableIntersection}. The equation holds with multiplicities since the lattice indices of Definition~\ref{def:stableIntersection} stay the same when going to the bigger lattices.
By the same argument, for a lattice $M$, a cycle $A$ in $M_\RR\times N_\RR$, a cycle $B$ in $M_\RR$ and the projection $p:M_\RR\times N_\RR\rightarrow M_\RR$, we have
\begin{equation}
\label{eqn:B}
p(A)\stint B = p(A\stint (B \times N_\RR)).
\end{equation}

Let $\pi:N_\RR\times N_\RR\rightarrow N_\RR$ and $\pi':N_\RR\times N_\RR\times N_\RR\rightarrow N_\RR\times N_\RR$ be the projections to the first coordinates and the first and last coordinates, respectively. Let $\Delta = \{(x,x) : x \in N_\RR\} $, $\Delta' =\{(x,x,x) : x \in N_\RR\}$, $\Delta_{12} = \{(x,x,y):x,y\in N_\RR\}$, and $\Delta_{13} = \{(x,y,x):x,y \in N_\RR\}$. Using the diagonal trick from Lemma~\ref{lem:diagonal},
\begin{align*}
(X \stint  Y) \stint  Z 
& = \pi((\pi((X \times  Y) \stint \Delta) \times  Z) \stint \Delta) \\ 
& = \pi(\pi'(((X \times  Y) \stint \Delta) \times  Z) \stint \Delta) \\ 
& = \pi(\pi'((X \times  Y \times  Z) \stint \Delta_{12}) \stint \Delta) \\ 
& = \pi(\pi'(((X \times  Y \times  Z) \stint \Delta_{12}) \stint \Delta_{13} )) \\ & = (\pi\circ\pi')((X \times  Y \times  Z) \stint \Delta').
\end{align*}
The third equality follows from (\ref{eqn:A}), the fourth from (\ref{eqn:B}),
while the last follows from Lemma~\ref{lem:assocLinear} and the fact that $\Delta_{12} \stint \Delta_{13} =\Delta'$.
The formula on the right hand side is clearly associative, and so is the stable intersection.
\end{proof}

\begin{proposition}
\label{prop:AR}
Our definition coincides with the Allermann--Rau intersection product of tropical cycles.  
\end{proposition}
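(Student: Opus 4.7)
The plan is to match the two definitions by reducing to the case of stable intersection with a tropical hypersurface (Cartier divisor), where the Allermann--Rau formula is directly computable, and then bootstrap to the general case using the diagonal trick that both frameworks share. Throughout, I may decompose into positive cycles using Remark~\ref{rmk:decompose}, so I will assume positive multiplicities.

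The first step is to verify the claim for a single hypersurface. Let $H$ be a tropical hypersurface in $N_\RR$, which I present as the corner locus of a piecewise linear function $\varphi$ whose slopes encode the weights of $H$. For a tropical cycle $X$ with positive multiplicities and a generic codimension-one face $\tau$ of $X\wedge H$, the Allermann--Rau multiplicity is the ``balancing defect''
\[
\sum_{\sigma\supset\tau}\mult{\sigma}{X}\,\varphi(v_{\sigma/\tau}) \;-\; \varphi\!\left(\sum_{\sigma\supset\tau}\mult{\sigma}{X}\,v_{\sigma/\tau}\right),
\]
with $\sigma$ running over facets of $X$ containing $\tau$. I would show that this agrees with $\mult{\tau}{X\stint H}$ as given by Definition~\ref{def:stableIntersection}, i.e.\ with the Minkowski-sum multiplicity of $\link_\tau X - \link_\tau H$. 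The computation localizes to $\link_\tau$ and amounts to checking that, for a generic direction $v\in N_\RR$, summing the lattice indices $[N:N_\sigma+N_{\sigma'}]$ over pairs $(\sigma,\sigma')$ with $v\in\sigma-\sigma'$ reproduces the corner-locus formula above; this is a direct lattice-index calculation in the link, essentially the classical identification of the corner locus multiplicity with the Minkowski mixed-volume style formula.

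The second step promotes this to affine linear subspaces of any codimension. Using Lemma~\ref{lem:iterate}, any codimension-$r$ tropical cycle whose support is an affine linear space (in particular the diagonal $\Delta\subset N_\RR\times N_\RR$, which is the stable intersection of the $n$ tropical hyperplanes $\{x_i=y_i\}$) can be realised as an iterated stable intersection $H_1\stint\cdots\stint H_r$ of hypersurfaces, and the iterated stable intersection $X\stint(H_1\stint\cdots\stint H_r)=(((X\stint H_r)\stint H_{r-1})\cdots)\stint H_1$ matches the corresponding iterated AR intersection by Step~1 applied repeatedly and by associativity (Theorem~\ref{thm:assoc}) on our side, together with the associativity and the Cartier-divisor iteration that are built into the AR framework.

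The third step applies the diagonal trick. By Lemma~\ref{lem:diagonal}, $X\stint Y=\pi_*((X\times Y)\stint\Delta)$, where $\pi$ identifies $\Delta$ with $N_\RR$; the Allermann--Rau product is defined in exactly the same way, as the pushforward of the intersection of $X\times Y$ with the diagonal, the latter being realised as the iterated Cartier-divisor intersection of the $n$ hyperplanes defining $\Delta$. By Steps~1 and~2 the two constructions of $(X\times Y)\stint\Delta = (X\times Y)\cdot\Delta$ coincide as weighted cycles, and since both frameworks pushforward along the same projection using the same lattice-index formula~(\ref{eqn:ST}), the equality $X\stint Y=X\cdot_{\mathrm{AR}}Y$ follows.

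The main obstacle is Step~1: reconciling the corner-locus formula of Allermann--Rau with the Minkowski-sum formula of Definition~\ref{def:stableIntersection} at a ridge. Both produce integers intrinsically attached to the local fan $\link_\tau X$ and the linear form cutting out $\link_\tau H$, but expressing one as the other requires a careful bookkeeping of the lattice indices $[N:N_\sigma+N_\tau]$ versus the primitive normals $v_{\sigma/\tau}$; the balancing condition on $X$ at $\tau$ is what makes the two expressions agree, and handling non-transverse meetings between $\link_\tau X$ and $\link_\tau H$ is the most delicate point.
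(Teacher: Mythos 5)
Your high-level strategy matches the paper's exactly: both use the diagonal trick of Lemma~\ref{lem:diagonal} together with the fact that $\Delta$ is an iterated stable intersection of hyperplanes (Lemma~\ref{lem:iterate}), to reduce the comparison to the case of intersecting a cycle $X$ with a single hyperplane. However, your proposal leaves the core of that reduction---your ``Step~1''---as an outline rather than a proof, and you yourself flag it as the main obstacle. That is a genuine gap, since the entire content of the proposition is precisely the verification that the two local multiplicity formulas agree at a ridge.

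Two further remarks on Step~1. First, you set it up in more generality than is required: after the diagonal reduction, the only Cartier divisors that ever appear are classical linear hyperplanes, say $\{\omega_1 = 0\}$ with weight $1$, cut out by the tropical polynomial $\max(0,\omega_1)$. You need not (and the paper does not) handle an arbitrary corner locus $\varphi$. Second, for that restricted case the paper's calculation is short and concrete: after taking links so that $X\stint H$ is a linear space $L\subset H$ and $X$ is a union of cones $L+\RR_{\geq 0}r^{(i)}$ with weights $m_i$, a perturbation by $\varepsilon e_1$ and formula~(\ref{eqn:transverseMult}) give $\mult{L}{X\stint H}=\sum_{r^{(i)}_1>0}m_i\,r^{(i)}_1$, which is visibly the Allermann--Rau value for $\max(0,\omega_1)$. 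If you specialize your balancing-defect formula to $\varphi=\max(0,\omega_1)$ you get the same expression (using the balancing of $X$ at $L$ to rewrite the subtracted term), so your plan does go through---but it needs to be written out, and you should restrict to linear hyperplanes so that the lattice-index bookkeeping is as simple as the paper makes it.
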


\begin{proof}
Using the diagonal trick from Lemma~\ref{lem:diagonal} and rewriting the diagonal as the stable intersection of hyperplanes, we can reduce to the case when one of the tropical cycles is a usual hyperplane $H$ with multiplicity $1$ and the other is a tropical cycle $X$.  Then both sets contain points in the support of $X$ whose link is not contained in $H$.  To compute multiplicities, suppose $H$ is defined by $\omega_1 = 0$.  By taking links if necessary, we may assume that $X \stint H$ is a linear space $L \subset H$, and that $X$ consists of $k$ cones, each of which is spanned by one of the vectors $r^{(1)}, \dots, r^{(k)}$ together with $L$.  Let $m_1, \dots, m_k$ be the multiplicities of those cones respectively.  Using the formula (\ref{eqn:transverseMult}) with perturbation $H + \varepsilon e_1$, we compute the multiplicity of $L$ in $X \stint H$ to be 
$$\sum_{i : r^{(i)}_1 > 0} m_i [N : N_H + N_{r^{(i)}+L}] = \sum_{i : r^{(i)}_1 > 0} m_i r^{(i)}_1.$$  This is easily seen to coincide with the Allermann--Rau definition of multiplicities using the tropical polynomial $\max(0,\omega_1)$ that defines $H$.
\end{proof}

Let $n$ be a positive integer.  For $k=0,1,\dots,n$, let $T^k$ be the $\QQ$-vector space of tropical codimension $k$ cycles in $N_\RR$ with rational multiplicities, where addition is the union.
Let $T$ be the direct sum $T=\oplus_{k=0}^n T^k$.  The stable intersection gives multiplication on $T$.  We have shown that the stable intersection is commutative, associative, and distributive over cycle-addition.

\begin{theorem}
The set $T$ of tropical cycles form a graded $\QQ$-algebra where addition is union and multiplication is stable intersection.
\end{theorem}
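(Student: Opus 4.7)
The plan is to verify the axioms of a graded $\QQ$-algebra one by one, appealing to the results already established in this section. Almost nothing is left to prove; the theorem is mostly a bookkeeping statement collecting the properties that have been assembled above.

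First, I would note that each $T^k$ is a $\QQ$-vector space under $\cyclesum$: the zero element is the empty cycle (or equivalently any cycle whose multiplicities are all zero), the additive inverse of $X$ is obtained by negating all of its multiplicities, and $\QQ$-scalar multiplication rescales multiplicities, all of which preserve the balancing condition. Therefore $T = \bigoplus_{k=0}^{n} T^k$ is a graded $\QQ$-vector space.

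Next, the grading is respected by stable intersection: Theorem~\ref{thm:balanced} shows that for $X \in T^k$ and $Y \in T^\ell$, the stable intersection $X \stint Y$ is again a tropical cycle of codimension $k+\ell$, so $\stint$ restricts to a map $T^k \times T^\ell \to T^{k+\ell}$ (with the convention that the zero cycle lies in every $T^j$). Commutativity $X \stint Y = Y \stint X$ is immediate from Definition~\ref{def:stableIntersection}, which is symmetric in $X$ and $Y$. Associativity is exactly Theorem~\ref{thm:assoc}. Bilinearity (distributivity over $\cyclesum$ together with $\QQ$-homogeneity) was recorded in Remark~\ref{rmk:decompose}(1): decomposing each cycle as a difference of cycles with positive multiplicities and using that the multiplicity formula in Definition~\ref{def:stableIntersection} depends linearly on each factor's multiplicities, one sees that $(aX_1 \cyclesum bX_2) \stint Y = a(X_1 \stint Y) \cyclesum b(X_2 \stint Y)$ for $a,b \in \QQ$.

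The last thing to check is the existence of a multiplicative identity, for which I would take $\mathbf{1} := N_\RR \in T^0$, the ambient space as a codimension $0$ cycle with all multiplicities equal to $1$. For any tropical cycle $X$ and a generic point $\omega \in X$ lying in the relative interior of a facet $\sigma$, $\link_\omega(\mathbf{1}) = N_\RR$ and $\link_\omega(X) - \link_\omega(\mathbf{1}) = N_\RR$, so $\omega$ lies in $\supp(X \stint \mathbf{1})$ with multiplicity $\mult{\sigma}{X} \cdot 1 \cdot [N : N_\sigma + N] = \mult{\sigma}{X}$ by~(\ref{eqn:MinksumMult}); thus $X \stint \mathbf{1} = X$. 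The main (mild) obstacle is making sure that $\cyclesum$, $\QQ$-scaling and $\stint$ all respect the equivalence relation on cycles that ignores the polyhedral structure (common refinements and zero-multiplicity facets), but this follows from the facts that stable intersection was defined on supports and generic multiplicities, and Remark~\ref{rmk:decompose}(1) already checks that $\stint$ is well-defined on cycle sums. Combining these items yields that $(T, \cyclesum, \stint)$ is a graded commutative $\QQ$-algebra, completing the proof.
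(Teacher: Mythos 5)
Your proposal is correct and follows essentially the same route as the paper, which states this theorem with no separate proof as a summary of the preceding results (Theorem~\ref{thm:balanced} for the grading, Theorem~\ref{thm:assoc} for associativity, Remark~\ref{rmk:decompose}(1) for distributivity, and the evident commutativity of Definition~\ref{def:stableIntersection}). Your explicit verification of the multiplicative identity $\mathbf{1}=N_\RR$ and of well-definedness under polyhedral refinement fills in bookkeeping details that the paper leaves implicit.
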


We will see in Section~\ref{sec:PolytopeAlgebra} that the algebra $T$ is isomorphic to the polytope algebra of McMullen.

\section{Stable intersections as tropical varieties of ideals}
\label{sec:ideals}

In this section, we interpret stable intersections of tropical varieties as tropicalizations of intersections after generic perturbations by rescaling, as we will make precise below.

Until now we have not assumed that our tropical cycles are fans, but for the rest of this paper they will be. The definition of tropical varieties below can be extended to the case where a valuation of the coefficient field is taken into account. See~\cite{MaclaganSturmfels} for details. In that setting tropical varieties need not be fans. For simplicity we consider only the fan case here.

% DOES K HAVE TO BE ALGEBRAICALLY CLOSED OR HAVE CHAR=0?
Let $\kk$ be an algebraically closed field, and $R$ be the Laurent polynomial ring of the torus $N \otimes_{\ZZ} \kk^*$, i.e.\ $R = \kk[M]$ where $M = \Hom_\ZZ(N,\ZZ)$.  For $\omega\in N_\RR$ the \emph{initial form} $\init_\omega(f)$ of $f\in R\setminus\{0\}$ is the sum of terms $cx^v$ of $f$ with $\langle v,\omega\rangle$ maximal. The \emph{initial ideal} of $I\subseteq R$ is $\init_\omega(I):=\langle\textup{in}_\omega(f):f\in I\rangle$. The {\em tropical variety} of an ideal $I$ in $R$ is 
$$
\cT(I) = \{\omega \in N_\RR: \init_\omega(I) \neq R\}.
$$
It can be given a fan structure as a subfan of the Gr\"obner fan of $I$ (after possibly a homogenization of $I$), and it is a tropical cycle with multiplicities given by 
$$\mult{\omega}{\cT(I)}:=\dim_\kk(\kk[M\cap C^\perp]/\langle\init_\omega(I)\rangle)$$
for generic points $\omega$ in a facet $C$. Equivalently the multiplicity can be defined as
$$
\mult{\omega}{\cT(I)}:= \sum_{P \in \Ass(\init_\omega(I))}\operatorname{mult}(P, \init_\omega(I)).
$$
See \cite{BJSST, MaclaganSturmfels} for more details.  If $L$ is a subspace of the lineality space of $\cT(I)$, then $\cT(I)/L$ is a tropical cycle in $(N/N_L)_\RR$ with inherited multiplicities from $\cT(I)$, and 
\begin{equation}
\label{eqn:modOutAlgebra}
\cT(I)/L = \cT(I \cap \kk[L^\perp\ \cap M])
\end{equation}
where the lattice $L^\perp \cap M$ is naturally identified with $\Hom_\ZZ(N/N_L, \ZZ)$.

\begin{lemma}
\label{lem:initialsum}
Let $I\subseteq \kk[x_1,\dots,x_n]$ be an ideal with $n\geq 1$. Let $\omega\in\RR^n$ have $\omega_1=0$. Considered as ideals in $\kk(\alpha)[x_1,\dots,x_n]$ we have
$$\textup{in}_{\omega}(\langle I\rangle +\langle x_1-\alpha \rangle)=\textup{in}_{\omega}(\langle I\rangle)+\textup{in}_{\omega}(\langle x_1-\alpha \rangle).$$
\end{lemma}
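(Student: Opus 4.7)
The plan is to verify the two containments separately. The inclusion $\init_\omega(\langle I\rangle)+\init_\omega(\langle x_1-\alpha\rangle)\subseteq\init_\omega(\langle I\rangle+\langle x_1-\alpha\rangle)$ is immediate from the monotonicity of $\init_\omega$. The content lies in the reverse inclusion, which rests on two preliminary observations. First, since $\omega_1=0$, both monomials $x_1$ and $\alpha$ of $x_1-\alpha$ carry $\omega$-weight zero, so $x_1-\alpha$ is $\omega$-homogeneous and $\init_\omega(\langle x_1-\alpha\rangle)=\langle x_1-\alpha\rangle$. Second, $x_1-\alpha$ is a non-zero-divisor modulo any ideal of $\kk(\alpha)[x_1,\ldots,x_n]$ extended from an ideal $J'\subseteq\kk[x_1,\ldots,x_n]$: viewed inside $(\kk[x]/J')[\alpha]$ it is a monic degree-one polynomial in $\alpha$, hence a non-zero-divisor, and this property descends to the localization $\kk[\alpha]\to\kk(\alpha)$. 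This will be applied to both $J=\langle I\rangle$ and $J=\init_\omega(\langle I\rangle)=\init_\omega(I)\cdot\kk(\alpha)[x]$.

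For the main step, take $f\in\langle I\rangle+\langle x_1-\alpha\rangle$ with $m:=\omega\text{-wt}(f)$, and among all decompositions $f=g+(x_1-\alpha)h$ with $g\in\langle I\rangle$, pick one minimizing $M:=\omega\text{-wt}(g)$. Since $\init_\omega$ depends only on the Gr\"obner cone containing $\omega$, we may assume $\omega\in\QQ^n$, so the relevant weights form a discrete set and the minimum is achieved. If $M\leq m$, the weight-$m$ components of $f=g+(x_1-\alpha)h$ give
\[
\init_\omega(f)=g_m+(x_1-\alpha)h_m\in\init_\omega(\langle I\rangle)+\langle x_1-\alpha\rangle,
\]
where $g_m=\init_\omega(g)$ when $M=m$ and $g_m=0$ when $M<m$. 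If instead $M>m$, weight-$M$ balance forces $(x_1-\alpha)h_M=-\init_\omega(g)\in\init_\omega(\langle I\rangle)$, and the non-zero-divisor property then yields $h_M\in\init_\omega(\langle I\rangle)$. Lifting the $\omega$-homogeneous element $-h_M$ to some $\delta\in\langle I\rangle$ with $\init_\omega(\delta)=-h_M$ produces a new decomposition $f=(g-(x_1-\alpha)\delta)+(x_1-\alpha)(h+\delta)$ whose $g$-piece has strictly smaller $\omega$-weight, contradicting minimality.

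The main obstacle is the lifting step: producing, for a prescribed $\omega$-homogeneous $p\in\init_\omega(\langle I\rangle)$ of weight $M$, an element $\delta\in\langle I\rangle$ with $\init_\omega(\delta)=p$. This can be done by writing $p=\sum_j b_j\init_\omega(g_j)$ with $g_j\in\langle I\rangle$ and $b_j\in\kk(\alpha)[x]$, retaining only the contributions of total weight $M$ so that each surviving $b_j$ becomes $\omega$-homogeneous of weight $M-\omega\text{-wt}(g_j)$, and setting $\delta:=\sum_j b_jg_j\in\langle I\rangle$; each summand then has $\omega\text{-wt}\leq M$ with weight-$M$ part $b_j\init_\omega(g_j)$, so the weight-$M$ part of $\delta$ is exactly $p$.
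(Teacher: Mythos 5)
Your proof is correct, but it takes a genuinely different route from the paper's. The paper first reduces, by polynomial division of coefficients by $\alpha - x_1$, to generators of the form $f + g\cdot(x_1-\alpha)$ with $f \in I$ (so $f$ does not involve $\alpha$), and then argues directly: since $\omega_1 = 0$, the initial form $\init_\omega(g\cdot(x_1-\alpha)) = \init_\omega(g)\cdot(x_1-\alpha)$ contains a term divisible by $\alpha$, which cannot be cancelled by the $\alpha$-free $\init_\omega(f)$; hence $\init_\omega(f + g\cdot(x_1-\alpha))$ is one of $\init_\omega(f)$, $\init_\omega(g\cdot(x_1-\alpha))$, or their sum, each visibly in the right-hand ideal. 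Your argument instead isolates the regularity of $x_1-\alpha$ as the key: you prove $x_1-\alpha$ is a non-zero-divisor modulo $\init_\omega(\langle I\rangle) = \init_\omega(I)\cdot\kk(\alpha)[x]$ (a fact you should perhaps justify, though it is standard that initial ideals commute with coefficient field extension), and then run a weight-descent on the $\langle I\rangle$-part of a decomposition $f = g + (x_1-\alpha)h$, using the non-zero-divisor property and a lifting step to strictly lower $\omega$-wt$(g)$ whenever it exceeds $\omega$-wt$(f)$. The paper's argument is shorter and more elementary; yours is a bit more machinery-heavy but makes transparent that what is really being used is that $x_1-\alpha$ is a regular element, which is the conceptual reason the lemma holds and would adapt more readily to other regular divisors.

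One small point worth tightening: you ``pick a decomposition minimizing $M$,'' but a priori the set of achievable weights $M = \omega\text{-wt}(g)$ need not be bounded below (for $\omega$ with negative entries, $\omega\cdot\NN^n$ is unbounded below), so the minimum may fail to exist. The fix is to phrase it as a descent: start from any decomposition with weight $M_0$; if $M_0 > m$ run your step to get $M_1 < M_0$, and iterate. Since $\omega\cdot\NN^n\cap(m,M_0]$ is finite for rational $\omega$, the process must terminate with some $M_k\le m$, which is all you use. With that rewording the proof is complete.
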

\begin{proof}
The inclusion $\supseteq$ is clear and we will prove $\subseteq$. The
left hand side is generated by elements of the form
$\textup{in}_{\omega}({1\over p}(f+g\cdot(x_1-\alpha)))$ with $f\in
\langle I\rangle\cap \kk[\alpha][x_1,\dots,x_n]$,
$g\in{\kk[\alpha][x_1,\dots,x_n]}$, $p\in \kk[\alpha]$. Since $p$ is a unit, we may
ignore the $1/p$ factor in our argument.

We argue that without loss of generality $f\in I$. That is, $f$ does not involve~$\alpha$. If $f\not\in I$, then consider the expression $f=\sum_i c_iF_i$ with $c_i\in \kk[\alpha][x_1,\dots,x_n]$ and $F_i\in I$ and perform polynomial division of $c_i$ modulo $\alpha-x_1$ to obtain $c_i=g_i'(x_1-\alpha)+r_i$ for some $g_i'\in \kk[\alpha][x_1,\dots,x_n]$ and $r_i\in \kk[x_1,\dots,x_n]$. We now have $f+g(x_1-\alpha)=\sum_i c_iF_i+g(x_1-\alpha)=\sum_i (g_i'(x_1-\alpha)+r_i)F_i+g(x_1-\alpha)=\sum_i r_iF_i +((\sum_i g_i'F_i)+g)(x_1-\alpha)$. Here $\sum_i r_iF_i$ indeed is in $I$. 

Consider the degree of
$f,g\cdot(x_1-\alpha)$ and $f+g\cdot(x_1-\alpha)$ in the
$\omega$ grading. Since $\textup{in}_{\omega}(g\cdot(x_1-\alpha))$ contains
$\alpha$, its terms of some $\omega$-degree cannot cancel completely
with terms of $f$, if $g\not=0$. Therefore,
$\textup{in}_{\omega}(f+g\cdot(x_1-\alpha))$ is either
$\textup{in}_{\omega}(f)$, if the degree of $f$ is highest, or
$\textup{in}_{\omega}(g\cdot(x_1-\alpha))$ if the degree of $g\cdot(x_1-\alpha)$
is highest, or
$\textup{in}_{\omega}(f)+\textup{in}_{\omega}(g\cdot(x_1-\alpha))$ if
the degrees are equal.
\end{proof}

The \emph{saturation} of an ideal $I$ in a ring $R$ by an element $f\in R$ is the ideal $(I:f^\infty):=\{g\in R : gf^m\in I \text{ for some } m\in \NN\}$.
\begin{lemma}
\label{lem:projElim}
Let $I$ be an ideal in $\kk[x_1,\dots,x_n]$.  Then
$$
\cT(I) \nsubseteq \{\omega: \omega_1 = 0\}  \iff (I : (x_1 x_2 \cdots x_n)^\infty) \cap \kk[x_1] = \{0\}.
$$
\end{lemma}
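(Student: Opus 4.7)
The contrapositive of the $(\Leftarrow)$ direction is easy. If a nonzero polynomial $p(x_1)$ lies in $(I:(x_1 \cdots x_n)^\infty) \cap \kk[x_1]$, then $p \in J := I \cdot R$ (the Laurent extension), since $p \cdot (x_1 \cdots x_n)^m \in I$ for some $m$ and $x_1 \cdots x_n$ is a unit in $R$. For any $\omega$ with $\omega_1 \neq 0$, the initial form $\init_\omega(p)$ picks out a single monomial in $x_1$ (the highest or lowest $x_1$-power, depending on the sign of $\omega_1$), hence a unit in $R$, forcing $\init_\omega(J) = R$ and $\omega \notin \cT(I)$. Therefore $\cT(I) \subseteq \{\omega_1 = 0\}$.

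For the $(\Rightarrow)$ direction, assume $(I:(x_1\cdots x_n)^\infty) \cap \kk[x_1] = 0$. Since the saturation equals $J \cap \kk[x_1,\dots,x_n]$, this is equivalent to $J \cap \kk[x_1^\pm] = 0$: clearing $x_1$-denominators of any $f \in J \cap \kk[x_1^\pm]$ by multiplying by a unit lands it in $J \cap \kk[x_1]=0$. The goal is to produce $\omega \in \cT(J) = \cT(I)$ with $\omega_1 \neq 0$, and I plan to do this by invoking the projection theorem for tropical varieties, which in this setting asserts $\pi_\RR(\cT(J)) = \cT(J \cap \kk[x_1^\pm])$ for the projection $\pi_\RR \colon \RR^n \to \RR$ onto the first coordinate. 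Since $J \cap \kk[x_1^\pm] = 0$, the right-hand side equals $\cT(0) = \RR$, so $\cT(J)$ surjects onto $\RR$ and in particular contains points with nonzero first coordinate.

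To reduce the projection theorem to standard inputs, I would extend scalars to a non-trivially valued field such as $\kk\{\{t\}\}$ (which does not change $\cT(J)$), invoke the fundamental theorem of tropical geometry to write $\cT(J) = \overline{\val(V(J))}$, and use that coordinate-wise valuation commutes with the algebraic projection. On the algebraic side, the Nullstellensatz together with the identity $\sqrt{J}\cap \kk[x_1^\pm] = \sqrt{J\cap \kk[x_1^\pm]} = 0$ gives $\overline{\pi(V(J))} = V(J \cap \kk[x_1^\pm]) = \kk^*$; by Chevalley's theorem $\pi(V(J))$ is constructible and Zariski dense, hence cofinite in $\kk^*$. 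The value group of $\kk\{\{t\}\}$ is dense in $\RR$, so one can choose $a \in \pi(V(J))$ with $\val(a) \neq 0$, lift to $(a,b_2,\dots,b_n) \in V(J)$, and obtain $(\val(a),\val(b_2),\dots) \in \cT(J)$ with nonzero first coordinate. The main obstacle is precisely this reliance on the fundamental theorem (or equivalently the tropical projection theorem), which is substantial input; in the spirit of the paper's preference for elementary proofs, an alternative would be to work directly with initial ideals, observing that $\init_{\pm e_1}(J) = R$ forces the ``leading $x_1$-coefficient ideal'' in $\kk[x_2^\pm,\dots,x_n^\pm]$ to be the unit ideal, and extracting a concrete element of $J \cap \kk[x_1^\pm]$ by tracking such leading coefficients across all weights $\omega$ with $\omega_1 \neq 0$ -- but making such a combinatorial argument complete is the principal difficulty.
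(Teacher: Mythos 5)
Your proof is correct and relies on the same key ingredient as the paper: the tropical projection formula (Maclagan--Sturmfels, Theorem 3.2.3), i.e., that the coordinate projection of $\cT(I)$ equals $\cT$ of the elimination ideal. The paper applies this and then runs a case analysis on $\overline{\pi_1(V(I))}$ (empty / finite / all of $\kk^*$), covering both directions simultaneously, whereas you split the equivalence, handling the $(\Leftarrow)$ direction by a short direct initial-form argument — observing that a nonzero $p(x_1)$ in the saturation becomes a unit in $\init_\omega(IR)$ whenever $\omega_1\neq 0$ — and invoking the projection formula only for $(\Rightarrow)$. This is a modest gain in self-containment for one direction, but substantively the two proofs coincide; you are also right that neither fully sidesteps the fundamental theorem, and your sketched alternative via ``leading-coefficient ideals'' would be the genuinely new, more elementary route if carried out.
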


\begin{proof}
Consider the projection $\pi_1$ of $(\kk^*)^n$ onto the $x_1$ axis. 
%By elimination theory, t
The variety defined by the ideal $J = (I : (x_1 x_2 \cdots x_n)^\infty) \cap \kk[x_1]$ is the Zariski closure $\overline{\pi_1(V(I))}$.  By the fundamental theorem of tropical geometry, the tropical variety of $\overline{\pi_1(V(I))}$ is the image of $\cT(I)$ under the projection onto the first coordinate axis~\cite[Theorem~3.2.3]{MaclaganSturmfels}.
There are three possibilities for $\overline{\pi_1(V(I))}$:
 
\begin{itemize}
\item Empty: LHS is false, and $J = \kk[x_1] \neq \{0\}$.
\item Finitely many points: the tropicalization of the projection is $\{0\}$, so the LHS is true, and $J$ contains a nonzero polynomial in $x_1$ vanishing on those points.
\item All of $\kk^*$: the LHS is true, and $J = \{0\}$.\qedhere
\end{itemize}

\end{proof}

Let $I$ be an ideal in $\kk[x_1,\dots,x_n]$.  For all but finitely many $c \in \kk$, we have
$$
\cT(I + \langle x_1 - c \rangle)= \cT(\langle I \rangle + \langle x_1 - \alpha \rangle)
$$
where the ideal in the right hand side is in $\kk(\alpha)[x_1,\dots,x_n]$. Indeed for a homogeneous ideal $I$, to find the right hand side a finite number of Gr\"obner basis computations are required, see~\cite{BJSST}. 
Except for a finite number of choices of $c$, 
%For generic choices of $c$ in $\kk$, i.e.\ except for a finite number of choices,
the Gr\"obner bases of $ \langle I \rangle + \langle x_1 - \alpha \rangle$ are also Gr\"obner bases for $I + \langle x_1 - c \rangle$ after substituting $\alpha$ with $c$.  

% Let $J$ be an ideal in $\kk(\alpha)[x_1,\dots,x_n]$.  

\begin{lemma}
\label{lem:slicingbinomial}
Let $I$ be an ideal in $\kk[x_1,\dots,x_n]$, and $H$ be the tropical cycle in $N_\RR$ defined by $\omega_1=0$ with multiplicity one at all points.  Then
$$
\cT(I) \stint H = \cT(\langle I \rangle + \langle x_1 - \alpha \rangle)
$$
as tropical cycles, where the ideals on the right hand side are in $\kk(\alpha)[x_1,\dots,x_n]$ and $\alpha$ is transcendental over $\kk$.
\end{lemma}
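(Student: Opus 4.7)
The plan is to match supports and multiplicities on the two sides separately.

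For supports, I apply Lemma~\ref{lem:initialsum} to obtain $\init_\omega(\langle I\rangle + \langle x_1 - \alpha\rangle) = \init_\omega(I) + \init_\omega(\langle x_1 - \alpha\rangle)$ whenever $\omega_1 = 0$. If $\omega_1 > 0$, then $\init_\omega(\langle x_1 - \alpha\rangle) = \langle x_1\rangle$ contains the Laurent-ring unit $x_1$, and if $\omega_1 < 0$ the initial form is the unit $-\alpha \in \kk(\alpha)^{*}$; in either case $\omega \notin \cT(\langle I\rangle + \langle x_1 - \alpha\rangle)$, so the right-hand side is supported in $H$. For $\omega \in \cT(I) \cap H$, the right-hand side contains $\omega$ iff $\init_\omega(I) + \langle x_1 - \alpha\rangle$ is proper in $\kk(\alpha)[M]$. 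Since $\alpha$ is transcendental over $\kk$, this is equivalent to the projection of $V(\init_\omega(I))$ to the $x_1$-axis not being contained in a finite set, which by Lemma~\ref{lem:projElim} is the same as $\cT(\init_\omega(I)) \nsubseteq H$. Identifying $\cT(\init_\omega(I))$ with $\link_\omega \cT(I)$, this is exactly the defining condition for $\omega \in \cT(I) \stint H$ from Definition~\ref{def:stableIntersection}.

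For multiplicities at a generic $\omega$ in a facet $\gamma$, I use Lemma~\ref{lem:linklineality} to pass to the link at $\omega$ and quotient out by the lineality $L = \Span \gamma$, reducing to a one-dimensional tropical fan at the origin with rays $\rho_i$ of primitive generators $u^{(i)}$ and weights $m_i$. On the algebraic side, combining Lemma~\ref{lem:initialsum} with~(\ref{eqn:modOutAlgebra}), and noting that $\Span \gamma \subset H$ forces both $\init_\omega(I)$ and $x_1 - \alpha$ to be of $\Span \gamma$-degree zero, identifies the reduced ideal as $\bar I := \init_\omega(I) \cap \kk[L^\perp \cap M]$, and the multiplicity becomes
\[
\mult{\omega}{\cT(\langle I\rangle + \langle x_1-\alpha\rangle)} = \dim_{\kk(\alpha)} \kk(\alpha)[L^\perp \cap M]/(\bar I + \langle x_1-\alpha\rangle).
\]
Applying Definition~\ref{def:stableIntersection} with the perturbation $v = e_1$ transverse to $H$, only rays with $u^{(i)}_1 > 0$ contribute, each with lattice index $[N : N_{\rho_i} + N_H] = u^{(i)}_1$, so the left-hand multiplicity equals $\sum_{i : u^{(i)}_1 > 0} m_i u^{(i)}_1$. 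The right-hand multiplicity is the generic fiber size $\delta$ of the dominant projection $\pi_1 \colon V(\bar I) \to \kk^{*}$, and the Sturmfels--Tevelev projection formula recalled just before Example~\ref{ex:ST}, combined with formula~(\ref{eqn:ST}), gives $\pi_1(\cT(\bar I)) = \delta \cdot \cT(\overline{\pi_1 V(\bar I)})$, which in turn forces $\delta = \sum_{i : u^{(i)}_1 > 0} m_i u^{(i)}_1$.

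The hardest part is coordinating the two reductions so that the link-and-quotient step on the tropical side matches the passage to $\bar I$ on the algebraic side, in particular checking that the degree-zero part of $\init_\omega(I) + \langle x_1 - \alpha\rangle$ equals $\bar I + \langle x_1 - \alpha\rangle$ in $\kk(\alpha)[L^\perp \cap M]$. Once both sides are identified with the ``first-coordinate degree'' $\sum_{i : u^{(i)}_1 > 0} m_i u^{(i)}_1$, the lemma follows.
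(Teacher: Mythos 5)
Your proposal is correct and follows essentially the same route as the paper: the support equality is obtained via Lemma~\ref{lem:initialsum} (to commute initial forms past $x_1-\alpha$) together with Lemma~\ref{lem:projElim} (to relate non-containment in $H$ to dominance of the $x_1$-projection of $V(\init_\omega I)$), and the multiplicity equality is obtained by reducing via Lemma~\ref{lem:linklineality} to a one-dimensional local picture and invoking the Sturmfels--Tevelev projection formula to match $\sum_{i: u^{(i)}_1>0} m_i u^{(i)}_1$ with the generic fiber degree. The only cosmetic difference is that where the paper runs an explicit five-statement chain of equivalences (with a clearing-denominators-and-substitute-$\alpha\mapsto x_1$ argument for one implication), you compress the same content into a transcendence/genericity argument about whether the projection image is finite.
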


\begin{proof}
Let $\omega \in \RR^n$ such that $\omega_1 = 0$.  Then the following statements are equivalent:
\begin{enumerate}
\item $\omega\in \cT(I) \stint H$
\item $\cT(\init_\omega(I))\nsubseteq H$
\item $(\init_\omega(I): (x_1 x_2 \cdots x_n)^\infty )\cap\kk[x_1]=\{0\}$
\item $\init_\omega(\langle I\rangle)+\langle x_1-\alpha\rangle$ is monomial free
\item $\omega\in\cT(\langle I \rangle + \langle x_1 - \alpha \rangle)$.
\end{enumerate}
The equivalence $(1)\Leftrightarrow (2)$ follow from the definitions of tropical varieties and stable intersection and the statement $\link_\omega(\cT(I)) = \cT(\init_\omega I)$. 
The equivalences $(4)\Leftrightarrow (5)$ and  $(2) \Leftrightarrow (3)$  follow from Lemmas \ref{lem:initialsum} and \ref{lem:projElim} respectively.  To see $(4) \Rightarrow (3)$, note that an ideal containing $x_1 - \alpha$ and a non-zero polynomial in $\kk[x_1]$ must be the unit ideal.  

%To see $(3) \Rightarrow (4)$, suppose $(4)$ does not hold, then there is a monomial $m$ such that
%$m = f + g \cdot (x_1 - \alpha)$ where $f \in \init_\omega(\langle I\rangle)$.  By changing $g$ if necessary, we may assume that $m$ does not contain $x_1$.  Expand both sides as polynomials in $x_1-\alpha$.  Then $m$ is equal to the degree $0$ term of $f + g \cdot (x_1-\alpha)$, which is the degree $0$ term of $f$, which is obtained from substituting $x_1$ with $\alpha$ in $f$.  Therefore $f$ must be a monomial multiple of a nonzero polynomial in $x_1$.

To see $(3) \Rightarrow (4)$, suppose $(4)$ does not hold, then there is a monomial $m$ such that
$m = \sum_ip_if_i + g \cdot (x_1 - \alpha)$ where $f_i\in \init_\omega(I)$, $p_i\in\kk(\alpha)$ and $g\in \kk(\alpha)[x_1,\dots,x_n]$.  After clearing  denominators, we may assume that all $p_i\in\kk[\alpha]$, $g\in \kk[\alpha,x_1,\dots,x_n]$ and $m=q\cdot x^v$ with $q\in\kk[\alpha]\setminus\{0\}$ and $v\in \NN^n$.  By substituting $\alpha$ with $x_1$ we get $q\cdot x^v = \sum_ip_if_i$ with $p_i$ and $q$ being in $\kk[x_1]$.  Hence $q\cdot x^v$ is in $\init_\omega(I)$ and $q$ is a non-zero element in $(\init_\omega(I):x_1\cdots x_n^\infty)\cap\kk[x_1]$.

Now we need to show that multiplicities coincide on a dense open subset.  By taking links and quotienting out lineality space as in Lemma~\ref{lem:linklineality} and equation (\ref{eqn:modOutAlgebra}) we can reduce to the case where $I$ is one dimensional and the intersection is just $\{0\}$.  In this case, using Definition~\ref{def:stableIntersection} of stable intersection multiplicities, we have  $$\mult{0}{\cT(I)\stint H} = \sum_{\sigma} \mult{\sigma}{\cT(I)} [N : N_\sigma + N_H]$$ where $\sigma$ runs over rays of $\cT(I)$ such that $\sigma + H$ contains a fixed generic element $(a_1,\dots,a_n)\in N_\RR$.  For a ray $\sigma$ of $\cT(I)$, let $v_\sigma$ denote the generator of $N_\sigma$.  Then 
$$
\mult{0}{\cT(I)\stint H} = \sum_{\sigma} \mult{\sigma}{\cT(I)} {v_\sigma}_1
$$
where $\sigma$ runs over all rays of $\cT(I)$ such that the first coordinates ${v_\sigma}_1$ and $a_1$ have the same sign.  The right hand side is equal to the degree of the projection of the curve $V(I)$ onto the first coordinate, by the Sturmfels--Tevelev formula for push-forward of multiplicities \cite{SturmfelsTevelev}.  See the paragraph above Example~\ref{ex:ST}.  This degree is the degree of $I + \langle x - \alpha\rangle$ for a generic $\alpha$, which is the multiplicity of the origin of $\cT(I + \langle x - \alpha \rangle)$.
\end{proof}

Let $I$ and $J$ be ideals in $\kk[x_1,\dots,x_n]$.  Let $\KK = \kk(c_1,c_2,\dots,c_n)$ be the field of rational functions in indeterminates $c_1, c_2, \dots, c_n$.
Define $I'$ to be the ideal in $\KK[x_1,\dots,x_n]$ generated by $I$ and $J'$ to be the ideal generated by the image of $J$ in $\KK[x_1,\dots,x_n]$ under the ring homomorphism $\kk[x_1,\dots,x_n] \rightarrow \KK[x_1,\dots,x_n]$ given by $x_i \mapsto c_i x_i$.

\begin{lemma}
\label{lem:gfanmultiplied}
  The change of coordinates from $J$ to $J'$ preserves the Gr\"obner fan and the tropical variety, i.e.\ 
$\Gfan(J) = \Gfan(J')$ and $\cT(J) = \cT(J')$.
\end{lemma}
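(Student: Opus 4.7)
The plan is to show that the substitution $\phi: x_i \mapsto c_i x_i$ is a ring automorphism of $\KK[x_1,\dots,x_n]$ that commutes with the operation of taking initial forms, so it preserves the combinatorial data encoded by the Gröbner fan and tropical variety.

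First I would replace $J$ by its extension $J^e := J\cdot\KK[x_1,\dots,x_n]$. Since $\KK/\kk$ is a field extension, $\Gfan(J) = \Gfan(J^e)$ and $\cT(J) = \cT(J^e)$, because Gröbner bases (and hence initial ideals and their dimensions and multiplicities) are preserved under flat base change by a field extension. This reduces the problem to comparing $J^e$ and $J'$, both of which live in $\KK[x_1,\dots,x_n]$. Note that $J' = \phi(J^e)$ where $\phi$ is the $\KK$-algebra endomorphism sending $x_i\mapsto c_i x_i$; since each $c_i\in\KK^*$, $\phi$ is in fact an automorphism with inverse $x_i\mapsto c_i^{-1}x_i$.

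The key observation is that for any $f = \sum_v a_v x^v \in \KK[x_1,\dots,x_n]$ and any $\omega\in\RR^n$,
$$
\phi(f) = \sum_v (a_v c^v)\, x^v,
$$
and the rescaling $a_v\mapsto a_v c^v$ does not change \emph{which} exponents $v$ appear with nonzero coefficient (since $c^v\in\KK^*$), nor which of those achieve the maximum value of $\langle v,\omega\rangle$. Hence $\init_\omega(\phi(f)) = \phi(\init_\omega(f))$. Applying this to a generating set of $J^e$, we obtain $\init_\omega(J') = \phi(\init_\omega(J^e))$. Because $\phi$ is an automorphism, $\init_\omega(J') = \KK[x_1,\dots,x_n]$ if and only if $\init_\omega(J^e) = \KK[x_1,\dots,x_n]$, which gives $\cT(J') = \cT(J^e) = \cT(J)$.

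For the Gröbner fan, recall that cones are determined by the equivalence relation $\omega\sim\omega'$ iff $\init_\omega(\cdot) = \init_{\omega'}(\cdot)$ on the ideal. From $\init_\omega(J') = \phi(\init_\omega(J^e))$ and the fact that $\phi$ is an automorphism, this equivalence for $J'$ coincides with the one for $J^e$, giving $\Gfan(J') = \Gfan(J^e) = \Gfan(J)$. There is no serious obstacle here; the only point requiring care is the bookkeeping in the first step, namely verifying that extending scalars from $\kk$ to $\KK$ does not alter the Gröbner fan or tropical variety, which is a standard consequence of flatness of $\KK$ over $\kk$ and the stability of initial ideal computations under such base change.
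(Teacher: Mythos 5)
Your proof is correct and rests on the same core observation as the paper's, namely that rescaling each coordinate by a nonzero scalar $c_i$ leaves the set of exponents of a polynomial and their $\omega$-weights unchanged. The paper packages this as ``Buchberger's S-pair algorithm commutes with the coordinate change, so a Gr\"obner basis maps to a Gr\"obner basis,'' whereas you express it more directly as the identity $\init_\omega\circ\phi = \phi\circ\init_\omega$ for the automorphism $\phi$, and you also make explicit the intermediate base-change step $J\rightsquigarrow J^e$ that the paper elides; both routes immediately give equality of initial ideals (up to the automorphism $\phi$) for every $\omega$, hence equality of the Gr\"obner fans and of the tropical varieties with their multiplicities.
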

The fans on the left hand sides are defined with respect to $\kk$ while those on the right are with respect to $\KK$.

\begin{proof}
Let $\prec$ be any term order.  Buchberger's S-pair algorithm for Gr\"obner bases commutes with the coordinate change $x_i \mapsto c_i x_i$, so for a Gr\"obner basis  $G$ of $J$ with respect to $\prec$, the image of $G$ under the map $x_i \mapsto c_i x_i$ forms a Gr\"obner basis of $J'$ with respect to $\prec$.
\end{proof}

%\begin{lemma}
%For any $\omega\in\RR^n$,
%$$
%\init_\omega(I' + J') = \init_\omega(I') + \init_\omega(J').
%$$
%%In particular, the Gr\"obner fan of $I' + J'$ is the common refinement of Gr\"obner fans of $I'$ and $J'$.
%\end{lemma}

%\begin{proof}
%The containment 
%$
%\init_\omega(I' + J') \supseteq \init_\omega(I') + \init_\omega(J')
%$
%is clear.
%\end{proof}

We can now prove the main theorem of this section.  A similar result can be found in the work of Osserman and Payne \cite[Proposition 2.7.8]{OssermanPayne}.

\begin{theorem}\label{thm:genericvariables} With the notation above, we have
$$\cT(I')\stint\cT(J')=\cT(I'+J').$$
\end{theorem}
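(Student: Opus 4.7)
The plan is to reduce the claim to an iterated application of Lemma~\ref{lem:slicingbinomial} via the diagonal trick of Lemma~\ref{lem:diagonal}, and then compare the resulting elimination ideal to $I'+J'$ by renaming transcendentals. First we invoke Lemma~\ref{lem:diagonal} to write $\cT(I')\stint\cT(J')=(\cT(I')\times\cT(J'))\stint\{\Delta\}$ as tropical cycles in $\RR^{2n}$, where $\Delta$ is the diagonal. Introducing fresh variables $y_1,\dots,y_n$ and letting $\tilde J'\subset\KK[x,y]$ denote the copy of $J'$ in the $y$-variables (so $\tilde J'=\langle f(c_1y_1,\dots,c_ny_n):f\in J\rangle$), we have $\cT(I')\times\cT(J')=\cT(I'+\tilde J')$ in $\RR^{2n}$.

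Next we perform the unimodular toric change of coordinates $u_i:=x_i/y_i$, $v_i:=y_i$ for $i=1,\dots,n$, which is an automorphism of the torus and induces the unimodular change of tropical coordinates $\omega^{(u)}_i=\omega_i-\omega_{n+i}$, $\omega^{(v)}_i=\omega_{n+i}$; this sends $\Delta$ to the linear subspace $\{\omega^{(u)}=0\}$. The latter is the iterated stable intersection of the $n$ coordinate hyperplanes $\{\omega^{(u)}_i=0\}$, so by Lemma~\ref{lem:iterate} we may stably intersect with each in turn, and by Lemma~\ref{lem:slicingbinomial} (applied in the $(u,v)$-coordinates, as justified by the unimodular-change argument in the proof of Lemma~\ref{lem:linklineality}) each such intersection corresponds algebraically to adjoining $u_i-\beta_i$ for a fresh transcendental $\beta_i$. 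Expressing $u_i-\beta_i$ back in the original coordinates as the binomial $x_i-\beta_iy_i$, we obtain
$$
(\cT(I'+\tilde J'))\stint\Delta=\cT\bigl(I'+\tilde J'+\langle x_i-\beta_iy_i:i=1,\dots,n\rangle\bigr)
$$
as tropical cycles in $\RR^{2n}$ over $\KK(\beta_1,\dots,\beta_n)$.

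To identify $\Delta$ with $\RR^n$, we project onto the first $n$ coordinates, which algebraically amounts to eliminating the $y$-variables; since the binomials $x_i-\beta_iy_i$ give $y_i=x_i/\beta_i$, this elimination is a bijection on the tori, so the tropical variety of the elimination ideal equals the projection of the cycle. The elimination ideal is $I+\langle f(c_1x_1/\beta_1,\dots,c_nx_n/\beta_n):f\in J\rangle$ in $\KK(\beta)[x]$. Setting $d_i:=c_i/\beta_i$ yields algebraically independent transcendentals over $\kk$, and via the isomorphism $\kk(d)\cong\kk(c)$ fixing $\kk$ and sending $d_i\mapsto c_i$ this ideal is isomorphic to $I+J^{(c)}=I'+J'$. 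Since purely transcendental scalar extension (from $\kk(d)$ to $\kk(d,\beta)=\kk(c,\beta)$) preserves Gr\"obner bases and hence tropical varieties with multiplicities, the equality $\cT(I')\stint\cT(J')=\cT(I'+J')$ follows.

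The main obstacle will be the second step at the level of multiplicities: we must verify that tracking the Sturmfels--Tevelev pushforward through $n$ successive hyperplane intersections produces exactly the augmented ideal stated, with the correct multiplicities. Fortunately the coordinate changes act on disjoint pairs of variables so commutativity is clean, but the multiplicity bookkeeping through each application of Lemma~\ref{lem:slicingbinomial} deserves careful attention. A secondary point is justifying the elimination step, which is straightforward once one observes via the Sturmfels--Tevelev formula that the projection is birational (in fact bijective via $y_i=x_i/\beta_i$).
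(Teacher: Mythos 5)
Your argument is correct and follows essentially the same route as the paper: diagonal trick (Lemma~\ref{lem:diagonal}), unimodular change to coordinate hyperplanes, iterated application of Lemma~\ref{lem:slicingbinomial} via Lemma~\ref{lem:iterate}, and finally elimination to identify $\Delta$ with $N_\RR$. The one place you diverge is that you do not invoke Lemma~\ref{lem:gfanmultiplied} to first replace $\cT(I')\stint\cT(J')$ by $\cT(I)\stint\cT(J)$; instead you keep $I'$ and $J'$ throughout, which forces you to introduce a second batch of transcendentals $\beta_i$ and then absorb them via the renaming $d_i=c_i/\beta_i$ together with the field automorphism $\kk(d)\cong\kk(c)$ and a scalar-extension argument. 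The paper's use of Lemma~\ref{lem:gfanmultiplied} avoids exactly this extra bookkeeping: once $\cT(I')\stint\cT(J')=\cT(I)\stint\cT(J)$ over $\kk$, the fresh transcendentals produced by the iterated slicing can simply be taken to be (reciprocals of) the $c_i$'s, and the binomials $c_ix_i-X_i$ appear directly, so elimination lands immediately on $I'+J'$ with no renaming. Both arguments are sound; yours is a bit more laborious at the final step but equally valid, and in fact you are more explicit than the paper about the elimination and multiplicity steps, which the paper compresses into the phrase ``after identification of $N_\RR$ with the diagonal.''
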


\begin{proof}
Using Lemmas~\ref{lem:gfanmultiplied}, \ref{lem:diagonal}, \ref{lem:slicingbinomial} and associativity of stable intersection we get $\cT(I')\stint\cT(J')=\cT(I)\stint\cT(J)=(\cT(I)\times\cT(J))\stint\Delta=\cT(I'' + J'' +\langle c_1x_1-X_1,\dots,c_nx_n-X_n\rangle) = \cT(I'+J')$ where $I''$ is the ideal generated by $I$ in $\kk[x_1,\dots,x_n,X_1,\dots,X_n]$ and $J''$ the ideal generated by $J$ after substituting $X_i$ for $x_i$. The second and fourth equalities are after identification of $N_\RR$ with the diagonal in $N_\RR\times N_\RR$.
%Both fans are subfans of $\cT(I') \cap \cT(J')$.  Let $\sigma$ and $\tau$ be Gr\"obner cones in $\cT(I')$ and $\cT(J')$ respectively.  Then
\end{proof}

For some problems such as computation of resultants studied in \cite{tropRes}, we want to compute the Newton polytope of a polynomial after substituting one or more variables with generic constants.  This amounts to projecting the Newton polytope onto the coordinate subspace of the remaining variables.  By the following theorem, we can perform this operation on tropical hypersurfaces, as orthogonal projection of a polytope onto a linear space is equivalent to stably intersecting the tropical hypersurface of the polytope with the linear space.  In order to work with rational polytopes that are not integral, we allow multiplicities to be rational.
The tropical hypersurface $\cT(P)$ of a rational polytope $P\subseteq\QQ^n$ is then defined to be the set of normal cones of $P$ of dimension at most $n-1$. 
The multiplicity assigned to a normal cone $C$ of an edge $e$ of $P$ is the \emph{lattice length} of $e$ i.e. the rational number $\|e\|/\|p\|$ where $p$ is a generator for $C^\perp\cap \ZZ^n$.
%Because the oriented edges of polygon sum to zero, $\cT(P)$ is balanced around every ridge.
The tropical hypersurface $\cT(P)$ is balanced around every ridge $R$ because the oriented edges of the two dimensional face of $P$ dual to $R$ sum to zero.

\begin{theorem}
Let $P\subset \QQ^n$ be a polytope, $L \subset \QQ^n$ be a rational linear subspace, and $\pi : \QQ^n \rightarrow L$ be the orthogonal projection.  Then 
$$\cT(\pi(P))=(\cT(P)\stint L) \minksum L^\perp.$$
\end{theorem}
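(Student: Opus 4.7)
The plan is to verify the equality first on supports and then on multiplicities at codimension-one cones. By Lemma~\ref{lem:computingStableIntersections}, $\supp(\cT(P)\stint L)=\bigcup_\tau(\tau\cap L)$, with $\tau$ running over codimension-one cones of the normal fan $N(P)$ satisfying $\tau+L=\QQ^n$. Combining this with the classical fact that, inside $L$, the normal fan $N(\pi(P))$ is the common refinement of $\{\tau\cap L:\tau\in N(P)\}$, shows these $\tau\cap L$ are exactly the codimension-one cones of $N(\pi(P))$ in $L$. Adding $L^\perp$ then recovers the codimension-one skeleton of $N(\pi(P))$ in $\QQ^n$, which is $\cT(\pi(P))$.

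For multiplicities, fix a codimension-one cone $\sigma$ of $\cT(P)\stint L$ dual to an edge $e$ of $\pi(P)$, and let $F$ be the minimal face of $P$ with $\pi(F)=e$. The codimension-one cones $\tau'$ of $N(P)$ containing $\sigma$ correspond bijectively to the edges $e_{\tau'}$ of $F$. By Definition~\ref{def:stableIntersection} and formula~(\ref{eqn:MinksumMult}), for a generic $v\in\QQ^n$ whose $L^\perp$-component has a chosen sign,
\[
\mult{\sigma}{\cT(P)\stint L}=\sum_{\tau'}\ell(e_{\tau'})\,[\ZZ^n:N_{\tau'}+N_L]
\]
where the sum is over those $\tau'$ with $v\in\link_\sigma\tau'-L$, equivalently the edges of $F$ on the corresponding ``side'' of $F$ relative to $L^\perp$. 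We must show this equals the lattice length $\ell(e)=\mult{\sigma+L^\perp}{\cT(\pi(P))}$.

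The required equality follows from the polytopal balancing of $F$: orient the edges of $\partial F$ so that their signed sum vanishes, project to $L$, and observe that on each side the projected edges add (with lattice weights) to the primitive direction of $e$ scaled by $\ell(e)$. The per-edge identity $\ell(e_{\tau'})\,[\ZZ^n:N_{\tau'}+N_L]=\ell(\pi(e_{\tau'}))$ is a direct primitive-vector calculation. The main obstacle is the case $\codim L>1$, where $F$ may have large dimension and many edges per side; I would first reduce to the hyperplane case via Lemma~\ref{lem:iterate} and Lemma~\ref{lem:assocLinear}, prove the claim when $L$ is a rational hyperplane (so $F$ is a $2$-face and balancing is the fact that a polygon closes up), and then iterate.
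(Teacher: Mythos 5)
Your set-theoretic argument is sound and is essentially the same content as the paper's, packaged differently: the paper's chain of equivalences ($\omega\in\cT(P)\stint L\iff P^\omega$ has an edge not parallel to $L^\perp\iff(\pi P)^\omega$ is not a point) is in effect an inline proof of the fact you cite, that the normal fan of $\pi(P)$ in $L$ is the restriction $\{\sigma\cap L:\sigma\in N(P)\}$. (Calling it a ``common refinement'' is slightly imprecise; it is the restriction.) So on supports your route is fine.

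Your multiplicity argument has a genuine gap, and it is worth knowing that the paper's own proof never touches multiplicities at all---the displayed chain of equivalences is purely set-theoretic, and the reduction ``it suffices to show $\cT(\pi(P))\cap L=\cT(P)\stint L$'' is the only step that is valid at the level of weighted cycles. The specific error is the per-edge identity $\ell(e_{\tau'})\,[\ZZ^n:N_{\tau'}+N_L]=\ell(\pi(e_{\tau'}))$. Take $n=2$, $L=\QQ(1,1)$, $P=[0,1]^2$, and let $e_{\tau'}$ be the top edge $[(0,1),(1,1)]$ with $\tau'=\RR_{\geq0}e_2$. Then $\ell(e_{\tau'})=1$, $N_{\tau'}+N_L=\ZZ e_2+\ZZ(1,1)=\ZZ^2$ so the index is $1$, while $\pi(e_{\tau'})=[(\tfrac12,\tfrac12),(1,1)]$ has lattice length $\tfrac12$ with respect to the primitive vector $(1,1)$ of $L\cap\ZZ^2$. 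So the left side is $1$ and the right side is $\tfrac12$. What is true is that $[\ZZ^n:N_{\tau'}+N_L]=[\ZZ:\langle u,L\cap\ZZ^n\rangle]$, which records the lattice length of $\pi(e_{\tau'})$ with respect to the \emph{push-forward} lattice $\pi(\ZZ^n)$, not with respect to $L\cap\ZZ^n$; these two lattices in $L$ differ by the finite index $[\pi(\ZZ^n):L\cap\ZZ^n]$, and they coincide exactly when $\ZZ^n=(L\cap\ZZ^n)\oplus(L^\perp\cap\ZZ^n)$ (e.g.\ $L$ a coordinate subspace). Your balancing step is fine, and it does show that the weights on one side of $F$ sum correctly; it is the per-edge normalization that is off by this index. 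Reducing to hyperplanes via Lemmas~\ref{lem:iterate} and \ref{lem:assocLinear} does not help, since the same index already appears for a rational hyperplane $L$ (try $L=(1,1)^\perp$ in $\QQ^2$).

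In fact your corrected computation exposes a discrepancy in the theorem itself as literally stated: in the example above, $\mult{0}{\cT(P)\stint L}=2$ (it equals $\operatorname{MV}(P,Q)$ with $Q=[(0,0),(1,-1)]$), while the paper's convention $\|e\|/\|p\|$ with $p$ a generator of $C^\perp\cap\ZZ^n$ gives $\mult{0}{\cT(\pi(P))}=1$. So the identity of weighted cycles holds only if $\cT(\pi(P))$ is normalized to the push-forward lattice $\pi(\ZZ^n)$ rather than $L\cap\ZZ^n$, or if one restricts to $L$ for which these coincide. Since the motivating application (specialization/elimination) takes $L$ to be a coordinate subspace, this distinction is invisible there, but your proof should either carry the lattice $\pi(\ZZ^n)$ consistently or note explicitly that multiplicities match only under that convention.
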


\begin{proof}
Since $L^{\perp}$ is contained in the lineality space of both sides of the equation, it suffices to show that
$$
\cT(\pi(P)) \cap L = \cT(P) \stint L.
$$
For $\omega \in \QQ^n$, let $P^\omega$ denote the face of $P$ supported by the hyperplane whose normal vector pointing away from $P$ is $\omega$.  Then $\link_\omega(\cT(P)) = \cT(P^\omega)$.    For $\omega \in L$, we also have $(\pi(P))^\omega = \pi(P^\omega)$, and
\begin{align*}
\omega \in \cT(P) \stint L &\iff \link_\omega \cT(P) \minksum L = \QQ^n \\
&\iff \cT(P^\omega) \minksum L = \QQ^n \\
&\iff \cT(P^\omega) \textup{ contains a cone $C$ such that  } \textup{dim}(L+C)=n \\
&\iff P^\omega \textup{ contains an edge not in } L^\perp\\
& \iff (\pi(P))^\omega \text{ contains at least two distinct points}\\
& \iff \omega \in \cT(\pi(P)).
\end{align*}
\end{proof}
The normal fan of the projection of a polytope onto a linear subspace is the restriction of the normal fan to the linear subspace \cite[Chapter 7]{Zie}.

\section{Volume, mixed volume, and Bernstein's Theorem}
\label{sec:Bernstein}

In this section we study stable intersections of hypersurfaces of polytopes. 
For polytopes $P_1, P_2, \dots, P_n$ in $\QQ^n$, the stable intersection of their hypersurfaces  $\cT(P_1) \stint \cdots \stint \cT(P_n)$ is either empty or consists only of the origin.  

% \begin{definition}
% The {\em mixed volume} of $P_1, P_2, \dots, P_n$ is the multiplicity of the origin in the stable intersection $\cT(P_0) \stint \cT(P_1) \stint \cdots \stint \cT(P_n)$ if it is not empty and $0$ otherwise.
% \end{definition}

%The non-zeroness of mixed volume can be checked in polynomail time \cite{tropRes}.

\begin{lemma}
\label{lem:polytopesum}
Let $P$ and $Q$ be polytopes such that $P \cup Q$ is also a polytope.  Then 
$(P\cup Q) \minksum (P \cap Q) = P \minksum Q$,
where $\minksum$ denotes the Minkowski sum.
\end{lemma}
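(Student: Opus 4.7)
The plan is to prove the two inclusions separately. The easy direction $(P\cup Q)\minksum(P\cap Q)\subseteq P\minksum Q$ is immediate: if $u\in P\cup Q$ and $v\in P\cap Q$, then either $u\in P$, in which case $u+v\in P\minksum Q$ since $v\in Q$, or $u\in Q$, in which case $u+v\in Q\minksum P=P\minksum Q$ since $v\in P$.

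For the reverse inclusion, fix $x\in P$ and $y\in Q$; I will exhibit $u\in P\cup Q$ and $v\in P\cap Q$ with $u+v=x+y$ by a one-dimensional segment argument. Parametrize the segment from $x$ to $y$ by $\gamma(t)=(1-t)x+ty$ for $t\in[0,1]$, and set
\[
A=\{t\in[0,1]:\gamma(t)\in P\},\qquad B=\{t\in[0,1]:\gamma(t)\in Q\}.
\]
Since $P$ and $Q$ are convex, $A$ and $B$ are closed subintervals of $[0,1]$, with $0\in A$ and $1\in B$; so $A=[0,a]$ and $B=[b,1]$ for some $a,b\in[0,1]$. The hypothesis that $P\cup Q$ is a polytope, hence convex, forces the entire segment $\gamma([0,1])$ to lie in $P\cup Q$, which means $A\cup B=[0,1]$. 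If we had $a<b$ then $(a,b)\subseteq[0,1]\setminus(A\cup B)$, a contradiction; hence $b\leq a$ and $A\cap B\supseteq[b,a]\neq\emptyset$.

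Now choose any $t_0\in A\cap B$ and put $v:=\gamma(t_0)$ and $u:=\gamma(1-t_0)$. Then $v\in P\cap Q$ by the choice of $t_0$, and $u\in P\cup Q$ because $1-t_0\in[0,1]=A\cup B$. A direct calculation gives
\[
u+v=\bigl(t_0 x+(1-t_0)y\bigr)+\bigl((1-t_0)x+t_0 y\bigr)=x+y,
\]
establishing $x+y\in(P\cup Q)\minksum(P\cap Q)$. The only nontrivial point in this argument is the assertion $A\cup B=[0,1]$, which is exactly where the convexity of $P\cup Q$ is used; the rest is elementary convex geometry, and no further use of the polytope structure is needed.
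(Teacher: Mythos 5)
Your proof is correct, and it takes a genuinely different route from the paper's. The paper works with support functions: it first shows $\{x\cdot v : v\in P\cap Q\}=\{x\cdot v:v\in P\}\cap\{x\cdot v:v\in Q\}$ (via a segment argument much like yours), deduces $(P\cap Q)(x)=\min(P(x),Q(x))$, and concludes by noting that $\max+\min=\mathrm{sum}$ together with the fact that closed convex bodies are determined by their support functions. You instead prove the two inclusions directly, and for the nontrivial one you parametrize the segment from $x\in P$ to $y\in Q$ and reflect the parameter through its midpoint to produce an explicit decomposition $x+y=u+v$ with $u\in P\cup Q$ and $v\in P\cap Q$. Both proofs hinge on the same one-dimensional observation (the intervals where the segment meets $P$ and where it meets $Q$ must overlap because $P\cup Q$ is convex and closed). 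The paper's approach is shorter once one is willing to invoke the uniqueness of support functions; yours is more elementary and constructive, and it makes visible exactly which pair $(u,v)$ realizes each element of $P\minksum Q$, at the cost of writing out both inclusions. Neither argument uses the polytope structure beyond closedness and convexity, so, like the paper's, your proof actually shows the statement for arbitrary closed convex sets.
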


\begin{proof}
For a closed convex body $K \subset \QQ^n$, let $K(x)$ denote its support function, i.e.\ for any $x \in \QQ^n$, $K(x) = \max\{x\cdot v: v \in K\}$. We claim that 
$$\{x\cdot v:v\in P\cap Q\}=\{x\cdot v:v\in P\}\cap\{x\cdot v:v\in Q\}$$
An element on the left hand side is clearly on the right. Conversely, suppose $v'\in P$ and $v''\in Q$ such that $x\cdot v'=x\cdot v''$ is an element on the right hand side. Then $\textup{conv}(\{v',v''\})\subseteq \textup{conv}(P\cup Q)=P\cup Q$, implying that the closed line segments $\textup{conv}(\{v',v''\})\cap P$ and $\textup{conv}(\{v',v''\})\cap Q$ cover $\textup{conv}(\{v',v''\})$. Hence the two line segments have a common point $v\in \textup{conv}(\{v',v''\})\cap P\cap Q$. Because $v\in P\cap Q$ and $x\cdot v=x\cdot v'=x\cdot v''$, the value $x\cdot v'$ is also in the left hand side.

Taking maximum on both sides, using that the right hand side is a non-empty intersection of closed intervals in $\RR$, we get that $(P \cap Q)(x)=\min(P(x),Q(x))$. Therefore \begin{align*}((P\cup Q) \minksum (P \cap Q))(x) & = (P\cup Q)(x) + (P \cap Q)(x) \\ & = \max(P(x),Q(x)) + \min(P(x),Q(x)) \\ & = P(x)+Q(x) \\ &= (P\minksum Q)(x). \end{align*} Since closed convex bodies are uniquely determined by their support functions, we get $(P \cup Q) \minksum (P \cap Q) = P \minksum Q$.  
\end{proof}

For a polytope $P$ and a non-negative integer $r$, let $\cT^r(P)$ denote the stable intersection of $\cT(P)$ with itself $r$ times. In particular, $\cT^0(P)=\RR^n$ and $\cT^1(P)=\cT(P)$.  Recall that $\cyclesum$ denotes the sum of tropical cycles of the same dimension, which is taking the union of supports and adding the multiplicities on the overlap.

\begin{proposition}
\label{prop:freshmansDream}
Let $P$ and $Q$ be polytopes in $\QQ^n$ such that $P \cup Q$ is also a polytope.  Then 
\begin{enumerate}
\item $\cT(P\cup Q) \stint \cT(P \cap Q) = \cT(P) \stint \cT(Q)$ 
\item  $\cT^k(P\cup Q) \cyclesum \cT^k(P \cap Q) = \cT^k(P) \cyclesum \cT^k(Q)$ for any $k \geq 0$
\end{enumerate}
\end{proposition}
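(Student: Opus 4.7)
The plan is to first derive a hypersurface-level identity from Lemma~\ref{lem:polytopesum}, then prove~(1) by induction on the ambient dimension~$n$ using a local analysis via links, and finally deduce~(2) for every $k$ from~(1) by Newton's power-sum identities.

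For the first step, $\cT$ is additive with respect to Minkowski sum: for polytopes $A, B$, we have $\cT(A \minksum B) = \cT(A) \cyclesum \cT(B)$, since the normal fan of $A \minksum B$ is the common refinement of the normal fans of $A$ and $B$ and lattice edge lengths are additive along parallel edges. Applied to Lemma~\ref{lem:polytopesum}, this yields
\[
\cT(P \cup Q) \cyclesum \cT(P \cap Q) \;=\; \cT(P \minksum Q) \;=\; \cT(P) \cyclesum \cT(Q), \qquad (\star)
\]
which is~(2) for $k = 1$ and supplies the sum relation needed later.

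To prove~(1) I would induct on~$n$, with $n = 1$ trivial because codimension two is vacuous. At any $\omega \in N_\RR$, Lemma~\ref{lem:linklineality} reduces the identity locally to
\[
\cT\bigl((P \cup Q)^\omega\bigr) \stint \cT\bigl((P \cap Q)^\omega\bigr) \;=\; \cT(P^\omega) \stint \cT(Q^\omega),
\]
where $X^\omega$ denotes the face of $X$ maximizing $\omega$. Using the identities $(P \cup Q)(\omega) = \max(P(\omega), Q(\omega))$ and $(P \cap Q)(\omega) = \min(P(\omega), Q(\omega))$ from the proof of Lemma~\ref{lem:polytopesum}, I split into two cases. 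When $P(\omega) \ne Q(\omega)$, the unordered pairs of faces $\{(P \cup Q)^\omega, (P \cap Q)^\omega\}$ and $\{P^\omega, Q^\omega\}$ coincide and the identity is immediate. When $P(\omega) = Q(\omega)$, we have $(P \cup Q)^\omega = P^\omega \cup Q^\omega$ (convex as a hyperplane slice of the polytope $P \cup Q$) and $(P \cap Q)^\omega = P^\omega \cap Q^\omega$ all lying in a common affine hyperplane, so after quotienting out the $\omega$-direction the identity becomes~(1) in dimension $n - 1$. For $\omega \ne 0$ and $n \ge 3$ this closes the induction, because facets of a codimension-two tropical cycle in $N_\RR$ have positive dimension and their relative interiors miss the origin. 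The remaining case is $n = 2$, where both sides of~(1) are $0$-dimensional cycles supported at the origin; for this base case I would compute the multiplicity via the fan displacement rule, recognize $\mult{0}{\cT(X) \stint \cT(Y)} = \vol(X \minksum Y) - \vol(X) - \vol(Y)$ for planar polytopes $X, Y$, and finish using Lemma~\ref{lem:polytopesum} together with planar inclusion-exclusion of areas.

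Given~(1) and~$(\star)$, I would derive~(2) for all $k$ by Newton's identities in the commutative $\QQ$-algebra of tropical cycles. Setting $A = \cT(P \cup Q)$, $B = \cT(P \cap Q)$, $C = \cT(P)$, $D = \cT(Q)$, the two pairs $(A,B)$ and $(C,D)$ share their elementary symmetric functions: $e_1 = A \cyclesum B = C \cyclesum D$ by $(\star)$ and $e_2 = A \stint B = C \stint D$ by~(1). The Newton recursion $p_k = e_1 \stint p_{k-1} \ominus e_2 \stint p_{k-2}$, with initial values $p_0 = 2$ and $p_1 = e_1$, expresses the power sums entirely in terms of $e_1$ and $e_2$, so $A^k \cyclesum B^k = p_k = C^k \cyclesum D^k$ for every $k \ge 0$, which is exactly~(2). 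I expect the main obstacle to be the planar base case of~(1): the link induction has nothing smaller to reduce to at the origin, forcing one to establish the volume formula for the top-codimensional planar tropical intersection---essentially Bernstein in dimension two---independently.
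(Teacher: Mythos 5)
Your proposal is correct and follows essentially the same route as the paper: both reduce~(1) by taking links at a codimension-two cone, split on the dichotomy $P(\omega)=Q(\omega)$ versus $P(\omega)\ne Q(\omega)$, and ground the comparison in a planar mixed-volume computation $\mult{0}{\cT(X)\stint\cT(Y)}=\vol(X\minksum Y)-\vol(X)-\vol(Y)$ combined with Lemma~\ref{lem:polytopesum} and area inclusion-exclusion. Your derivation of~(2) via the two-variable Newton recursion $p_k = e_1\stint p_{k-1}\ominus e_2\stint p_{k-2}$ is exactly the paper's induction (``multiply the $k{-}2$ and $k{=}1$ identities and cancel'' is Newton's identity in disguise), so the content is identical; the only cosmetic difference is that you phrase the reduction to the planar case as an induction on $n$, whereas the paper quotients to dimension two in a single step.
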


In the language of polytope algebra \cite{polytopeAlgebra}, the second part says that $\cT^k$ is a {\em valuation} for every $k \geq 0$.

\begin{proof} Let $R = P\cup Q$ and $S = P \cap Q$.  Since codimension adds under stable intersection, $\cT(P) \stint \cT(Q)$ is a codimension~$2$ tropical cycle. It is a subfan of $\cT(P) \cap \cT(Q)$, which has a fan structure derived from the normal fan of the polytope $P+Q$.  Similarly, $\cT(R) \stint \cT(S)$ is a codimension~$2$ subfan of the normal fan of $R+S$.  We also have $P+Q = R+S$ by Lemma~\ref{lem:polytopesum}.

For a codimension-2 cone $\sigma$ in the normal fan of $P+Q$,  $\dim(P^\sigma \minksum Q^\sigma)=2$.  In addition, $\sigma$ is in $\cT(P)\stint \cT(Q)$ if and only if $\dim(P^\sigma) \geq 1$ and $\dim(Q^\sigma) \geq 1$; similarly for $\cT(R)\stint \cT(S)$.  If $P(\omega) \neq Q(\omega)$ for $\omega$ in the relative interior of $\sigma$, then $\{P^\sigma , Q^\sigma\} = \{R^\sigma, S^\sigma\}$.  If $P(\omega) = Q(\omega)$, then $R^\sigma = P^\sigma \cup Q^\sigma$ and $S^\sigma = P^\sigma \cap Q^\sigma$.  In any case, because $\dim(R^\sigma)\leq\textup{max}(\dim(P^\sigma),\dim(Q^\sigma))$, it follows that $\sigma \in \cT(P)\stint \cT(Q)$ if and only if $\sigma \in \cT(R)\stint \cT(S)$.

To compare multiplicities, by taking the link at $\sigma$ and quotienting out by the lineality space, by Lemma~\ref{lem:linklineality}, we reduce the problem to the case when $P,Q,R,S$ lie in a two-dimensional plane.  By equation~\ref{eqn:perturbedMult}, the multiplicity at $0$ of two tropical hypersurfaces $\cT(P)$ and $\cT(Q)$ in the plane can be computed by translating one of them generically and adding up the intersection multiplicities.  By examining the dual (mixed) subdivision, we have
\begin{align*}
\mult{\sigma}{\cT(P) \stint \cT(Q)} & = \vol(P^\sigma \minksum Q^\sigma) - \vol(P^\sigma) - \vol(Q^\sigma), \text{ and}\\
\mult{\sigma}{\cT(R) \stint \cT(S)} & = \vol(R^\sigma \minksum S^\sigma) - \vol(R^\sigma) - \vol(S^\sigma).
\end{align*}
See \cite[Figures 3.1 and 9.7]{Sturmfels02} for pictures of tropical plane curves and the dual subdivisions.
To see that the two quantities on the right are equal we apply Lemma~\ref{lem:polytopesum} and the observations of the previous paragraph to get $\vol(P^\sigma \minksum Q^\sigma)=\vol(R^\sigma \minksum S^\sigma)$. Furthermore, $\vol(P^\sigma) + \vol(Q^\sigma)=\vol(R^\sigma) + \vol(S^\sigma)$ either because $\{P^\sigma , Q^\sigma\} = \{R^\sigma, S^\sigma\}$ or because $\vol(P^\sigma) + \vol(Q^\sigma)=\vol(P^\sigma\cup Q^\sigma) + \vol(P^\sigma\cap Q^\sigma)$. We conclude that multiplicities are equal.
\smallskip

To prove the statement (2), we proceed by induction on $k$. Since $\cT^0(P) = \RR^n$ for all $P$, the assertion is true for $k=0$. For $k=1$, it follows from Lemma~\ref{lem:polytopesum} and the fact that $\cT(P \minksum Q) = \cT(P) \cyclesum \cT(Q)$ for any polytopes $P$ and $Q$. Suppose $k \geq 2$.  Let $p = \cT(P)$, $q=\cT(Q)$, $r = \cT(P\cup Q)$, and $s = \cT(P\cap Q)$.  We have $r^{k-2} \cyclesum s^{k-2} = p^{k-2} \cyclesum q^{k-2}$ by the inductive hypothesis, and $rs = pq$ from part $(1)$.   Multiplying them gives $r^{k-1} s \cyclesum  r s^{k-1} = p^{k-1} q \cyclesum p q^{k-1}$.  Combining this with $(r\cyclesum s)(r^{k-1}\cyclesum s^{k-1}) = (p\cyclesum q)(p^{k-1}\cyclesum q^{k-1})$, which follows from the inductive hypothesis for $1$ and $k-1$, gives the desired identity $r^k \cyclesum s^k = p^k \cyclesum q^k$.
\end{proof}

\begin{theorem}
\label{thm:volume}
Let $P$ be a rational polytope in $\QQ^n$ and $\cT(P)$ be its tropical hypersurface.  Then 
$$
\vol(P) = \mult{0}{\cT^{\dim P}(P)}
$$
where $\vol$ denotes the $\dim(P)$-dimensional volume normalized to the integer lattice parallel to the affine span of $P$.
\end{theorem}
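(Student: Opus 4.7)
The plan is to identify both sides of the equation as translation-invariant valuations on rational polytopes, positively homogeneous of degree $\dim P$ under scaling, and then reduce the verification to the case of a single lattice $n$-simplex via triangulation.

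By the reduction in Lemma~\ref{lem:linklineality}(2), I would first pass to the quotient by the orthogonal complement $L$ of the affine span of $P$, which lies in the lineality space of $\cT(P)$. Since $\vol$ is normalized to the affine span of $P$, we may assume $\dim P = n$. Both functions $P \mapsto \vol(P)$ and $P \mapsto \mult{0}{\cT^n(P)}$ are translation-invariant, because translation does not alter the normal fan or the edge lattice lengths of $P$. They are also positively homogeneous of degree $n$: for $\lambda \in \QQ_{>0}$ we have $\cT(\lambda P) = \lambda \cdot \cT(P)$ as weighted cycles, hence $\cT^n(\lambda P) = \lambda^n \cT^n(P)$. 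The valuation identity $f(P \cup Q) + f(P \cap Q) = f(P) + f(Q)$ (valid whenever $P \cup Q$ is a polytope) is classical for $\vol$, and on the tropical side follows from Proposition~\ref{prop:freshmansDream}(2) by taking $\mult{0}{\cdot}$, which is linear on the cycle sum $\cyclesum$.

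With these properties in place, I would triangulate $P$ into lattice $n$-simplices $\Delta_1, \dots, \Delta_m$ and apply iterated inclusion--exclusion. Any proper intersection $\Delta_I := \bigcap_{i \in I} \Delta_i$ with $|I| \geq 2$ has dimension strictly less than $n$, and such $Q$ satisfies $\vol(Q) = 0$ trivially as well as $\mult{0}{\cT^n(Q)} = 0$ for the following reason: $\cT(Q)$ has lineality space of dimension $n - \dim Q > 0$, so by Lemma~\ref{lem:linklineality}(2) we may quotient out by this lineality, after which Theorem~\ref{thm:balanced} forces the $n$-fold self-intersection to vanish in an ambient space of dimension less than $n$. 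Consequently both sides telescope to sums over the top-dimensional simplices, and it suffices to prove the theorem for a single lattice $n$-simplex.

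For a lattice $n$-simplex $\Delta = \conv(0, v_1, \dots, v_n)$, the theorem reduces to $\mult{0}{\cT^n(\Delta)} = |\det(v_1, \dots, v_n)|$. I would verify this by direct computation using the fan displacement rule~\eqref{eqn:perturbedMult}: perturb $n$ independent copies of $\cT(\Delta)$ by small generic translation vectors, enumerate the tuples of codimension-$1$ cones of $\cT(\Delta)$ -- equivalently, $n$-tuples of edges of $\Delta$ -- whose perturbed affine spans meet near the origin, and sum the contributions, each being the product of edge lattice lengths weighted by the lattice index $[\ZZ^n : \sum_i \ZZ u_i]$ of primitive edge directions $u_i$. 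The main obstacle is the combinatorial bookkeeping of which tuples of edges actually contribute under a given generic perturbation, together with the verification -- leveraging the balancing of $\cT(\Delta)$ around each codimension-$2$ face -- that the resulting sum collapses to the determinant $|\det(v_1, \dots, v_n)|$.
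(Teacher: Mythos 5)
Your reduction to the simplex case via translation-invariance, homogeneity, and the valuation identity (Proposition~\ref{prop:freshmansDream}(2)) is essentially the same strategy the paper uses; the paper invokes Tverberg's hyperplane-cut decomposition, which is slightly cleaner than full inclusion--exclusion on a triangulation since it only ever applies the valuation identity in the case where $P\cap Q$ is lower-dimensional, whereas your inclusion--exclusion formula $f(P)=\sum_i f(\Delta_i)-\sum_{i<j}f(\Delta_i\cap\Delta_j)+\cdots$ requires a general inclusion--exclusion principle for valuations (Volland/Groemer) that is not established in the paper. You also tacitly pass from a rational simplex to a lattice simplex; this needs the homogeneity argument you state earlier (scale by a common denominator) and should be made explicit.

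The genuine gap is the simplex case itself, which is the core of the theorem. You propose to ``verify by direct computation using the fan displacement rule'' and to ``enumerate the tuples of codimension-$1$ cones'' whose perturbed spans meet, but you then concede that ``the main obstacle is the combinatorial bookkeeping\ldots together with the verification\ldots that the resulting sum collapses to the determinant.'' That bookkeeping \emph{is} the theorem for a simplex; a proof cannot leave it as an obstacle. The paper avoids it entirely by a different argument: it inducts on $\dim P$, so that by the inductive hypothesis $\cT^{d-1}(P)$ is a known one-dimensional fan (rays along outer facet normals with weights equal to facet volumes), and then computes $\mult{0}{\cT(P)\stint\cT^{d-1}(P)}$ by observing that each full-dimensional complementary chamber $C$ of $\cT(P)$ contains exactly one ray of $-\cT^{d-1}(P)$ in its interior, so the Minkowski-sum cones $\sigma+R$ ($\sigma$ a facet of $\cT(P)$, $R$ a ray of $-\cT^{d-1}(P)$) triangulate $\RR^n$ with disjoint interiors, reducing the multiplicity formula to a single term $l\cdot a\cdot\lvert\det[r\,|\,u_1|\cdots|u_{n-1}]\rvert = a\,\lvert r^{T}v_n\rvert=\lvert\det[v_1|\cdots|v_n]\rvert$. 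If you want to keep your direct-perturbation approach, you must actually carry out the enumeration and show cancellation/collapse to the determinant; otherwise, adopt the paper's inductive structure, which replaces the open-ended bookkeeping with a one-term computation.
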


\begin{proof}
In \cite{Tverberg}, Tverberg showed that every polytope can be decomposed into simplices by a finite number hyperplane cuts.  Combining this with Proposition~\ref{prop:freshmansDream}(2) applied to the case when $P \cap Q$ is lower dimensional, we reduce the problem to the case when $P$ is a simplex. 

Now we will show the result for the case where $P$ is a simplex, by induction on the dimension of $P$.   When $P$ is one-dimensional, it is a line segment, and the assertion is true, as the multiplicity equals the lattice length of the segment.  Suppose $P$ is a $d$-dimensional simplex.  By quotienting out by the lineality space of $\cT(P)$ if necessary, we may assume that $P$ is full-dimensional in its ambient space.  By the inductive hypothesis, $\cT^{d-1}(P)$ is a one dimensional fan whose rays are facet normals of $P$ with multiplicities equal to the respective $d-1$ dimensional volumes of the facets.

 The multiplicity of the origin in $\cT(P) \stint \cT^{d-1}(P)$ is, by definition, the multiplicity of the Minkowski sum $\cT(P)- {\cT^{d-1}(P)}$ which is equal to $\RR^n$ as a set.  Each connected component $C$ of the complement of $\cT(P)$ is a simplicial full-dimensional cone, containing exactly one ray $R$ of $\refl{\cT^{d-1}(P)}$ in its interior because $\cT^{d-1}(P)$ has exactly $d+1$ rays, $d$ of which are rays of $\overline{C}$ and the remaining ray is the negative of a positive linear combination of the other $d$ rays.  Taking the Minkowski sum of $R$ with each facet of $C$, we get a triangulation of $C$.  Doing so for each complement component, we get a triangulation of $\RR^n$ (which is the normal fan of $P - {P}$).  These cones are precisely the full-dimensional cones of the form $\sigma \minksum R$ where $\sigma$ is a facet of $\cT(P)$ and $R$ is a ray of the tropical curve $\refl{\cT^{d-1}(P)}$, and they have disjoint interiors.  Hence, to compute the multiplicity of $\cT(P) - {\cT^{d-1}(P)}$, we need only to consider one such cone.  

Suppose $0, v_1, \dots, v_n$ are vertices of the simplex $P$. Let $r$ be the primitive vector perpendicular to the facet containing $0, v_1,\dots,v_{n-1}$ and $a$ be the normalized volume of that same facet. Let $\sigma$ be the maximal cone of $\cT(P)$ normal to the edge $\{0, v_n\}$ with multiplicity equal to the lattice length $l$ of the edge and $R$ be the cone spanned by $r$ with multiplicity $a$. Let $u_1,\dots,u_{n-1}$ be a lattice basis of $v_n^\perp\cap\ZZ^n$.
 According to Definition~\ref{def:stableIntersection} the multiplicity of the stable intersection of $\cT(P)$ and $\cT^{d-1}(P)$ is $l \cdot a \cdot |\det[r|u_1|\cdots|u_{n-1}]|$, which is equal to $a\, |r^T v_n|$.  This is equal to the volume $|\det[v_1|v_2|\cdots|v_n]|$ of $P$.
\end{proof}

\begin{corollary}
\label{cor:volumePolynomial}
The function $\vol_n(\lambda_1 P_1 \minksum \lambda_2 P_2 \minksum \cdots \minksum \lambda_n P_n)$ is a degree $n$ homogeneous polynomial in $\lambda_1, \lambda_2, \dots, \lambda_n$, and the coefficient of $\Pi_{i=1}^n \lambda_i^{a_i}$ is $$\frac{n!}{a_1! a_2! \cdots a_n!} \mult{0}{\Pi_{i=1}^n\cT^{a_i}(P_i)}.$$
\end{corollary}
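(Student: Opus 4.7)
The plan is to deduce the corollary from Theorem~\ref{thm:volume} by expanding the $n$-th stable power of a sum of tropical hypersurfaces using the ring structure on $T$ established earlier.

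First I would observe the two functorial properties that let Minkowski sum and positive scaling be translated into the tropical cycle algebra. For polytopes $P, Q$ in $\QQ^n$ and $\lambda \in \QQ_{>0}$ we have $\cT(P \minksum Q) = \cT(P) \cyclesum \cT(Q)$ (noted in the $k=1$ case of Proposition~\ref{prop:freshmansDream}) and $\cT(\lambda P) = \lambda\, \cT(P)$ (since scaling the polytope leaves its normal fan unchanged but scales lattice lengths of edges, and hence all ridge multiplicities, by $\lambda$). Applied repeatedly this gives
\[
\cT\bigl(\lambda_1 P_1 \minksum \cdots \minksum \lambda_n P_n\bigr) \;=\; \lambda_1\, \cT(P_1) \cyclesum \cdots \cyclesum \lambda_n\, \cT(P_n)
\]
as an element of $T^1$, for $\lambda_i \in \QQ_{>0}$.

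Next I would apply Theorem~\ref{thm:volume} to the polytope $P = \lambda_1 P_1 \minksum \cdots \minksum \lambda_n P_n$, which (generically) has dimension $n$ and so satisfies
\[
\vol_n(P) \;=\; \mult{0}{\cT^n(P)} \;=\; \mult{0}{\bigl(\lambda_1\, \cT(P_1) \cyclesum \cdots \cyclesum \lambda_n\, \cT(P_n)\bigr)^{\stint n}}.
\]
Because the tropical cycle algebra $T$ is a commutative, associative, graded $\QQ$-algebra in which stable intersection distributes over cycle sum (established just before Section~\ref{sec:ideals}), the multinomial theorem applies inside $T^n$ and yields
\[
\Bigl(\sum_{i=1}^n \lambda_i\, \cT(P_i)\Bigr)^{\stint n} \;=\; \bigoplus_{\substack{a_1,\dots,a_n \geq 0 \\ a_1+\cdots+a_n = n}} \binom{n}{a_1,\dots,a_n} \Bigl(\prod_{i=1}^n \lambda_i^{a_i}\Bigr)\, \cT^{a_1}(P_1) \stint \cdots \stint \cT^{a_n}(P_n).
\]
Taking multiplicity at the origin, which is additive on cycle sums and $\QQ$-linear in scalars, gives the claimed formula for each tuple of positive rational $\lambda_i$; since both sides agree on the open cone $\QQ_{>0}^n$ and the right-hand side is a polynomial in $\lambda_1,\dots,\lambda_n$, the polynomial identity extends uniquely to all real $\lambda_i$, proving homogeneity and degree $n$ at the same time.

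I do not expect a serious obstacle: everything is a bookkeeping consequence of Theorem~\ref{thm:volume} together with the algebra structure on $T$. The only point that needs a moment of care is the passage from positive rational $\lambda_i$ (where Minkowski sums and Theorem~\ref{thm:volume} literally apply) to the polynomial identity in all $\lambda_i$, but this is immediate from the fact that a polynomial of degree at most $n$ is determined by its values on a nonempty open set of $\QQ^n$.
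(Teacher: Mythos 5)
Your argument is correct and follows the same route as the paper's proof: apply Theorem~\ref{thm:volume} to the Minkowski sum, use $\cT(P \minksum Q) = \cT(P) \cyclesum \cT(Q)$ and $\cT(\lambda P) = \lambda\,\cT(P)$ to rewrite the hypersurface as a cycle sum, and expand the $n$-th stable power by multinomial distributivity in the graded algebra $T$. Your added care about extending the identity from positive $\lambda_i$ to the polynomial statement is a reasonable point the paper leaves implicit, but the underlying mechanism is the same.
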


\begin{proof}
We expand $\vol_n(\lambda_1 P_1 \minksum \lambda_2 P_2 \minksum \cdots \minksum \lambda_n P_n)$ using Theorem~\ref{thm:volume}.
The result now follows from the distributivity of stable intersection over cycle addition (taking unions) and the additivity of multiplicities in cycle sums.
\end{proof}

\begin{corollary}
For rational polytopes $P_1,P_2,\dots,P_n$ in $\QQ^n$, we have
$$
\mult{0}{\cT(P_1) \stint \cT(P_2) \cdots \cT(P_n)} = \MV(P_1, P_2, \dots, P_n)
$$
where $\MV$ denotes the mixed volume.
\end{corollary}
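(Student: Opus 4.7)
The plan is to obtain the statement as an immediate consequence of Corollary~\ref{cor:volumePolynomial} by a coefficient comparison. The (lattice-normalized) mixed volume $\MV(P_1,\dots,P_n)$ is characterized, up to the standard combinatorial factor, by the expansion of the volume polynomial: one writes
\[
\vol_n(\lambda_1 P_1 \minksum \lambda_2 P_2 \minksum \cdots \minksum \lambda_n P_n)
\;=\; \sum_{a_1+\cdots+a_n=n} \frac{n!}{a_1!\,a_2!\cdots a_n!}\,\MV(P_1^{a_1},\dots,P_n^{a_n})\,\prod_{i=1}^n \lambda_i^{a_i},
\]
and the fully mixed coefficient $\MV(P_1,P_2,\dots,P_n) = \MV(P_1^1,\dots,P_n^1)$ is the one attached to the monomial $\lambda_1\lambda_2\cdots\lambda_n$.

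First, I would invoke Corollary~\ref{cor:volumePolynomial}, which already gives the required polynomial expansion but with $\frac{n!}{a_1!\cdots a_n!}\mult{0}{\prod_{i=1}^n \cT^{a_i}(P_i)}$ as the coefficient of $\prod_i\lambda_i^{a_i}$. Extracting in particular the coefficient of $\lambda_1\lambda_2\cdots\lambda_n$ (the case $a_i=1$ for all $i$) from Corollary~\ref{cor:volumePolynomial} yields $n!\cdot \mult{0}{\cT(P_1)\stint \cT(P_2)\stint\cdots\stint \cT(P_n)}$, while the same coefficient equals $n!\cdot \MV(P_1,\dots,P_n)$ by the definition recalled above. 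Cancelling $n!$ gives the desired identity.

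The only thing to check is that the two volume normalizations match. The paper normalizes $\vol$ to the integer lattice parallel to the affine span, and Theorem~\ref{thm:volume} shows that this is precisely the convention under which $\vol(P)=\mult{0}{\cT^{\dim P}(P)}$. Reading the expansion of $\vol_n(\sum\lambda_i P_i)$ with this same convention therefore produces mixed volumes in the matching lattice normalization, so no extraneous factor of $n!$ or lattice index appears. I expect no real obstacle here: once Corollary~\ref{cor:volumePolynomial} is in hand, the result is just the specialization to the squarefree monomial $\lambda_1\cdots\lambda_n$, combined with the standard definition of $\MV$ as the polarization of the volume polynomial.
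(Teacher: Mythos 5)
Your proof is correct and follows the same route as the paper's primary argument: extract the coefficient of $\lambda_1\cdots\lambda_n$ from Corollary~\ref{cor:volumePolynomial} and identify it with the mixed volume via the polarization identity for the volume polynomial, in the lattice normalization fixed by Theorem~\ref{thm:volume}. The paper also offers a brief alternative argument (checking that $(P_1,\dots,P_n)\mapsto\mult{0}{\cT(P_1)\stint\cdots\stint\cT(P_n)}$ is symmetric, multilinear, and restricts to $\vol_n$ on the diagonal), but your coefficient-comparison version is the one the paper actually carries out.
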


\begin{proof} The mixed volume $\MV(P_1,\dots,P_n)$ is the coefficient of $\lambda_1\lambda_2\cdots\lambda_n$ in the polynomial $\frac{1}{n!}\vol_n(\lambda_1 P_1 \minksum \lambda_2 P_2 \minksum \cdots \minksum \lambda_n P_n)$, which is equal to $$\mult{0}{\cT(P_1)\stint\cT(P_2)\cdots\cT(P_n)}$$ by Corollary~\ref{cor:volumePolynomial}.

Alternatively, we can see that the function
$$
(P_1, P_2, \dots, P_n) \mapsto \mult{0}{\cT(P_1) \stint \cT(P_2) \cdots \cT(P_n)}
$$
satisfies the axioms of the mixed volume, i.e.\ it is symmetric, multilinear, and $\mult{0}{\cT^{n}(P)} = \vol_n(P)$.
\end{proof}

We get a proof of Bernstein's Theorem.

\begin{theorem}[Bernstein]
Let $f_1, f_2, \dots, f_n $ be generic Laurent polynomials in $\CC[x_1^{\pm 1}, \dots, x_n^{\pm 1}]$, and let $I$ be the ideal generated by them.  If $I$ is zero-dimensional, then it has length equal to the mixed volume of the Newton polytopes of $f_1, f_2, \dots, f_n$.
\end{theorem}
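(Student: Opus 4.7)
The plan is to reduce Bernstein's theorem to the two principal results of this section: Theorem~\ref{thm:genericvariables}, which identifies tropicalization of a sum of ideals with the stable intersection of the tropical varieties after a generic rescaling, and the corollary immediately preceding this theorem, which identifies the multiplicity at the origin of an $n$-fold stable intersection of tropical hypersurfaces with the mixed volume of the underlying Newton polytopes.

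First I would record the standard identification: if $f \in \CC[x_1^{\pm1},\dots,x_n^{\pm1}]$ has Newton polytope $P_i := \New(f_i)$, then $\cT(\langle f_i \rangle)$ equals $\cT(P_i)$ as a weighted balanced complex, since both are the codimension-one skeleton of the normal fan of $P_i$, with multiplicities given by lattice lengths of dual edges.  Next I would iterate Theorem~\ref{thm:genericvariables} using associativity of stable intersection (Theorem~\ref{thm:assoc}).  Writing $f_i'$ for the polynomial obtained from $f_i$ by the rescaling $x_j \mapsto c_j x_j$ and letting $I = \langle f_1,\dots,f_n \rangle$, genericity of the coefficients of the $f_i$ lets us absorb all the rescalings from the inductive application of Theorem~\ref{thm:genericvariables} into the coefficients themselves, yielding
\[
\cT(I) \;=\; \cT(f_1) \stint \cT(f_2) \stint \cdots \stint \cT(f_n) \;=\; \cT(P_1) \stint \cdots \stint \cT(P_n).
\]

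Since $I$ is zero-dimensional, $V(I)$ is a finite subset of the torus, so $\cT(I) = \{0\}$.  Using the multiplicity formula recalled at the start of Section~\ref{sec:ideals}, and observing that the choice $\omega = 0$ selects every monomial of every $f \in I$ so that $\init_0(I) = I$, I obtain
\[
\mult{0}{\cT(I)} \;=\; \dim_{\CC}\!\bigl(\CC[x_1^{\pm1},\dots,x_n^{\pm1}]/I\bigr),
\]
which is exactly the length of $I$.  Combining this with the mixed-volume corollary gives
\[
\text{length}(I) \;=\; \mult{0}{\cT(P_1)\stint\cdots\stint\cT(P_n)} \;=\; \MV(P_1,\dots,P_n),
\]
as required.

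The main point to check is that genericity of the coefficients of $f_1,\dots,f_n$ is enough to simultaneously handle all of the following: (a) the finitely many excluded specializations of the transcendental $\alpha$ in Lemma~\ref{lem:slicingbinomial} that feed into Theorem~\ref{thm:genericvariables}, accumulated over the $n-1$ inductive applications; (b) the condition that the tropical varieties indeed intersect stably (so that the codimension and dimension formulas hold); and (c) the zero-dimensionality hypothesis on $I$.  Each of these is a Zariski-open condition on the coefficient vector, so their intersection is still Zariski-open and nonempty, justifying the word \emph{generic} in the statement of Bernstein's theorem.
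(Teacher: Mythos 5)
Your proposal follows essentially the same route as the paper's proof: invoke Theorem~\ref{thm:genericvariables} (iterated via associativity) to identify $\cT(I)$ with the stable intersection of the tropical hypersurfaces of the Newton polytopes, then read off the multiplicity at the origin as the mixed volume via the corollary and as the length via the definition of tropical multiplicity. The paper's version is terser, but you have correctly filled in the details it leaves implicit, including the $\init_0(I)=I$ observation and the bookkeeping of genericity over the iterated rescalings.
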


\begin{proof}
If the coefficients are generic, then by Theorem~\ref{thm:genericvariables} the tropical variety of $I$ is the stable intersection of the tropical hypersurfaces of $f_1, f_2, \dots, f_n$, which is either empty or consists only of the origin with multiplicity equal to the mixed volume.  The length of the ideal is the multiplicity of the origin, by the definition of multiplicity.
\end{proof}

\section{Relation to Polytope Algebra}
\label{sec:PolytopeAlgebra}

For $r = 0, 1, \dots, n$, let $T^r$ be the vector space over $\QQ$ of rational tropical cycles of codimension $r$ in $\RR^n$.  Scalar multiplication acts on the multiplicities, and addition is taking union.  Then $T = T^0 \oplus T^1 \oplus \dots T^n$ is a graded algebra with stable intersection as multiplication.  

Let $\Pi$ be the polytope algebra of $\QQ^n$~\cite{polytopeAlgebra} defined as follows.  For a polytope $P \subset \QQ^n$, let $[P]$ denote the equivalence class of $P$ under the equivalence relation $P \sim P+v$ for $v \in \QQ^n$.  Then $\Pi$ consists of formal $\QQ$-linear combinations of $\{[P]: P \text{ is a polytope in } \QQ^n\}$, modulo relations $$[P \cup Q] + [P \cap Q] = [P] + [Q]$$ whenever $P \cup Q$ is a polytope.  The multiplication is Minkowski sum:
$$
[P] \cdot [Q] = [P\minksum Q].
$$
  The additive identity is $0 = [\emptyset]$, the class of the empty polytope, while the multiplicative identity is $1 = [0]$, the class of a point.  In $\Pi$, $([P]-1)^{n+1} = 0$ for every $n$-dimensional polytope $P$.  Hence the logarithm $$\log([P]) = \sum_{k\geq 1} (-1)^{k-1}([P]-1)^k/k$$ is defined for every non-empty polytope $P$.  Its inverse, the exponential map $\exp(z) = \sum_{k \geq 0} z^k/k!$, is defined for nilpotent elements $z$ in $\Pi$~\cite{polytopeAlgebra}.  McMullen showed that the polytope algebra $\Pi$ is in fact a graded algebra, where the $r$-th graded piece is linearly spanned by elements of the form $(\log([P]))^r$ for non-empty polytopes $P$~\cite[Lemma~20]{polytopeAlgebra}.

\begin{theorem}
There is an isomorphism of graded algebras
$$
\phi : \Pi \rightarrow T
$$
given by $\phi([P]) = 1 \oplus \cT(P) \oplus \frac{1}{2!} \cT^2(P) \oplus \cdots \oplus  \frac{1}{n!} \cT^n(P)$
for polytopes $P$ and linearly extending to $\Pi$.  
\end{theorem}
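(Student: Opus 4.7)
The plan is to verify that $\phi$ is a well-defined graded $\QQ$-algebra homomorphism, and then to establish bijectivity by reducing to Minkowski weights on a fixed complete simplicial fan and invoking the Fulton--Sturmfels correspondence.

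For well-definedness, translation invariance of $\phi$ is immediate since normal fans are translation-invariant, and the valuation relation $[P \cup Q] + [P \cap Q] = [P] + [Q]$ is precisely Proposition~\ref{prop:freshmansDream}(2) applied in each degree. For multiplicativity I would use that $[P]\cdot[Q] = [P \minksum Q]$ in $\Pi$ and that $\cT(P \minksum Q) = \cT(P) \cyclesum \cT(Q)$ as tropical hypersurfaces. Combining distributivity of stable intersection over cycle sum (Remark~\ref{rmk:decompose}) with commutativity and associativity (Theorems~\ref{thm:balanced} and~\ref{thm:assoc}) gives the binomial expansion
$$
\cT^k(P \minksum Q) \;=\; \sum_{i+j=k} \binom{k}{i}\, \cT^i(P)\stint \cT^j(Q),
$$
and dividing by $k!$ shows $\phi([P]\cdot[Q]) = \phi([P])\stint \phi([Q])$.

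To verify that $\phi$ is graded, observe that $\cT(P) \in T^1$ satisfies $\cT(P)^{n+1} = 0$ by Theorem~\ref{thm:balanced}, so the formal exponential $\exp(z) = \sum_k z^k/k!$ applied to $\cT(P)$ is well defined in $T$ and equals $\phi([P])$. Since $\phi$ is a ring homomorphism, $\phi(\log[P]) = \log(\phi([P])) = \cT(P) \in T^1$, whence $\phi((\log[P])^r) = \cT^r(P) \in T^r$. McMullen's result that $\Pi^r$ is $\QQ$-linearly spanned by the classes $(\log[P])^r$ as $P$ ranges over $n$-polytopes~\cite[Lemma~20]{polytopeAlgebra} then gives $\phi(\Pi^r) \subseteq T^r$.

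The main obstacle is bijectivity. My strategy is to work one complete simplicial rational fan $\Sigma$ at a time: let $T_\Sigma \subseteq T$ consist of those tropical cycles whose support is a subfan of $\Sigma$ (that is, Minkowski weights on $\Sigma$), and let $\Pi_\Sigma \subseteq \Pi$ be the subalgebra generated by classes of polytopes whose normal fan refines $\Sigma$. By the theorem of Fulton and Sturmfels~\cite[Theorem~2.1 and Theorem~4.2]{FultonSturmfels}, $T_\Sigma$ under the fan displacement product is isomorphic to the rational Chow cohomology $A^*(X_\Sigma)_\QQ$ of the toric variety $X_\Sigma$, and the fan displacement product agrees with stable intersection by Proposition~\ref{prop:limit} and the discussion following~(\ref{eqn:perturbedMult}). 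On the polytope side, Brion's theorem identifies $\Pi_\Sigma$ with the same ring $A^*(X_\Sigma)_\QQ$ via $[P] \mapsto \exp(c_1(L_P))$, where $L_P$ is the toric line bundle associated to $P$; under this identification the image of $\cT^k(P)/k!$ is the degree-$k$ part of $\exp(c_1(L_P))$, matching $\phi$ term by term. Since every element of $\Pi$ and every element of $T$ is supported on some such $\Sigma$, passing to the direct limit over all complete simplicial fans (ordered by refinement) shows that $\phi$ is a bijection.
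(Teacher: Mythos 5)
Your proof follows the same essential route as the paper's: establish the homomorphism property via Proposition~\ref{prop:freshmansDream} and then reduce bijectivity to the Fulton--Sturmfels correspondence between Minkowski weights and the polytope algebra, passing to a direct limit over complete fans. You fill in more detail than the paper does: you spell out the binomial expansion that makes $\phi$ multiplicative (the paper simply asserts that Proposition~\ref{prop:freshmansDream} gives a homomorphism, which is slightly terse since one also needs $\cT(P\minksum Q)=\cT(P)\cyclesum\cT(Q)$), and you verify explicitly that $\phi$ is graded via the exponential/logarithm. For bijectivity, the paper cites \cite[Theorem~5.1]{FultonSturmfels} (which in turn leans on McMullen's Theorem~5.1 in \cite{McMullen_Simple}) and restricts attention to zonotopal fans arising from hyperplane arrangements; you instead route through $A^*(X_\Sigma)_\QQ$ and ``Brion's theorem,'' which amounts to the same circle of ideas but is a less precise pointer to the literature than the citation the paper gives. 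One concrete flaw to fix: your definition of $\Pi_\Sigma$ as generated by ``polytopes whose normal fan refines $\Sigma$'' has the refinement running the wrong way --- if the normal fan of $P$ refines $\Sigma$, then $\cT^k(P)$ generally is not supported on a subfan of $\Sigma$ (e.g.\ a hexagon's normal fan refines a triangle's, but the hexagon's tropical curve has rays outside the triangle's fan). You want $\Sigma$ to refine the normal fan of $P$, so that $\cT^k(P)$ can be given a fan structure subordinate to $\Sigma$ and hence lies in $T_\Sigma$.
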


Under this map, $\log([P]) \mapsto \cT(P)$, so tropicalization is the logarithm.

\begin{proof}
This map is a well-defined homomorphism by Proposition~\ref{prop:freshmansDream}.
 To show that it is bijective, we use~\cite[Theorem 5.1]{FultonSturmfels}, which relies heavily on~\cite[Theorem 5.1]{McMullen_Simple}.

Any fan, after some subdivision, is a subfan of the normal fan of a polytope.  This can be achieved, for example, by taking the arrangement of hyperplanes spanned by any collection of cones in the original fan.  Any hyperplane arrangement gives rise to the normal fan of a zonotope.  
%We can also make them simplicial fans (dual to simple polytopes) by further subdivisions.  
When restricted to this polytopal fan, the algebra of tropical cycles is isomorphic to the algebra of Minkowski weights defined by Fulton and Sturmfels, which was shown to be isomorphic to the polytope algebra  \cite{FultonSturmfels}.
% Showing that Minkowski weights are isomorphic to the weight algebra is not so trivial, as the weight algebra uses Euclidean distances and volumes, while the Minkowski weights use the lattice.
% By Proposition \ref{prop:freshmansDream}, $\phi$ is a homomorphism of vector spaces.  
% The $k$-graded piece of the image is generated by the codimension-$k$ skeletons of normal fans of polytopes.  
\end{proof}

\begin{corollary}
Every tropical cycle is a rational linear combination of pure powers of
tropical hypersurfaces.
\end{corollary}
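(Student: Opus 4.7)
The plan is to transport the corollary across the graded algebra isomorphism $\phi : \Pi \to T$ just established, using McMullen's structure theorem for the polytope algebra. The crucial input is \cite[Lemma 20]{polytopeAlgebra}, which states that the $r$-th graded piece $\Pi^r$ is $\QQ$-linearly spanned by elements of the form $(\log[P])^r$ as $P$ ranges over non-empty polytopes. Since $\phi$ is a graded isomorphism, it will suffice to identify the image of each $(\log[P])^r$ under $\phi$ and then pass McMullen's spanning statement through $\phi$.

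First I would verify the remark in the preceding paragraph that $\phi(\log[P]) = \cT(P)$. Observe that $\phi([P]) = \sum_{k=0}^{n} \tfrac{1}{k!}\cT^k(P)$ is precisely $\exp(\cT(P))$ computed in $T$, where the exponential series terminates because $\cT^{n+1}(P)$ is forced to be the zero cycle by Theorem~\ref{thm:balanced} (codimensions exceed $n$). Since $\log$ and $\exp$ are mutually inverse polynomial expressions on elements of the form $1 + \text{(nilpotent)}$, and $\phi$ is a ring homomorphism, applying $\log$ to both sides gives $\phi(\log[P]) = \cT(P)$. Raising to the $r$-th power in $T$, whose multiplication is stable intersection, yields $\phi((\log[P])^r) = \cT^r(P)$.

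Applying $\phi$ to McMullen's spanning statement then shows that an arbitrary element of $T^r$ --- that is, an arbitrary tropical cycle of codimension $r$ with rational multiplicities --- is a $\QQ$-linear combination of pure powers $\cT^r(P)$ of tropical hypersurfaces. Doing this for each $r = 0, 1, \dots, n$ and summing according to the grading $T = \bigoplus_{r=0}^{n} T^r$ proves the corollary. The only subtlety I foresee is invoking the correct form of McMullen's lemma, since one must confirm his decomposition is valid over $\QQ$ and for rational (rather than only lattice) polytopes; both points lie within the scope of \cite{polytopeAlgebra}, so after that citation the argument is a purely formal consequence of the isomorphism.
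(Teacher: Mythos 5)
Your proposal is correct and follows exactly the paper's argument: invoke McMullen's Lemma 20 that $\Pi^r$ is spanned by $(\log[P])^r$, identify $\phi(\log[P]) = \cT(P)$ so that $\phi((\log[P])^r) = \cT^r(P)$, and transport the spanning statement through the graded isomorphism $\phi$. The only difference is that you spell out the $\exp/\log$ verification in slightly more detail, which the paper instead records as the one-line remark preceding the theorem.
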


\begin{proof}
The $k$-th graded piece of $\Pi$ is spanned by $\{\log([P])^k : P \text{ is a polytope}\}$ as shown in \cite{polytopeAlgebra}.  For a polytope $P$, $\phi(\log([P])^k) = \cT^k(P)$, and the assertion follows because $\phi$ is an isomorphism.
\end{proof}

This corollary can be made constructive.  Given a tropical cycle, first make it a subfan of the normal fan of a simple polytope, for example, by extending it to a hyperplane arrangement and perturbing the facets of the dual zonotope.  We can then find a linear basis $F_1, \dots, F_m$ of of $T^1$ restricted on this fan, by linear programming, so that each $F_i$ has non-negative weights and hence are tropical hypersurfaces of polytopes.  Then for every $1 \leq k \leq n$, the $k$-fold products $F_{i_1}\stint F_{i_2}\stint\cdots\stint F_{i_k}$ linearly span $T^k$ restricted to this fan.  We can then decompose the input fan as a linear combination of these stable intersections of tropical hypersurfaces.

\medskip

By Theorem~\ref{thm:genericvariables}, pure powers of tropical
hypersurfaces are realizable, so we have the following.
\begin{corollary} Every tropical cycle is a rational linear combination of realizable tropical varieties.
\end{corollary}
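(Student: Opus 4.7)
The plan is to combine the preceding corollary with Theorem~\ref{thm:genericvariables}. The preceding corollary reduces the problem to showing that every pure power $\cT^k(P)$ of a tropical hypersurface of a rational polytope $P$ is itself realizable.

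I would first reduce to the case where $P$ is a lattice polytope: if $N \in \NN$ is chosen so that $NP$ is integral, then $\cT(NP) = N \cdot \cT(P)$ as weighted fans (lattice lengths of edges scale by $N$), hence $\cT^k(NP) = N^k \cT^k(P)$ by distributivity of stable intersection over rational scaling, so it is enough to realize $\cT^k(NP)$ and scale by $1/N^k$. For a lattice polytope $P$, choose any Laurent polynomial $f \in \kk[x_1^{\pm 1}, \dots, x_n^{\pm 1}]$ with Newton polytope $P$ and nonzero coefficients, so that $\cT(\langle f\rangle) = \cT(P)$. Introduce $k-1$ fresh families of indeterminates $\mathbf{c}^{(2)}, \dots, \mathbf{c}^{(k)}$, and for each $j$ let $f_j$ be the image of $f$ under the substitution $x_i \mapsto c_i^{(j)} x_i$, with $f_1 = f$. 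By Lemma~\ref{lem:gfanmultiplied} each $\cT(\langle f_j\rangle) = \cT(P)$, and iterating Theorem~\ref{thm:genericvariables} together with associativity (Theorem~\ref{thm:assoc}) yields
$$
\cT(\langle f_1, \dots, f_k\rangle) \;=\; \cT(\langle f_1\rangle) \stint \cdots \stint \cT(\langle f_k\rangle) \;=\; \cT^k(P),
$$
exhibiting $\cT^k(P)$ as a realizable tropical variety.

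The main obstacle is justifying that Theorem~\ref{thm:genericvariables} can be iterated legitimately. At step $j$, the accumulated ideal $\langle f_1, \dots, f_{j-1}\rangle$ already involves the transcendentals $\mathbf{c}^{(2)}, \dots, \mathbf{c}^{(j-1)}$, so the theorem must be applied with base field $\kk(\mathbf{c}^{(2)}, \dots, \mathbf{c}^{(j-1)})$ and rescaling parameters $\mathbf{c}^{(j)}$ taken transcendental over this extension. A quick inspection of the proof of Theorem~\ref{thm:genericvariables} shows it is valid in this generality, since only the algebraic independence of the new rescaling constants from the coefficients of the existing ideal is used. Once this bookkeeping is in place, the corollary is immediate from the preceding corollary.
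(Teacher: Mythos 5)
Your proposal is correct and follows the same route as the paper: the preceding corollary reduces the statement to showing that pure powers $\cT^k(P)$ are realizable, and Theorem~\ref{thm:genericvariables} (iterated, with fresh transcendental rescaling parameters at each step) exhibits $\cT^k(P)$ as $\cT(\langle f_1,\dots,f_k\rangle)$. The paper asserts this realizability in a single sentence; your write-up usefully spells out the two points the paper leaves implicit, namely the reduction from rational to lattice polytopes via scaling by $N$ and division by $N^k$, and the bookkeeping needed to iterate Theorem~\ref{thm:genericvariables} over an algebraically closed extension of the base field at each step -- both of which you handle correctly.
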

It is not true, however, that every tropical cycle with {\em positive} weights is a {\em positive} rational linear combinations of realizable tropical varieties.  See~\cite{BabaeeHuh, Yu} for counterexamples.

\section{Connectivity}
\label{sec:connectivity}

A pure dimensional polyhedral complex is {\em connected through codimension one} if every pair of facets is connected by a path through ridges and facets of the complex.  Let $\kk$ be an algebraically closed field.  Tropical varieties of prime ideals in $\kk[x_1,\dots,x_n]$ are connected through codimension one \cite{BJSST, CartwrightPayne}. 

Let $T_1$ and $T_2$ be tropical varieties connected through codimension one.  Then the stable intersection $T_1 \stint T_2$, or even transverse intersection, need not be connected through codimension one, as we will see in Example~\ref{ex:disconnected} below. 
However, for constant coefficient tropical varieties (i.e.\ tropical varieties with respect to the trivial valuation) of prime ideals, stable intersection preserves connectivity through codimension one.  A version of the following result appeared in our earlier paper \cite{tropRes}.  This answers the last open question in \cite{CartwrightPayne} affirmatively for constant coefficient tropical varieties of irreducible varieties. 

\begin{theorem}
\label{thm:connected}
Let $I_1, I_2, \dots, I_k \subset \kk[x_1,\dots,x_n]$ be prime ideals, where $2 \leq k \leq n$, and let $\cT(I_1), \cT(I_2), \dots, \cT(I_k)$ be their constant coefficient tropical varieties respectively.  Then the stable intersection $\cT(I_1) \stint \cT(I_2) \cdots \cT(I_k)$ is itself the tropical variety of a prime ideal and thus is connected through codimension one.
\end{theorem}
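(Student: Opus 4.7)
The plan is to identify the stable intersection with the constant-coefficient tropical variety of a prime ideal over an extension of $\kk$, after which connectivity through codimension one follows at once from the classical theorem of \cite{BJSST, CartwrightPayne} for tropical varieties of prime ideals.

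I would induct on $k$, using associativity (Theorem~\ref{thm:assoc}) to reduce the inductive step to the following statement: if $\cT(P)$ is the tropical variety of a prime ideal $P$ (defined over some algebraically closed extension $\kk'$ of $\kk$) and $I$ is a second prime ideal, then $\cT(P)\stint\cT(I)$ is again the tropical variety of a prime ideal. Introduce fresh transcendentals $c_1,\ldots,c_n$ over $\kk'$ and let $I'$ be the image of $I$ under the torus rescaling $x_i\mapsto c_i x_i$ inside $\KK[x_1,\ldots,x_n]$, where $\KK = \kk'(c_1,\ldots,c_n)$. Since constant-coefficient tropical varieties are invariant under torus automorphisms, $\cT(I')=\cT(I)$, and Theorem~\ref{thm:genericvariables} yields $\cT(P)\stint\cT(I) = \cT(P + I')$. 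It therefore suffices to show that the saturation $Q := (P + I'):(x_1\cdots x_n)^\infty$ is prime in $\KK[x_1,\ldots,x_n]$.

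Geometrically this is a Bertini-type assertion: the intersection $V(P)\cap t\cdot V(I)$ in the torus $T = (\overline{\KK}^*)^n$ is irreducible, where $t=(c_1,\ldots,c_n)$ has algebraically independent coordinates over $\kk'$. I would prove it by studying the morphism
\[
\mu \colon V(P)\times V(I) \longrightarrow T, \qquad (x,y) \longmapsto (x_1/y_1,\ldots,x_n/y_n).
\]
Its source is irreducible (a product of irreducibles), its fiber over $t$ is isomorphic to $V(P)\cap t\cdot V(I)$, and after a preliminary reduction using Lemma~\ref{lem:linklineality} to quotient out by common lineality---thereby excluding the degenerate case where either variety is contained in a translate of a proper subtorus---the map $\mu$ is dominant. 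The schematic fiber over the generic point $(c_1,\ldots,c_n)$ is precisely the scheme cut out by $P+I'$ in the torus, and one shows it is geometrically irreducible via a Stein factorization of $\mu$, using that $T$ has no nontrivial connected finite étale covers in characteristic zero and the $T$-equivariance of the family with respect to the second factor.

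The principal obstacle is exactly this irreducibility step. Dominance and the expected codimension follow from elementary dimension counting once the lineality reduction has been made, but ruling out a nontrivial finite étale factor in the Stein factorization of $\mu$---equivalently, showing that the generic torus translate of one irreducible subvariety against another remains irreducible---is the technical heart of the proof. This Bertini-type statement is the same one used in the authors' earlier paper \cite{tropRes}; in positive characteristic some additional care with Frobenius and geometric versus algebraic irreducibility is required.
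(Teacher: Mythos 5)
Your approach is genuinely different from the paper's, but the key step---the Bertini-type irreducibility claim---contains a factual error that invalidates the argument as written. You assert that ``$T$ has no nontrivial connected finite \'etale covers in characteristic zero.'' This is false: the multiplication-by-$m$ map $[m]\colon T\to T$ is a nontrivial connected finite \'etale cover of the torus for every $m\geq 2$ (the algebraic fundamental group of $(\kk^*)^n$ is $\widehat{\ZZ}^n$, not trivial). Consequently, knowing that the finite part $\nu\colon Z\to T$ of the Stein factorization is \'etale would not give you $\deg\nu=1$, and your argument stalls exactly at its acknowledged ``technical heart.'' The $T$-equivariance you mention in passing \emph{is} the right ingredient---one can argue that $Z$ inherits a $T$-action for which $\nu$ is equivariant, observe that the orbit map $T\to Z$, $s\mapsto s\cdot z$, composed with $\nu$ is an isomorphism $T\to T$, hence the orbit is an open dense subset of $Z$ on which $\nu$ is injective, so $\nu$ is finite and birational onto the normal variety $T$ and thus an isomorphism---but this argument is not present, and the incorrect \'etale statement is what you actually lean on. (There are further loose ends: your ``preliminary reduction by common lineality'' quotients by the \emph{intersection} of the two lineality spaces, which does not prevent either $V(P)$ or $V(I)$ individually from lying in a translate of a proper subtorus; and the Stein factorization requires care since $\mu$ is a morphism of affine varieties and is not proper.)

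The paper's proof sidesteps all of this. Rather than rescaling all variables at once and proving a Bertini irreducibility theorem, it uses the diagonal trick to reduce to the single stable intersection $\cT(I)\stint H$ with a coordinate hyperplane $H=\{\omega_1=0\}$, i.e.\ to specializing one variable $x_1\mapsto\alpha$. The crucial observation is that, after saturation, this specialization is a \emph{localization}: the ideal $I\cdot\kk(x_1)[x_2,\dots,x_n]$ is prime because localization preserves primality. The specialized ideal over the algebraic closure $\overline{\kk(\alpha)}$ may fail to be prime, but by \cite[Proposition~4]{CartwrightPayne} all its geometric components have the same tropicalization, so the support of the stable intersection is the support of a prime ideal's tropical variety---which is all that connectivity requires. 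This avoids both the dominance/irreducibility analysis of $\mu$ and the question of whether the generic intersection $V(P)\cap t\cdot V(I)$ is geometrically irreducible (which fails, for instance, whenever $\dim V(P)+\dim V(I)=n$ and the mixed volume exceeds one, since then the intersection is several reduced points). Your route, if repaired, would prove a stronger statement (genuine primality of the saturated sum in the positive-dimensional case), but at the cost of a nontrivial geometric lemma that the paper deliberately circumvents.
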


\begin{proof}
First consider the case when $k = 2$.
Using the diagonal trick, we can reduce to the case when $I_1$ is a prime ideal and $I_2 = \langle x_1 - \alpha \rangle$ for a generic constant $\alpha$.  By the discussion above Lemma~\ref{lem:slicingbinomial}, we may either take $\alpha$ to be a general element in $\kk$ or to be transcendental over $\kk$; both cases give the same tropical varieties.

In general, for a prime ideal $I$ and a generic $\alpha$, specializing a variable $x_1$ to $\alpha$ may not preserve primality, i.e.\ the ideal $J_1:=I + \langle x_1 - \alpha\rangle\subseteq\overline{\kk(\alpha)}[x_1,\dots,x_n]$ need not be prime.  However, we claim that all its irreducible components have the same tropical variety.  To see this, note that $\cT(J_1)=\{0\} \times \cT(J_2)$  where $J_2$ is the ideal generated by $I$ in $\overline{\kk(x_1)}[x_2,\dots,x_n]$.  Let $J_3$ be the ideal generated by $I$ in $\kk(x_1)[x_2,\dots,x_n]$; it is prime because primality is preserved under localization.  

Whether a point is in a tropical variety can decided by computing the saturation of the ideal with $x_1\cdots x_n$.  This can be done using Gr\"obner basis computations which are not affected by taking extensions of the coefficient field, so $J_2$ and $J_3$ have the same tropical variety; hence
$$\{0\} \times \cT(J_3)=\{0\}\times \cT(J_2)=\cT(J_1).$$  Furthermore,  since $J_3$ is prime, all irreducible components of $J_2$ have the same tropicalization by \cite[Proposition~4]{CartwrightPayne}.  Since the tropical varieties of the irreducible components of $J_3$ are the same as those of the irreducible components of $J_2$, the conclusion follows.

We have shown that $\supp(\cT(I_1) \stint \cT(I_2))$ is the support of the tropical variety of a prime ideal. The cases for $k > 2$ follow by induction.
\end{proof}

In particular, it follows that the stable intersection of constant coefficient tropical hypersurfaces is connected through codimension one.  This fact has been used to for computing stable intersections of tropical hypersurfaces via fan traversal in Gfan.

The result from Theorem~\ref{thm:connected} cannot be extended to the non-constant coefficient case. To construct counterexamples, consider $k$ lattice polytopes in $k$ dimension having a regular mixed subdivision with at least two mixed cells.  Then the stable intersection of the dual tropical hypersurfaces contain at least two distinct points.  By embedding the $k$ polytopes in $\RR^n$, we can construct disconnected stable intersections of tropical hypersurfaces in $\RR^n$.

\smallskip
In general, for constant-coefficient tropical varieties of non-irreducible ideals, the stable intersection does not preserve connectivity through codimension one, as the following example shows.

\begin{example}
\label{ex:disconnected}
Let $T_1 = \cT^2(1+x_1+x_2+x_3+x_4+x_5)$ and $T_2 = \cT^2(1+x_1x_4+x_2x_5+x_3^2x_4^3+x_4^4x_5^7+x_3^6x_5)$, each of which is a stable intersection of a hypersurface with itself and hence is a $3$-dimensional fan in $\RR^5$.  They are both realizable as tropical varieties of ideals by Theorem~\ref{thm:genericvariables} and connected through codimension~$1$ by Theorem~\ref{thm:connected}.  Both $T_1$ and $T_2$ contain the two dimensional cone spanned by $-e_1$ and $-e_2$, so the union $T_1 \cup T_2$ is connected through codimension~$1$.  However for the hyperplane $H=\cT(x_1x_2x_3x_4x_5+1)$ the stable intersection $(T_1\cup T_2) \stint H$ is not connected through codimension~$1$.  To see this, we used Gfan to compute $T_1 \stint H$ and $T_2 \stint H$ using the command  {\tt gfan\_tropicalintersection --stable} and verified  that $(T_1 \stint H) \cap (T_2 \stint H) = \{0\}$ using  {\tt gfan\_fancommonrefinement}.  Thus the two dimensional fan
$(T_1 \stint H) \cup (T_2 \stint H)$ is not connected through codimension~$1$.

The fan $T_1 \cup T_2$ is realizable as the tropical variety of an ideal, but it cannot be realized as the tropical variety of a {\em prime} ideal, for any choice of multiplicities.
\qed
\end{example}

\appendix

\section{Proofs of some results in Section~\ref{sec:basics}}

Here we present detailed, careful proofs of some results needed to derive the dimension formula, balancing condition, and associativity of stable intersection.  Although elementary, these proofs, especially that of Lemma~\ref{lem:iterate}, are some of the most difficult in this paper.  Many subtle and intricate details need to be worked out.

We start by recalling the setting and some definitions from Section~\ref{sec:basics}.
Let $X$ be a tropical cycle in $N_\RR$ and $A: N \rightarrow N'$ be a linear map between lattices, inducing a linear map $A : N_\RR \rightarrow N'_\RR$. We can endow $A(X)$ with a polyhedral structure such that the image of each face of $X$ is a union of faces of $A(X)$.  For any point $\omega \in A(X)$ lying in the relative interior of a facet, let
\begin{equation}
\tag{\ref{eqn:ST}}
\mult{\omega}{A(X)} = \sum_{v} \mult{v}{X} \cdot [N'_\omega : A N_v ],
\end{equation}
where the sum runs over one $v$ for each facet of $X$ meeting the preimage of $\omega$.

\newtheorem*{lem:balanced}{Lemma~\ref{lem:balanced}}
\begin{lem:balanced}
 Let $\tau$ be a ridge in $X$ such that $A(\tau)$ also has codimension $1$ in $A (\link_X(\tau))$.  Then $A (\link_X(\tau))$ is balanced with multiplicity defined in~(\ref{eqn:ST}).
\end{lem:balanced}

\begin{proof}
Let $\tau$ be a ridge in $X$ such that $A(\tau)$ also has codimension $1$ in $A(\link_X(\tau))$.
From the balancing condition on $X$ at $\tau$, we have
$$
\sum_{\sigma \supset \tau} \mult{\sigma}{X}\cdot v_{\sigma/\tau}  \in \Span_\QQ(N_\tau).
$$
Applying the map $A$ gives
$$
\sum_{\sigma \supset \tau} \mult{\sigma}{X}\cdot A v_{\sigma/\tau}  \in \Span_\QQ(A{N_\tau}).
$$
Observe that $[ N'_{A \sigma} : N'_{A \tau} +\Span_\ZZ(A v_{\sigma/\tau})] v_{A\sigma/A\tau} \equiv A v_{\sigma/\tau}$ (mod $\Span_\QQ(AN_\tau)$) and
\begin{align*}
[N'_{A\sigma} : A N_\sigma] &=[N'_{A\sigma} :A(N_\tau+\Span_\ZZ(v_{\sigma/\tau}))] \\
&=[N'_{A\sigma}:AN_\tau+\Span_\ZZ(Av_{\sigma/\tau})]\\ &=[N'_{A\sigma}:N'_{A\tau}+\Span_\ZZ(Av_{\sigma/\tau})][N'_{A\tau}:AN_\tau].
\end{align*}
Hence 
\begin{align*}
&\sum_{\sigma \supset \tau} \mult{\sigma}{X} \cdot [N'_{A \sigma} : A N_\sigma] \cdot v_{A \sigma / A \tau}\\ =
[N'_{A\tau}:AN_\tau]&\sum_{\sigma \supset \tau} \mult{\sigma}{X} \cdot [N'_{A\sigma}:N'_{A\tau}+\Span_\ZZ(Av_{\sigma/\tau})] \cdot v_{A \sigma / A \tau}\\ \equiv
[N'_{A\tau}:AN_\tau]&\sum_{\sigma \supset \tau} \mult{\sigma}{X} Av_{\sigma/\tau}\equiv 0\hspace{3cm}\textup{(mod } \Span_\QQ(AN_\tau)).
\end{align*}
%$$\sum_{\sigma \supset \tau} \mult_X(\sigma) \cdot [N'_{A \sigma} : A N_\sigma] \cdot v_{A \sigma / A \tau}=$$
%$$[N'_{A\tau}:AN_\tau]\sum_{\sigma \supset \tau} \mult_X(\sigma) \cdot [N'_{A\sigma}:N'_{A\tau}+\Span_\ZZ(Av_{\sigma/\tau})] \cdot v_{A \sigma / A \tau}\equiv$$ 
%$$[N'_{A\tau}:AN_\tau]\sum_{\sigma \supset \tau} \mult_X(\sigma) Av_{\sigma/\tau}\equiv 0\textup{ mod } \Span_\QQ(A\tau).$$
This proves that the image of a neighborhood of $\tau$ is balanced with the multiplicities given by formula~(\ref{eqn:ST}).
\end{proof}

\newtheorem*{lem:hyperplane}{Lemma \ref{lem:hyperplane}}
\begin{lem:hyperplane}
Let  $X$ be a tropical cycle and $H$ be a tropical cycle whose support is an affine hyperplane, both with positive multiplicities.  Then $X \stint H$ is also a tropical cycle, possibly zero,  with $\codim(X \stint H) = \codim(X)+1$.
\end{lem:hyperplane}

\begin{proof} 

From Lemma~\ref{lem:computingStableIntersections} we get $\dim(X\stint H)\leq \dim(X)-1$.  

If $X$ is contained in a finite union of hyperplanes parallel to $H$, then it is clear from the definition that $X \stint H$ is empty (hence a zero cycle).  

Suppose $X$ is not contained in finitely many hyperplanes parallel to $H$.  We wish to prove that every point $\omega$ in $X \stint H$ is contained in a face of dimension $\dim(X)-1$ in $X \stint H$.  By taking links if necessary, by Lemma~\ref{lem:linklineality}, we may assume that $\omega = 0$ and that $X$ is a fan and $H$ is a hyperplane through the origin. Also assume that $H$ has multiplicity $1$.

Choose a maximal cone of $X$ which is not contained in $H$ and let $S$ be its affine span. Let $U$ be a complementary subspace of $S \cap H$ in $H$, so $U \subset H$, $\dim(U) = \codim(X)$, and $S+U=N_\RR$. 

Since the multiplicities are positive and $X \minksum U$ contains a full dimensional cone, we have $X \minksum U = N_\RR$ with positive multiplicities.  Consider cones of the form $\sigma +U$ where $\sigma$ is a maximal cone of $X$ and $\sigma+U$ is full-dimensional.  These cones must cover all of $N_\RR$, and  they also cover all of $H$ in particular.  Hence there is a maximal cone $\sigma$ of $X$ such that $\dim(\sigma + U)=n$ and $\dim((\sigma + U)\cap H)=n-1$.  Since $\dim(\sigma+U) = n$, we also have $\dim(\sigma + H) = n$, so $\sigma \cap H \subset X \stint H$ by definition.  Since $\sigma + U$ is full dimensional inside $H$, and $\dim(U) = n - \dim(X) = n-1 - (\dim(X) - 1),$ we must have $\dim(\sigma \cap H) \geq \dim((\sigma+U)\cap H)-\dim(U)=n-1-\dim(U)=\dim(X)-1$ as desired.  This completes the proof that $X \stint H$ has expected codimension.

We will now check that $X \stint H$ is balanced.  By taking links at a ridge of $X \stint H$ and quotienting out by the lineality space parallel to the ridge, we can assume that $X$ is a two dimensional fan. For a generic vector $v$, $H + v$ intersects $X$ transversely, and $X \stint H$ consists of unbounded rays of $X \cap (H + v)$ counted with multiplicity.  To show the balancing of the rays of $X \stint H$, following the ideas leading to equation~(\ref{eqn:perturbedMult}), it suffices to show balancing at every vertex of $X \cap (H + v)$.

By taking links at vertices of $X \cap (H+v)$, we only have to consider the case when $X$ is a two dimensional fan with one-dimensional lineality space $\tau$ and $H$ does not contain $\tau$. In this case, for any facet $\sigma$ in $X$, $\sigma \cap H$ is a ray in $X \stint H$.  Let $v_\sigma$ be the primitive lattice vector in the ray $\sigma \cap H$.  Then by the definition of multiplicities in stable intersections, we have
$$
\mult{\sigma \cap H}{X \stint H} = \mult{\sigma}{X} \cdot [N : N_H + N_\sigma] = \frac{\mult{\sigma}{X} \cdot [N : N_H + N_\tau]}{[N_\sigma : \Span_\ZZ(v_\sigma) + N_\tau]}.
$$
On the other hand, from since $X$ is balanced, we have
$$
\sum_{\sigma \supset \tau} \mult{\sigma}{X} \cdot v_{\sigma/\tau} \in \Span_\QQ(N_\tau).
$$
Moreover we have $v_\sigma - [N_\sigma : \Span_\ZZ(v_\sigma) + N_\tau] \cdot v_{\sigma/\tau} \in \Span_\QQ(N_\tau)$.  Combining the last two statements and multiplying through with $[N : N_H + N_\tau]$ gives
$$
\sum_{\sigma \supset \tau} \mult{\sigma\cap H}{X\stint H}\cdot v_{\sigma} \in \Span_\QQ(N_\tau) \cap H = \{0\}.
$$
This shows that $X \stint H$ is balanced.
\end{proof}
 
\newtheorem*{lem:iterate}{Lemma \ref{lem:iterate}}
\begin{lem:iterate}
Let $X$ be an arbitrary tropical cycle with positive multiplicities.  Suppose $Y$ is a tropical cycle of codimension $r$ whose support is an affine linear space such that $Y = ((H_1 \stint H_2) \cdots H_r)$ where $H_1,\dots,H_r$ are tropical cycles with positive multiplicities whose supports are affine hyperplanes. Then
$$
X \stint Y = (((X \stint H_r)\stint H_{r-1})\cdots H_1).
$$
In particular, it follows that $X\stint Y$ is a tropical cycle since the right hand side is a tropical cycle by Lemma ~\ref{lem:hyperplane}.
\end{lem:iterate}

\begin{proof}
We will use induction on $r$.  When $r=1$, the statement is trivial.  Suppose $r \geq 2$, and by the inductive hypothesis, we have $$
(((X \stint H_r)\stint H_{r-1})\cdots H_1) = (X \stint H_r) \stint L
$$
where $L = ((H_1\stint H_2)\cdots H_{r-1})$.  Hence $Y = L \stint H_r$.  It remains to prove that \begin{equation}
\label{eqn:toprove}
(X\stint H_r) \stint L = X \stint (H_r \stint L).
\end{equation} 
We know that $(X\stint H_r) \stint L$ is a tropical cycle because it is equal to $(((X \stint H_r)\stint H_{r-1})\cdots H_1)$ which is a tropical cycle by the previous lemma.  At this point we have not shown yet that $ X \stint (H_r \stint L)$ is a tropical cycle. 

For simplicity of notation, let $H = H_r$.
By taking links, for equality of the supports in (\ref{eqn:toprove}) it suffices to prove that $(X \stint H) \stint L$ is non-empty if and only if $X \stint (H \stint L)$ is non-empty.  
For any linear space $V$, $X \stint V$ is non-empty if and only if the projection of $X$ onto a complement $V^\perp$ of $V$ is surjective.  Let $\pi_H$, $\pi_L$, and $\pi_{HL}$ be projections from $N_\RR$ onto the $H^\perp$, $L^\perp$, and $(H \cap L)^\perp$ respectively.  Since $H + L = N_\RR$, we have $H^\perp \cap L^\perp = \{0\}$, so $(H \cap L)^\perp$ is a direct sum of $H^\perp$ and $L^\perp$, and $\pi_H + \pi_L = \pi_{HL}$.

Suppose $(X \stint H) \stint L$ is non-empty.  Then there is a cone $\sigma \in X$ such that $$\dim(\pi_H(\sigma))=\dim(H^\perp) \text{ and }\dim(\pi_L(\sigma \cap H)) = \dim(L^\perp), \text{so}$$
 \begin{align*}\dim(\pi_{HL}(\sigma)) &= \dim(\pi_H(\sigma))+\dim(\pi_L(\sigma)) \\ &= \dim(H^\perp)+\dim(L^\perp) \\ &= \dim((H \cap L)^\perp). \end{align*}
Thus $X \stint (H \stint L)$ is non-empty.  

Now suppose $X \stint (H \stint L)$ is non-empty.  Then $\pi_{HL}(X) = H^\perp + L^\perp$, so there exists a $\sigma \in X$ such that 
\begin{align*}\dim(\pi_{HL}(\sigma)) & =\dim(H^\perp)+\dim(L^\perp), \text{ and } \\ \dim(\pi_{HL}(\sigma) \cap L^\perp) & = \dim(L^\perp). \end{align*}
Then $\dim(\pi_H(\sigma))=\dim(H^\perp)$, so $\sigma \cap H$ is a cone in $X \stint H$.
We will show that $\sigma \cap H \cap L$ is a cone in $(X \stint H)\stint L $ by showing that $\pi_L(\sigma\cap H) \supset \pi_{HL}(\sigma)\cap L^\perp$, which is full-dimensional in $L^\perp$.  

Let $v \in \pi_{HL}(\sigma)\cap L^\perp$.  Let $v' \in \sigma$ such that $\pi_{HL}(v')=v$.  Then  
$$\pi_H(v') = \pi_{HL}(v') - \pi_L(v') = v - \pi_L(v') \in L^\perp,$$ but $\pi_H(v')$ is also in $H^\perp$, so $\pi_H(v') = 0$.  Hence $v' \in H$, and 
$$v = \pi_{HL}(v') = \pi_L(v')  \in \pi_L(\sigma \cap H).$$ This proves that $\dim(\pi_L(\sigma\cap H))\geq\dim(\pi_{HL}(\sigma)\cap L^\perp)=\dim(L^\perp)$. Therefore $(\sigma\cap H)\cap L$ is a face of $(X\cdot H)\cdot L$,
 proving that $(X\stint H)\stint L$ is non-empty.

We have proven that the supports of $(X\stint H) \stint L$ and $X \stint (H \stint L)$ coincide.  Since $(X\stint H) \stint L$ is pure of expected dimension, $X \stint (H \stint L)$ is as well.

% Before we prove the equality with multiplicities, we prove a weak version of Theorem~\ref{thm:balanced}. Specifically we prove that given cycles $X$ and $Y$ in $N_\RR$ with positive multiplicities, $X\cdot Y$ is a tropical cycle with $\textup{codim}(X\cdot Y)=\textup{codim}(X)+\textup{codim}(Y)$. By Lemma~\ref{lem:diagonal} the stable intersection $X \stint  Y$ equals $(X\times Y)\stint \{\Delta\}$ where $\Delta$ is the diagonal in $N_\RR \times N_\RR$. By the first part of Lemma~\ref{lem:assocLinear}, up to multiplicity, taking the stable intersection with the diagonal $\Delta$ is the same as taking stable intersections with hyperplanes, one at a time.  By Lemma~\ref{lem:hyperplane}, the stable intersection of a tropical cycle and a hyperplane is again a tropical cycle, balanced with expected codimension.  Then $\codim(X\times  Y \stint \{\Delta\}) = \codim(X)+\codim( Y)+n$ in $N_\RR \times N_\RR$.  Identifying $\Delta$ with $N_\RR$ reduces the codimension by $n$, so we have $\codim(X\stint  Y) = \codim(X)+\codim( Y)$.

% We now continue the proof.

To compute multiplicities, for simplicity suppose $H$ and $L$ have multiplicity $1$ everywhere.  After taking links and taking quotients, we may assume that the support of the stable intersections on both sides of (\ref{eqn:toprove}) consist only of the origin. For generic $v_2\in N_\RR$ we have by Definition~\ref{def:stableIntersection}
$$\mult{0}{(X \stint H) \stint L} = \sum_{\substack{\tau\in X\cdot H:\\\tau \cap (L+v_2)\not=\emptyset}} \mult{\tau}{X \stint H}[N:N_{L}+N_{\tau}]$$
where, for generic $v_1\in N_\RR$,
$$\mult{\tau}{X\cdot H}=\sum_{\substack{\sigma\in X:~\sigma \cap (H+v_1)\not=\emptyset \\ \textup{and }\tau=\sigma\cap H}}\mult{\sigma}X[N:N_{H}+N_\sigma].$$
Since $\mult{H\cap L}{H\cdot L}=[N:N_{H}+N_{L}]$, we  have
$$\mult{0}{X \stint (H \stint L)}= \sum_{\substack{\sigma\in X:\\ \sigma \cap ((H\cap L)+v_3)\not=\emptyset}}\mult{\sigma}{X}[N:N_{H}+N_{L}][N:N_{H\cap L}+N_\sigma]$$
for generic $v_3\in N_\RR$. 

%  We can choose $v_1$ to be much smaller than $v_2$ and $v_3$ such that $(H+v_1) \cap (L + v_2) = (H \cap L) + v_3$. Then the collections of $\sigma$'s appearing in both of the multiplicity formulas are the same, and each $\tau$ has the form $\sigma \cap H$.

For $X$, $H$ and $L$ fixed, let $v$ and $v_2$ be generic vectors in $N_\RR$.  
 Since $X$ is a  finite polyhedral complex, for all sufficiently small $\varepsilon > 0$, the set of facets $\sigma \in X$ such that $\sigma\cap (H+\varepsilon v) \cap (L+v_2) \neq \emptyset$ is constant. Let $v_1 = \varepsilon v$ where $\varepsilon > 0 $ is sufficiently small and  $v_3=v_1+v_2$. 
We will show that the collections of $\sigma$'s appearing in both of the multiplicity formulas above are the same (where each $\tau$ has the form $\sigma \cap H$).  For this we need
$$\sigma \cap (H+v_1)\not=\emptyset \text{ and } \sigma \cap H \cap (L+v_2)\not=\emptyset ~~\iff~~ \sigma \cap ((H\cap L)+v_1+v_2)\not=\emptyset.$$

 Since $H+L = N_\RR$, we may assume without loss of generality that $v_1 \in L$ and $v_2 \in H$.  Then we have $(H \cap L) + v_1 + v_2 = (H+v_1) \cap (L + v_2)$. Now suppose that $\sigma \cap ((H\cap L)+v_1+v_2)\not =\emptyset$. Then $\sigma \cap (H+v_1) \cap (L + v_2) \neq \emptyset$.  This is true for all sufficient small $v_1$'s and $\sigma$ is closed, so we get  $\sigma \cap (H+v_1)\not=\emptyset$ and $\sigma \cap H \cap (L+v_2)\not=\emptyset$.

 Conversely, because $\sigma \cap H \cap (L+v_2)\not=\emptyset$ and $v_2$ is generic, $\textup{dim}(\sigma\cap H)$ is at least the codimension of $H \cap (L+v_2)$  in $H$, so
$$ \dim(\sigma \cap H) \geq \dim(H)-\dim(H \cap L).$$ We assumed that $X \stint (H \stint L)$ is $0$-dimensional, and  we have shown above that the stable intersection of a tropical cycle and a linear space has the expected dimension, so $\textup{dim}(\sigma)=\textup{codim}(H\cap L)$.
Then \begin{align*}\dim(\sigma)-\dim(\sigma \cap H) & \leq\dim(\sigma)-\dim(H)+\dim(H\cap L) \\ & =\codim(H\cap L)-\dim(H)+\dim(H\cap L) \\& = \dim(N_\RR) - \dim(H) \\&=1.\end{align*}  Moreover $\sigma$ is not contained in $H$, so $\dim(\sigma)=\dim(\sigma \cap H)+1$. Combined with $\sigma \cap (H+v_1)\not=\emptyset$ and genericity of $v_2$ this shows that offsetting $(H\cap L)+v_2$ a small amount in direction $v$ will keep the intersection $\sigma\cap ((H\cap L)+v_2+\varepsilon v)$ non-empty. This completes the proof that the $\sigma$'s appearing in the sums are the same.  

Since $N_{\tau}=N_{H}\cap N_{\sigma}$, to prove that the two multiplicities are equal, it suffices to prove for subgroups $A,B,C$ of an abelian group $N$ with well defined indices:
$$[N:A+C][N:B+(A\cap C)]=[N:A+B][N:(A\cap B)+C]$$
and apply this equation to $A=N_H,B=N_L,C=N_\sigma$. All subgroups of which we take indices contain $A\cap B$. After quotienting out by $A\cap B$, we may assume that we are in the case where $A\cap B=\{0\}$.  Then 
\begin{align*}
 [N:(A\cap B)+C] &= [N:C] \\ &= [N:A+C][A+C:C] \\&= [N:A+C][A:A\cap C] \\&= [N:A+C][A+B:(A\cap C)+B] \\&= [N:A+C][N:B+(A\cap C)]/[N:A+B].
\end{align*}
The first and fourth equalities use the facts that $A \cap B = \{0\}$. For the third equality, notice that $a+C = b+C$ if and only if $a + A\cap C = b+ A \cap C$ for all $a,b \in A$.  The rest follow from standard isomorphism theorems.
%% Since $N_{\tau}=N_{H}\cap N_{\sigma}$, to prove that the two multiplicities are equal, it suffices to prove for saturated lattices $A,B,C$ (and $A\cap B$) with well defined indices:
%% $$[N:A+C][N:B+(A\cap C)]=[N:A+B][N:(A\cap B)+C]$$
%% and apply this equation to $A=N_H,B=N_L,C=N_\sigma$. All lattices of which we take indices contain $A\cap B$. Since $A\cap B$ is saturated, after quotienting out by $A\cap B$, we are in the case where $A\cap B=\{0\}$.  Then 
%% \begin{align*}
%%  [N:(A\cap B)+C] &= [N:C] \\ &= [N:A+C][A+C:C] \\&= [N:A+C][A:A\cap C] \\&= [N:A+C][A+B:(A\cap C)+B] \\&= [N:A+C][N:B+(A\cap C)]/[N:A+B].
%% \end{align*}
%% %For the third equality, notice that $a+C = b+C$ if and only if $a + A\cap C = b+ A \cap C$ for all $a,b \in A$.  
%% The first and fourth equalities use the facts that $A \cap B = \{0\}$ and $A$ and $B$ are saturated.  The rest follow from standard isomorphism theorems.
\end{proof}

\newtheorem*{lem:assocLinear}{Lemma~\ref{lem:assocLinear}}
\begin{lem:assocLinear}
Let $X, L_1,$ and $L_2$ be tropical cycles with positive multiplicities, and suppose that the supports of $L_1$ and $L_2$ are affine linear spaces.  Then
$$
X \stint (L_1 \stint L_2) = (X \stint L_1) \stint L_2.
$$ 
\end{lem:assocLinear}

\begin{proof}
 For each linear space $L$ with multiplicity $1$ and codimension $r$, we can find hyperplanes $H_1,\dots,H_r$ with multiplicities $1$ so that $L = ((H_{\sigma(1)} \stint H_{\sigma(2)})\cdots H_{\sigma(r)})$ for every permutation $\sigma$ of $\{1,\dots,r\}$. To see this choose a lattice basis $B$ of $L \cap N$ and extend this to a lattice basis $B' = B \cup \{v_1,\dots,v_r\}$ of $N$.  For $i = 1,\dots,r$, let $H_i$ be the hyperplane in $N_\RR$ spanned by $B' \backslash \{v_i\}$. Then it is straightforward to see by induction on $i$ that the tropical cycle $((H_{\sigma(1)} \stint H_{\sigma(2)})\cdots H_{\sigma(i)})$ has support $H_{\sigma(1)}\cap \cdots \cap H_{\sigma(i)}$ with multiplicity $1$ for each $i=1,\dots,r$.

Without loss of generality, we may assume that both $L_1$ and $L_2$ have multiplicity $1$ everywhere.  As shown above, we can find hyperplanes $H_1,\dots,H_r$ and $H_1',\dots, H_s'$ so that $L_1 = ((H_{\sigma(1)} \stint H_{\sigma(2)})\cdots H_{\sigma(r)})$ for every permutation $\sigma$ of $\{1,\dots,r\}$ and $L_2 = ((H_1' \stint H_2')\cdots H_s')$.  By Lemma~\ref{lem:iterate} above, we have
\begin{align*}
(X \stint L_1)\stint L_2 & = ((((X \stint H_r)\cdots H_1) \stint H_s')\cdots H_1') \\
 & = X \stint ((((H_1' \stint H_2')\cdots H_s')\stint H_1) \cdots H_r)\\
 & = X \stint (L_2 \stint ((H_r \stint H_{r-1})\cdots H_1))\\
& = X \stint (L_2 \stint L_1)\\& = X \stint (L_1 \stint L_2).
\end{align*}
\end{proof}

 \bibliographystyle{amsalpha}
\bibliography{mybib}

\newcommand{\etalchar}[1]{$^{#1}$}
\def\cprime{$'$}
\providecommand{\bysame}{\leavevmode\hbox to3em{\hrulefill}\thinspace}
\providecommand{\MR}{\relax\ifhmode\unskip\space\fi MR }
% \MRhref is called by the amsart/book/proc definition of \MR.
\providecommand{\MRhref}[2]{%
  \href{http://www.ams.org/mathscinet-getitem?mr=#1}{#2}
}
\providecommand{\href}[2]{#2}
\begin{thebibliography}{RGST05}

\bibitem[AR10]{AllermannRau}
Lars Allermann and Johannes Rau, \emph{First steps in tropical intersection
  theory}, Math. Z. \textbf{264} (2010), no.~3, 633--670. \MR{2591823
  (2011e:14110)}

\bibitem[BH]{BabaeeHuh}
Farhad Babaee and June Huh, \emph{A tropical approach to the strongly positive
  hodge conjecture}, arXiv:1502.00299.

\bibitem[BJS{\etalchar{+}}07]{BJSST}
T.~Bogart, A.~N. Jensen, D.~Speyer, B.~Sturmfels, and R.~R. Thomas,
  \emph{Computing tropical varieties}, J. Symbolic Comput. \textbf{42} (2007),
  no.~1-2, 54--73. \MR{2284285 (2007j:14103)}

\bibitem[CP12]{CartwrightPayne}
Dustin Cartwright and Sam Payne, \emph{Connectivity of tropicalizations}, Math.
  Res. Lett. \textbf{19} (2012), no.~5, 1089--1095. \MR{3039832}

\bibitem[CTY10]{CTY}
Mar{\'{\i}}a~Ang{\'e}lica Cueto, Enrique~A. Tobis, and Josephine Yu, \emph{An
  implicitization challenge for binary factor analysis}, J. Symbolic Comput.
  \textbf{45} (2010), no.~12, 1296--1315. \MR{2733380}

\bibitem[FS97]{FultonSturmfels}
William Fulton and Bernd Sturmfels, \emph{Intersection theory on toric
  varieties}, Topology \textbf{36} (1997), no.~2, 335--353. \MR{1415592
  (97h:14070)}

\bibitem[Ham14]{Hampe}
Simon Hampe, \emph{{\tt a-tint}: a polymake extension for algorithmic tropical
  intersection theory}, European J. Combin. \textbf{36} (2014), 579--607.
  \MR{3131916}

\bibitem[Jen]{gfan}
Anders~N. Jensen, \emph{{G}fan, a software system for {G}r{\"o}bner fans and
  tropical varieties}, Available at
  \url{http://home.imf.au.dk/jensen/software/gfan/gfan.html}.

\bibitem[JY13]{tropRes}
Anders Jensen and Josephine Yu, \emph{Computing tropical resultants}, J.
  Algebra \textbf{387} (2013), 287--319. \MR{3056698}

\bibitem[Kat12]{KatzIntersection}
Eric Katz, \emph{Tropical intersection theory from toric varieties}, Collect.
  Math. \textbf{63} (2012), no.~1, 29--44. \MR{2887109}

\bibitem[Kaz03]{Kazarnovskii}
B.~Ya. Kazarnovski{\u\i}, \emph{c-fans and {N}ewton polyhedra of algebraic
  varieties}, Izv. Ross. Akad. Nauk Ser. Mat. \textbf{67} (2003), no.~3,
  23--44. \MR{1992192 (2005a:14072)}

\bibitem[McM89]{polytopeAlgebra}
Peter McMullen, \emph{The polytope algebra}, Adv. Math. \textbf{78} (1989),
  no.~1, 76--130. \MR{1021549 (91a:52017)}

\bibitem[McM93]{McMullen_Simple}
\bysame, \emph{On simple polytopes}, Invent. Math. \textbf{113} (1993), no.~2,
  419--444. \MR{1228132 (94d:52015)}

\bibitem[Mik06]{Mik}
Grigory Mikhalkin, \emph{Tropical geometry and its applications}, Proceedings
  of the International Congress of Mathematicians, Vol. II (Madrid, 2006)
  (Z\"{u}rich, Switzerland), European Mathematical Society, 2006, pp.~827--852.

\bibitem[MS15]{MaclaganSturmfels}
Diane Maclagan and Bernd Sturmfels, \emph{Introduction to {T}ropical
  {G}eometry}, Graduate Studies in Mathematics, vol. 161, American Mathematical
  Society, Providence, RI, 2015.

\bibitem[OP13]{OssermanPayne}
Brian Osserman and Sam Payne, \emph{Lifting tropical intersections}, Doc. Math.
  \textbf{18} (2013), 121--175. \MR{3064984}

\bibitem[Rau]{Rau}
Johannes Rau, \emph{Intersections on tropical moduli spaces}, arXiv:0812.3678.

\bibitem[RGST05]{RGST}
J{\"u}rgen Richter-Gebert, Bernd Sturmfels, and Thorsten Theobald, \emph{First
  steps in tropical geometry}, Idempotent mathematics and mathematical physics,
  Contemp. Math., vol. 377, Amer. Math. Soc., Providence, RI, 2005,
  pp.~289--317. \MR{2149011 (2006d:14073)}

\bibitem[ST08]{SturmfelsTevelev}
Bernd Sturmfels and Jenia Tevelev, \emph{Elimination theory for tropical
  varieties}, Math. Res. Lett. \textbf{15} (2008), no.~3, 543--562. \MR{2407231
  (2009f:14124)}

\bibitem[Stu02]{Sturmfels02}
Bernd Sturmfels, \emph{Solving systems of polynomial equations}, CBMS Regional
  Conference Series in Mathematics, vol.~97, Published for the Conference Board
  of the Mathematical Sciences, Washington, DC, 2002. \MR{1925796
  (2003i:13037)}

\bibitem[Tve74]{Tverberg}
Helge Tverberg, \emph{How to cut a convex polytope into simplices}, Geometriae
  Dedicata \textbf{3} (1974), 239--240. \MR{0348630 (50 \#1127)}

\bibitem[Yu]{Yu}
Josephine Yu, \emph{Algebraic matroids and realizability of tropical varieties
  up to scaling}, arXiv:1506.01427.

\bibitem[Zie95]{Zie}
G{\"u}nter~M. Ziegler, \emph{Lectures on polytopes}, Graduate Texts in
  Mathematics, vol. 152, Springer-Verlag, New York, 1995. \MR{1311028
  (96a:52011)}

\end{thebibliography}

 \end{document}